\newtheorem{prop}[equation]{Proposition}
\newtheorem{thm}[equation]{Theorem}
\newtheorem{cor}[equation]{Corollary}
\newtheorem{lem}[equation]{Lemma}
\theoremstyle{definition}
\newtheorem{rem}[equation]{Remark}
\newtheorem{exa}[equation]{Example}
\newtheorem{exas}[equation]{Examples}
\numberwithin{equation}{section}
\newcommand{\Hom}{\operatorname{Hom}}
\newcommand{\letbe}{\mathbin{\raisebox{.3pt}{:}\!=}}
\newcommand{\hocolim}{\operatorname{hocolim}}
\newcommand{\colim}{\operatorname{colim}}
\newcommand{\holim}{\operatorname{holim}}
\newcommand{\bA}{\mathbb{A}}
\newcommand{\bC}{\mathbb{C}}
\newcommand{\bQ}{\mathbb{Q}}
\newcommand{\bR}{\mathbb{R}}
\newcommand{\bZ}{\mathbb{Z}}
\newcommand{\srf}[1]{\mbox{${\text{\it SR}^F}$}}
\newcommand{\djs}{\mbox{\it DJ\/}}
\newcommand{\cat}[1]{\mbox{\sc #1}}
\newcommand\Z{\bZ}
\newcommand\z{\Z}
\newcommand\q{\bQ}
\newcommand\Q{\bQ}
\newcommand\C{\bC}
\newcommand\R{\bR}
\newcommand \lra{\longrightarrow}
\newcommand \p{^\wedge_p}
\newcommand \com{^\wedge}
\newcommand \ra{\rightarrow}
\newcommand \da{\downarrow}
\def \larrow#1{\,\stackrel{#1}\lra\,}
\def \link{\operatorname{\ell}}
\newcommand \af{{\bA_f}}
\newcommand{\const}{\operatorname{cst}}
\newcommand{\catk}{\cat{cat}(K)}
\newcommand{\catkop}{\cat{cat}(K)^{op}}
\newcommand{\ab}{\cat{ab}}
\newcommand{\Top}{\cat{Top}}
\newcommand{\nz}{\newline}
\newcommand{\quer}{\overline}
\begin{document}
\bibliographystyle{plain}

\title{Vector
bundles over
Davis-Januszkiewicz  spaces with prescribed characteristic classes}

\author{Dietrich Notbohm}
\address{
Department of Mathematics,
Vrije Universiteit,
Faculty of Sciences,
De Boolelaan 1081a,
1081 HV Amsterdam,
The Netherlands
}
\email{notbohm@few.vu.nl}

\keywords
{Davis-Januszkiewicz space, vector bundle, characteristic classes,
coloring, simplicial complex, complex structure}

\subjclass{55R10, 57R22, 05C15}

\begin{abstract}

For any $(n-1)$-dimensional simplicial complex, we construct a particular
$n$-dimensional complex vector bundle over
the associated Davis-Januszkiewicz
space whose Chern classes are given by the elementary symmetric polynomials
in the generators of the Stanley Reisner algebra. We show that the isomorphism type
of this complex vector bundle as well as of its realification are
completely determined by its characteristic classes. This allows us to show that coloring properties of the
simplicial complex are reflected by splitting properties of this bundle and vice versa.
Similar question are also discussed for
$2n$-dimensional real vector bundles with particular prescribed characteristic Pontrjagin and Euler classes.
We also analyze which of these bundles admit a complex structure.
It turns out that all these bundles are closely related to the tangent bundles of
quasitoric manifolds and moment angle complexes.
\end{abstract}

\maketitle

\markright{VECTOR BUNDLES} 

\section{Introduction}\label{introduction}

For any finite  simplicial complex $K$, Davis and Januszkiewicz constructed a family
of realizations of the Stanley-Reisner algebra $\Z[K]$, that is the integral cohomology of these
spaces is isomorphic to $\Z[K]$. They also showed that all these spaces are homotopy equivalent
\cite[Section 4]{daja}. We denote a generic
model for this homotopy type  by $\djs(K)$. In Section 6 of the mentioned paper, Davis and Januszkiewicz also
studied particular vector bundles over DJ(K).  They
constructed an $m$-dimensional complex bundle $\lambda\da DJ(K)$ whose Chern classes
are given by the elementary symmetric polynomials in the generators of the Stanley Reisner algebra
$\Z[K]$ and compared it with vector bundles obtained from the tangent bundles of
the moment angle complex or a quasitoric manifold by an application of the Borel construction.
In particular, they showed that these bundles are stably isomorphic and have the same Pontrjagin classes.
We were wondering to which extend the characteristic classes determine the isomorphism type of these bundles.

We will split off a large trivial vector bundle from $\lambda$ and will show that
the isomorphism type of the remaining vector bundle $\xi\da \djs(K)$ as well as of its
realification $\xi_\R$ is completely determined
by its characteristic classes. We will also study real vector bundles $\rho$ with the same Pontrjagin classes
as $\xi$ and prove for them similar existence and uniqueness results in terms Pontrjagin and Euler classes.
We are able to offer several applications of the uniqueness results:
Colorings of $K$ are reflected by  (stable) splittings of $\xi$ and $\xi_\R$ into a direct sum
of complex line bundles or of $2$-dimensional real bundles. We will also improve on the stable
isomorphisms results mentioned above and classify complex structures  on the real vector bundles $\rho$.

To make our statements more precise we need to fix notation and
to recall some basic constructions.
An \emph{abstract simplicial complex} on the finite vertex set $V$ of order
$|V|=m$ is a
set $K=\{\alpha_1,....,\alpha_s\}$ of subsets $\alpha_i\subset V$
which is closed under the formation of subsets and which contains the empty set $\emptyset$.
We will always identify $V$ with the set $[m]\letbe\{1,...,m\}$ of the first $m$ natural numbers.
The \emph{dimension} $\dim K$ of $K$ is expressed in terms of the cardinality of its faces $\alpha\in K$.
We set $\dim \alpha\letbe |\alpha|-1$ and $\dim K$ is the maximum of the dimensions of its faces.
Some of our statements involve sums taken over all maximal faces. We will denote this set by $M_K$.
Examples are given by full simplices and their boundaries.
For a set $\alpha$ we denote by $\Delta[\alpha]$ the simplicial complex given by
all subsets of $\alpha$ and by
$\partial\Delta[\alpha]$ the complex of all proper subsets of $\alpha$.
Then $\dim \Delta[\alpha]=|\alpha|-1$ and $\dim \partial\Delta[\alpha]=|\alpha|-2$.
The set $M_{\Delta[\alpha]}$
consist only of the set $\alpha$ and $M_{\partial\Delta[\alpha]}$
consists of all subsets of $\alpha$ of order $|\alpha|-1$.
The complex $K$ is a subcomplex of the full simplex $\Delta[m]$ on $m$-vertices and for each $\alpha\in K$,
the complex $\Delta[\alpha]\subset K$ is a subcomplex of $K$.

For a commutative ring $R$ with unit we denote by $R[m]\letbe R[v_1,...,v_m]$
the
graded polynomial algebra generated by algebraically independent
elements $v_1,...,v_m$ of degree 2, one for each vertex of $K$.
For each subset $\alpha\subseteq [m]$ we denote by
$v_\alpha\letbe \prod_{j\in \alpha} v_j$ the square free monomial whose factors
are in one to one correspondence with vertices contained in $\alpha$.
The \emph{graded
Stanley-Reisner algebra} $R[K]$ associated with $K$
is defined as the quotient $R[K]\letbe R[m]/I_K$,
where  $I_K\subset R[m]$ is the ideal generated
by all elements $v_\beta$ such that $\beta\subseteq [m]$ is not a face of $K$.

For a complex vector bundle $\eta$ over a space $X$ we denote by $c_i(\eta)\in H^{2i}(X;\Z)$
the $i$-th Chern class and by
$c(\eta)\letbe 1+\Sigma_{i\geq 1} c_i(\eta)$ the total Chern class of $\eta$.
For a real vector bundle $\rho$ over $X$ we use
$p_i(\rho)\in H^{4i}(X;\Z)$  and
$p(\rho)\letbe 1 + \Sigma_{i\geq 1} p_i(\rho)$ to denote the $i$-th  and
the total Pontrjagin class of $\rho$.
If $\rho$ is $k$-dimensional and oriented, the Euler class is denoted by $e(\rho)\in H^k(X;\Z)$.

We fix an isomorphism $H^*(\djs(K);\z)\cong \z[K]$. Since $BT^m$ is an Eilenberg-MacLane
space realizing the algebra $\Z[m]$, there exist a map $q_K\colon \djs(K)\lra BT^m$
which induces in cohomology
the projection $\z[m]\lra \z[K]$.  We can think of $T^m$ as the maximal torus $T^m\subset U(m)$.
The pull back of the $m$-dimensional universal complex vector bundle over $BU(m)$ along
the composition of $q_K$ with the maximal torus inclusion $BT^m \lra BU(m)$
produces a vector bundle $\lambda\da \djs(K)$ whose total Chern class is
$c(\lambda)= \prod_{i=1}^m (1+v_i)$. And the total Pontrjagin class of the realification $\lambda_\R$ is
$p(\lambda_\R)=\prod_{i=1}^m (1-v_i^2)$ \cite[Section 6]{daja}. These characteristic classes play a
particular role
in our statements and we define $c(K)\letbe\prod_{i=1}^m(1+v_i)$ and
$p(K)\letbe \prod_{i=1}^m (1-v_i^2)$.

Confusing notation we also denote by $\C$ and $\R$
 1-dimensional trivial
complex or real  vector  bundles over a space $X$.

\begin{thm} \label{thm1}
Let $K$ be an $(n-1)$-dimensional  finite simplicial complex.
Then there exists an $n$-dimensional
complex vector bundle $\xi$ over $\djs(K)$ such that
$\lambda\cong \xi\oplus \C^{m-n}$. In particular,
$c(\xi)=c(K)$ and
$p(\xi_\R)=p(K)$.
Moreover, $c_n(\xi)$ and $p_n(\xi_\R)$ are non trivial.
\end{thm}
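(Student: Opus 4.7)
The plan is to produce the bundle $\xi$ by lifting the classifying map
$\lambda : \djs(K) \to BU(m)$
along the block inclusion $i: BU(n)\hookrightarrow BU(m)$, $A\mapsto A\oplus I_{m-n}$, and then setting $\xi\letbe f^{*}\gamma_{n}$ where $f:\djs(K)\to BU(n)$ is the lift and $\gamma_{n}$ is the universal $n$-plane bundle. Once this lift is in hand, the relation $\lambda\cong \xi\oplus \C^{m-n}$ is automatic from $i$, the Whitney sum formula gives $c(\xi)=c(\lambda)=c(K)$ and $p(\xi_\R)=p(\lambda_\R)=p(K)$, so only the existence of $f$ and the non-triviality of the top classes remain.

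The first step is the key numerical observation. Since $\dim K=n-1$, any subset $\alpha\subseteq [m]$ with $|\alpha|>n$ fails to be a face of $K$, so $v_\alpha=0$ in $\z[K]$. Thus for every $k>n$ the elementary symmetric polynomial
\[
 c_{k}(\lambda)=e_{k}(v_{1},\dots,v_{m})=\sum_{|\alpha|=k} v_\alpha=0\qquad\text{in } H^{*}(\djs(K);\z).
\]
Now apply obstruction theory to the fibration
$V_{m-n}(\C^{m})\lra BU(n)\stackrel{i}{\lra} BU(m).$
The Stiefel fibre $V_{m-n}(\C^{m})=U(m)/U(n)$ is $2n$-connected, its integral cohomology is exterior on classes $x_{2n+1},x_{2n+3},\dots,x_{2m-1}$, and these transgress in the Serre spectral sequence to $c_{n+1},c_{n+2},\dots,c_{m}\in H^{*}(BU(m))$. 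Inductively working up the Postnikov tower of this fibration over $BU(m)$, every $k$-invariant, and hence every obstruction to a lift of a map $X\to BU(m)$, is a polynomial in the universal Chern classes $c_{n+1},\dots,c_{m}$. Pulling back to $\djs(K)$ along $\lambda$, every such obstruction is a polynomial in the classes $c_{k}(\lambda)$ for $k>n$, all of which vanish by the computation above. Hence all obstructions to a section are zero and $f:\djs(K)\to BU(n)$ with $i\circ f\simeq \lambda$ exists.

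I expect the main technical obstacle to be the verification that all higher obstructions, not only the primary one $c_{n+1}(\lambda)$, lie in the ideal generated by $c_{n+1},\dots,c_{m}$; a cleaner alternative is to model $\djs(K)$ as $\hocolim_{\sigma\in K}BT^{\sigma}$ and construct $f$ by prescribing $BT^{\sigma}\to BU(n)$ to be the composition $T^{\sigma}\hookrightarrow T^{n}\hookrightarrow U(n)$ (coordinates padded by $1$'s), then checking that the resulting face-by-face diagram is coherent, since the two maps $BT^{\tau}\rightrightarrows BU(n)$ arising from $\tau\subset\sigma$ differ only by conjugation by a permutation in $U(n)$ and thus become equal after passage to $BU(n)$.

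Finally, for the non-triviality of the top classes, recall that $\z[K]$ has, as a free abelian group, a basis consisting of all monomials $v_{1}^{a_{1}}\cdots v_{m}^{a_{m}}$ whose support $\{i:a_{i}>0\}$ is a face of $K$. Because $\dim K=n-1$, the complex $K$ contains at least one face $\alpha_{0}$ with $|\alpha_{0}|=n$, so
\[
 c_{n}(\xi)=e_{n}(v_{1},\dots,v_{m})=\sum_{\alpha\in K,\,|\alpha|=n}v_\alpha
\]
is a sum of distinct square-free basis monomials, hence non-zero; applying the same argument to the identity $p_{n}(\xi_\R)=\pm\,e_{n}(v_{1}^{2},\dots,v_{m}^{2})=\pm\sum_{\alpha\in K,\,|\alpha|=n}v_\alpha^{2}$ (Chern roots of $\xi\oplus\bar\xi$) shows $p_{n}(\xi_\R)\neq 0$ as well.
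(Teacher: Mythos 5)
Your proposal sketches two routes, neither of which is the paper's own argument, and each has an essential gap.

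The paper's proof of Theorem \ref{thm1} (Section \ref{xi}) is a direct geometric construction on the moment angle complex: one works with the $T^m$-equivariant trivial bundle $\lambda'=\C^m\times Z_K$ over $Z_K$, picks a $K$-admissible $(m-n)\times m$ matrix $A$ and defines an equivariant bundle map $f_A(x,z)=(A(x\bar z),z)$ to $\C^{m-n}\times Z_K$; $K$-admissibility together with the fact that every $z\in Z_K$ has $z_i\neq 0$ outside some face $\alpha$ makes $f_A$ a fibrewise epimorphism, so its kernel $\xi'$ is a $T^m$-equivariant $n$-plane bundle, a Segal-type splitting gives $\lambda'\cong\xi'\oplus\C^{m-n}$ equivariantly, and applying the Borel construction yields $\xi=\xi'_{hT^m}$. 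This argument does not use obstruction theory at all and produces the splitting on the nose.

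Your first approach (lifting $\lambda$ through $BU(n)\to BU(m)$) stalls at exactly the point you flag. The assertion that all obstructions to lifting are polynomials in $c_{n+1},\dots,c_m$ is not justified and is false in general: the fibre $V_{m-n}(\C^m)$ has torsion in its higher homotopy groups, and the higher obstruction groups $H^{2j}(\djs(K);\pi_{2j-1}V_{m-n}(\C^m))$ take coefficients in those torsion groups, so the corresponding obstruction classes need not be expressible as integral polynomials in Chern classes at all; the transgression statement you invoke is a rational or associated-graded statement and does not control the integral $k$-invariants. Your second, ``cleaner'' approach is much closer in spirit to the paper's alternative, homotopy-theoretic construction, but the sentence ``then checking that the resulting face-by-face diagram is coherent'' is precisely where the work lies. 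Agreement of the restrictions $BT^\tau\rightrightarrows BU(n)$ up to homotopy gives only a diagram in the homotopy category; to descend to an actual map out of $\hocolim_{\catk}BT^K$ one must kill the obstructions living in higher derived limits of the functors $\alpha\mapsto\pi_i\bigl(\operatorname{map}(BT^\alpha,BU(n)\p)\bigr)$. The paper does this via the Bousfield--Kan/Wojtkowiak obstruction theory and a careful calculation of $\lim^j$ of these functors (Sections \ref{higher limits} and \ref{p-adic case}), together with an arithmetic-square argument to go from the $p$-adic realizations to an integral one (Section \ref{global uniqueness}). Without those computations the second approach is not a proof. Your argument for the non-triviality of $c_n(\xi)$ and $p_n(\xi_\R)$ is correct and coincides with the paper's.
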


The vector bundle $\xi\da DJ(K)$ also  satisfies uniqueness properties. Its isomorphism type
is completely determined by its characteristic classes. The realification $\xi_{\R}$
of a complex bundle $\xi$ carries a
canonical orientation. And $\xi_\R$ denotes the underlying oriented real vector bundle with
this canonical orientation.
The Euler class $e(\xi_{\R})$ is then given by the $n$-the Chern class
$c_n(\xi)$. Trivial real vector bundles get the standard orientation.

\begin{thm} \label{thm2}
Let $K$ be an $(n-1)$-dimensional  finite simplicial complex. Let $\eta\da DJ(K)$ be
an $r$-dimensional complex and $\rho\da DJ(K)$ an oriented $s$-dimensional real vector bundle
over $DJ(K)$.
\nz
(i) If $c(\eta)=c(\xi)$, then $r\geq n$ and $\eta$ and $\xi\oplus \C^{r-n}$
are isomorphic.
\nz
(ii) If $p(\rho)=p(\xi_{\R})$, then $s\geq 2n$. If $s>2n$, then
$\rho$ and $\xi_{\R}\oplus \R^{s-2n}$ are isomorphic.
And if $s=2n$,
then $\rho\oplus \R$ and $\xi_{\R}\oplus \R$ are isomorphic.
\nz
(iii) If $s=n$, $p(\rho)=p(\xi_{\R})$ and $e(\rho)=e(\xi_\R)$, then $\rho$ and $\xi_\R$ are isomorphic.
\end{thm}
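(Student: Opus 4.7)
My plan is to obtain all three assertions by combining a rank lower bound extracted directly from Theorem~\ref{thm1} with a stable isomorphism in $K$-theory, followed by a destabilization argument tailored to the combinatorial structure of $\djs(K)$. For the rank inequalities I would observe that $c_k$ of an $r$-dimensional complex bundle vanishes for $k>r$, so $c(\eta)=c(\xi)$ together with the non-triviality of $c_n(\xi)$ provided by Theorem~\ref{thm1} forces $r\ge n$ in (i). The same observation using $p_n(\xi_\R)\ne 0$ yields $s\ge 2n$ in (ii), and the Euler class comparison in (iii) is only dimensionally meaningful at $s=2n$.

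Next, I would exploit the particularly clean $K$-theory of $\djs(K)$ to obtain stable isomorphism. Since $H^*(\djs(K);\Z)\cong\Z[K]$ is torsion-free and concentrated in even degrees, the Atiyah--Hirzebruch spectral sequence computing complex topological $K$-theory collapses at $E_2$, so $\tilde K^0(\djs(K))$ is torsion-free and detected injectively by the Chern character (equivalently, by the collection of Chern classes). Hence the equality of Chern classes forces $[\eta]-[\xi]-(r-n)[\C]=0$ in $\tilde K^0(\djs(K))$, giving a stable isomorphism of complex bundles. The real version in (ii) is handled by the parallel argument with $\mathit{KO}$-theory.

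The main obstacle is then to promote the stable equivalence to an honest isomorphism on $\djs(K)$, which is not finite dimensional, so standard cancellation theorems do not apply. My plan is to use the homotopy colimit presentation $\djs(K)\simeq\hocolim_{\sigma\in K}BT^\sigma$, which reduces the classification of bundles on $\djs(K)$ to coherent systems of representations of the face tori $T^\sigma$. Each such representation of a compact abelian group splits uniquely as a direct sum of characters, so the Chern roots $\{v_i:i\in\sigma\}$ prescribed by $c(\xi)=\prod_i(1+v_i)$ together with $r-|\sigma|$ trivial line summands recover $\eta|_{BT^\sigma}\cong\xi|_{BT^\sigma}\oplus\C^{r-n}$ uniquely. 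Because $|\sigma|\le n\le r$ there is always enough room to absorb the excess into trivial summands, so the relevant coherence obstructions in the associated Bousfield--Kan spectral sequence vanish, yielding $\eta\cong\xi\oplus\C^{r-n}$. For (iii) the same scheme applies to $BSO(2n)$, and the potential discrepancy between two oriented real bundles with equal Pontrjagin classes is governed by a single obstruction in $H^{2n}(\djs(K);\Z)$ that, up to sign, equals $e(\rho)-e(\xi_\R)$; imposing this difference to vanish produces the desired isomorphism.
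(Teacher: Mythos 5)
Your rank inequalities in (i) and (ii), deduced from the non-vanishing of $c_n(\xi)$ and $p_n(\xi_\R)$, match the paper exactly, and your instinct to replace the comparison of bundles by the comparison of maps to $BU(n)$ (resp. $BSO(2n)$) restricted along the $\hocolim_{\catk}BT^K$ presentation is also the right one. But there are two substantive problems with the rest of the plan.

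First, the $K$-theory step is a detour that does not actually buy you anything. Even if you could make the stable comparison watertight (and for the infinite complex $DJ(K)$ you would have to address $\lim^1$ and convergence issues in the AHSS, and the gap between the Grothendieck group of genuine bundles and $[X,BU\times\Z]$), the resulting stable isomorphism is not an input to the destabilization: the real work all happens in the obstruction-theoretic step. The paper never uses $K$-theory or a stable equivalence; it produces the isomorphism directly from obstruction theory, so the stable step is redundant.

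Second, and more seriously, the destabilization step is asserted, not proved. You write that ``the relevant coherence obstructions in the associated Bousfield--Kan spectral sequence vanish'' because $|\sigma|\le n\le r$ gives enough room. This is precisely where the paper's technical content lives: the obstruction groups are the higher derived limits $\lim^j\Pi_i^{G_r}$ over $\catkop$, and showing they vanish requires the filtration-by-cardinality of functors and the calculation of higher limits of atomic functors via $\widetilde H^{*}(\link_K(\alpha);-)$ (Lemma~\ref{atomic}), combined with a case-by-case analysis of $\Pi_i^{U(r)}$, $\hat\Pi_2^{SO_r}$, etc., including special care at the prime $2$ and at $i=2$ where $\pi_2(BSO(k))=\Z/2$ enters (Lemma~\ref{necessary} and Proposition~\ref{vhl}). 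None of that is subsumed by ``there is enough room.'' Moreover, the description of $\map(BT^\alpha,BG_r)$ via centralizers of subtori, which makes the Bousfield--Kan obstruction theory tractable, is a statement about the $p$-completed mapping space; so the argument must be run prime by prime and then assembled with Sullivan's arithmetic square and a separate check (Theorem~\ref{mono}) that $[DJ(K),BG_r]\to[DJ(K),BG_r^\wedge]$ is injective. Your plan omits both the completion step and the gluing. Finally, in (iii) the claim that the only discrepancy between oriented real bundles with equal Pontrjagin classes is $e(\rho)-e(\xi_\R)\in H^{2n}$ is not justified by your outline; in the paper this falls out of the same vanishing of higher limits rather than a single-obstruction argument.
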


For clarification, the isomorphisms in Part (ii) and (iii) are isomorphisms of oriented bundles.

In Section 4 we will allow more general Euler classes. A function $\omega\colon M_K\lra \{\pm 1\}$
gives rise to class $e_\omega(K)\letbe\sum_{\mu\in M_K} \omega(\mu) v_\mu \in H^{2n}(DJ(K);\Z)$.
For each function $\omega$ we will construct an oriented real vector bundle $\rho_\omega$ over $DJ(K)$
with $e(\rho_\omega)=e_\omega(K)$ and $p(\rho_\omega)=p(K)$ (Theorem \ref{globalexistence}).
The bundles $\rho_\omega$ also satisfy uniqueness results in terms of Euler and
Pontrjagin classes  (Theorem \ref{globaluniqueness}) and
provide a complete list of isomorphism types of  (oriented) $2n$-dimensional real vector bundles
whose total Pontrjagin class equals $\prod_{i\in [m]}(1-v_i^2)$

\begin{thm} \label{thm3}
Let $K$ be an $n-1$-dimensional finite simplicial complex.
Let $\rho\da DJ(K)$
be a oriented $2n$-dimensional real vector bundle
such that $p(\rho)= \prod_{i\in [m]}(1-v_i^2)$. Then there exists a function
$\omega\colon M_K\lra \{\pm 1\}$ such that $\rho$ and $\rho_\omega$ are isomorphic.
\end{thm}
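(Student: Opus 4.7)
The plan is to show that $e(\rho) = e_\omega(K)$ for some $\omega : M_K \to \{\pm 1\}$, after which Theorem~\ref{globaluniqueness} (the uniqueness result for the bundles $\rho_\omega$ from Section~4) finishes the proof.

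For each top-dimensional maximal face $\mu \in M_K$, the inclusion $\Delta[\mu] \hookrightarrow K$ induces $BT^\mu \cong DJ(\Delta[\mu]) \hookrightarrow DJ(K)$, and $\rho|_{BT^\mu}$ is an oriented rank-$2n$ real bundle over $BT^n$ with $p(\rho|_{BT^\mu}) = \prod_{i \in \mu}(1 - v_i^2)$. The universal identity relating $e^2$ and $p_n$ for an oriented rank-$2n$ real bundle, with the sign fixed by the known case of $\xi_\R$ (where $e(\xi_\R|_{BT^\mu}) = v_\mu$, hence $e(\xi_\R|_{BT^\mu})^2 = v_\mu^2$), gives
\begin{equation*}
e(\rho|_{BT^\mu})^2 \;=\; v_\mu^2 \quad\text{in}\quad \Z[v_i : i \in \mu] = H^*(BT^\mu;\Z).
\end{equation*}
Since this polynomial ring is a UFD, the square roots of $v_\mu^2$ are precisely $\pm v_\mu$; hence $e(\rho|_{BT^\mu}) = \omega(\mu) v_\mu$ for a uniquely determined $\omega(\mu) \in \{\pm 1\}$, which defines the function $\omega$.

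To globalize, I invoke the injectivity of
\begin{equation*}
\Z[K]^{2n} \;\lra\; \prod_{\mu \in M_K,\; |\mu|=n}\, \Z[v_i : i \in \mu]^{2n}.
\end{equation*}
This holds because the standard $\Z$-basis of $\Z[K]^{2n}$ consists of degree-$2n$ monomials with support a face of $K$; each such face lies in some top-dimensional maximal face (for pure $K$; in the non-pure case one additionally restricts along $BT^\sigma \hookrightarrow DJ(K)$ for each lower-dimensional maximal face $\sigma \in M_K$, where $p_n(\rho|_{BT^\sigma}) = 0$ forces $e(\rho|_{BT^\sigma}) = 0$ and kills the residual degrees of freedom), and distinct monomials in each polynomial ring $\Z[v_i : i \in \mu]$ are $\Z$-linearly independent. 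Both $e(\rho)$ and $e_\omega(K) = \sum_\mu \omega(\mu) v_\mu$ restrict on each $BT^\mu$ to $\omega(\mu) v_\mu$ (the summands $v_{\mu'}$ for $\mu' \ne \mu$ of size $n$ satisfy $\mu' \not\subseteq \mu$, so they vanish under restriction), forcing the desired identity $e(\rho) = e_\omega(K)$.

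Combining $e(\rho) = e(\rho_\omega)$ with the hypothesis $p(\rho) = p(K) = p(\rho_\omega)$, Theorem~\ref{globaluniqueness} now produces the oriented isomorphism $\rho \cong \rho_\omega$. The principal work lies in the local step: integrality of the Euler class combined with unique factorization in $\Z[v_i : i \in \mu]$ rules out any contribution from non-square-free monomials, pinning $e(\rho|_{BT^\mu})$ down to $\pm v_\mu$. Once this is in hand, the global assembly via injectivity and the application of the prior uniqueness result from Section~4 are essentially formal.
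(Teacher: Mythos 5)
Your proof is correct and takes essentially the same approach as the paper's. The paper packages the square-root analysis as Lemma~\ref{eulerclass} (working directly in $\Z[K]$ via a coefficient comparison of $e^2$ with $\sum_\mu v_\mu^2$, justified by the monomorphism of Remark~\ref{limitzk}), whereas you restrict to each $BT^\mu$ and invoke unique factorization in $\Z[v_i : i\in\mu]$; both arguments rest on the identity $e^2 = (-1)^n p_n$, the embedding $\Z[K] \hookrightarrow \prod_\alpha \Z[\alpha]$, and an appeal to the uniqueness theorem to convert equality of characteristic classes into an isomorphism of bundles.
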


The realification $\xi_\R$ satisfies the assumption of the last theorem,
realizes the function $\omega \equiv 1$, i.e. $\omega(\mu)=1$ for all $\mu\in M_K$,
and admits obviously a complex structure.
We say that a real vector bundle $\rho$ {\it admits a
complex structure} if there exists a complex bundle $\eta$ such that $\rho$ and $\eta_\R$ are isomorphic
as non-oriented real vector bundles. For a function $f\colon [m]\lra \{\pm 1\}$ we define a
function $\omega_f\colon M_K\lra \{\pm 1\}$ by $\omega_f(\mu)\letbe \prod_{i\in \mu} f(i)$
and denote $e_{\omega_f}(K)$ by $e_f(K)$.
.

\begin{thm} \label{thm4}
Let $K$ be an $n-1$-dimensional finite simplicial complex. Let $\rho$ be an $2n$-dimensional
oriented real vector bundle over $DJ(K)$ such that $p(\rho)=p(K)$.
Then, the bundle $\rho$ admits a complex structure if and only if
there exists a function $f\colon [m]\lra \{\pm 1\}$ such that $e(\rho)=\pm e_f(K)$.
\end{thm}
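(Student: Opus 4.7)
The plan is to prove the two implications separately, using Theorem~\ref{globaluniqueness} to identify oriented bundles with prescribed Pontrjagin and Euler classes, and unique factorization in a polynomial ring to extract the necessary sign data.

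For the \emph{if} direction, I would associate to each $f\colon[m]\to\{\pm 1\}$ an $n$-dimensional complex bundle $\xi_f$ whose realification realises the prescribed invariants. Let $L_i$ be the line bundle over $DJ(K)$ classified by $v_i$, and form $\lambda_f\letbe\bigoplus_{i=1}^m L_i^{f(i)}$, where $L_i^{+1}\letbe L_i$ and $L_i^{-1}\letbe\overline{L_i}$. Conjugation does not affect the underlying real bundle, so $p((\lambda_f)_\R)=p(\lambda_\R)=p(K)$. As in Theorem~\ref{thm1}, split off a trivial summand to obtain $\xi_f$ of complex dimension $n$ with $\lambda_f\cong\xi_f\oplus\C^{m-n}$. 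Expanding $c(\lambda_f)=\prod_i(1+f(i)v_i)$ and using $v_\nu=0$ for $\nu\notin K$ gives $c_n(\xi_f)=e_f(K)$, hence $e((\xi_f)_\R)=e_f(K)$ with the canonical complex orientation. If $e(\rho)=e_f(K)$, Theorem~\ref{globaluniqueness} identifies $\rho\cong(\xi_f)_\R$ as oriented bundles; if $e(\rho)=-e_f(K)$, the same uniqueness applied to the oppositely oriented bundle gives $\rho\op\cong(\xi_f)_\R$. In either case the underlying real bundle of $\rho$ inherits the complex structure of $\xi_f$.

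For the \emph{only if} direction, assume $\rho\cong\eta_\R$ as unoriented real bundles for some $n$-dimensional complex bundle $\eta$. The identity $c(\eta)\cdot\overline{c(\eta)}=p(\eta_\R)=\prod_{i=1}^m(1-v_i^2)$ holds in $\Z[K]$. For each maximal face $\mu\in M_K$, the inclusion $\Delta[\mu]\subset K$ induces $DJ(\Delta[\mu])\hookrightarrow DJ(K)$ with cohomology map $\Z[K]\to\Z[v_i:i\in\mu]$ sending $v_j\mapsto 0$ off $\mu$. Restricting the identity to the polynomial UFD $\Z[v_i:i\in\mu]$ yields
\[
c(\eta)|_\mu\cdot\overline{c(\eta)|_\mu}\;=\;\prod_{i\in\mu}(1-v_i)(1+v_i),
\]
and since each $1\pm v_i$ is irreducible and $c(\eta)|_\mu$ has constant term $1$, unique factorization forces $c(\eta)|_\mu=\prod_{i\in\mu}(1+g_\mu(i)v_i)$ for some $g_\mu\colon\mu\to\{\pm 1\}$. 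Restricting further along $DJ(\Delta[\mu\cap\mu'])\hookrightarrow DJ(\Delta[\mu])$ and comparing linear coefficients gives $g_\mu(i)=g_{\mu'}(i)$ for $i\in\mu\cap\mu'$, so the $g_\mu$ patch to a global $f\colon[m]\to\{\pm 1\}$ (every vertex lies in some maximal face). Reading off top components yields $c_n(\eta)|_\mu=\omega_f(\mu)v_\mu$ with all non-square-free degree-$n$ monomials killed; since every degree-$n$ monomial of $\Z[K]$ is supported on some maximal face, $c_n(\eta)=e_f(K)$, and $e(\rho)=\pm c_n(\eta)=\pm e_f(K)$ according to whether $\eta_\R$ and $\rho$ agree as oriented bundles or differ by a sign.

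The main obstacle is the unique factorization step inside $\Z[v_i:i\in\mu]$: I need to show that any $c$ with constant term $1$ and $c(v)\,c(-v)=\prod_{i\in\mu}(1-v_i^2)$ has the form $\prod_{i\in\mu}(1+g(i)v_i)$. Writing $c=\prod_i(1+v_i)^{a_i^+}(1-v_i)^{a_i^-}$ by UFD (the only unit with constant term $1$ is $1$) and matching the multiplicity of $(1-v_i^2)$ on both sides forces $a_i^++a_i^-=1$, so each factor $1\pm v_i$ appears exactly once. The remaining steps---gluing the local signs on overlaps and tracking orientations to pin down the final $\pm$---are then routine.
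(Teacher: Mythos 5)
Your \emph{if} direction is essentially the paper's: produce the bundle $\xi_f$ and invoke Theorem~\ref{globaluniqueness} twice (once for each possible orientation of $\rho$). The only soft spot is the phrase ``as in Theorem~\ref{thm1}, split off a trivial summand'' applied to $\lambda_f=\bigoplus_i L_i^{f(i)}$: the geometric splitting of Section~\ref{xi} was carried out for the untwisted $\lambda$, and it is cleaner to note that $\lambda_f\cong\theta_f^*\lambda$ for the conjugation map $\theta_f$ and set $\xi_f\letbe\theta_f^*\xi$, which is exactly what the paper does. This fills the gap with no extra work.

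Your \emph{only if} direction takes a genuinely different and more elementary route than the paper. The paper first proves Theorem~\ref{classification}, which classifies \emph{all} rank-$n$ complex bundles with $p(\eta_\R)=p(K)$ up to isomorphism; its proof is homotopy-theoretic, restricting the classifying map $\eta\colon DJ(K)\lra BU(n)$ to the sub-tori $BT^\alpha$, applying Theorem~\ref{facts} to recognize each restriction as induced by a homomorphism $T^\alpha\ra T^n$, and gluing the conjugation data across faces via a Weyl-group argument. You instead work entirely in the Stanley--Reisner ring: restrict the identity $c(\eta)\,\overline{c(\eta)}=p(K)$ to $\Z[v_i:i\in\mu]$ and invoke unique factorization. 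Your argument only pins down the Euler class $c_n(\eta)$ rather than the full isomorphism type of $\eta$, but that is precisely all Theorem~\ref{thm4} needs, so you get away with much lighter machinery. The UFD step itself is sound: $c(\eta)|_\mu$ divides $\prod_{i\in\mu}(1-v_i^2)$, so its prime factors lie among $1\pm v_i$, and the constant term $1$ and the multiplicity count force $c(\eta)|_\mu=\prod_{i\in\mu}(1+g_\mu(i)v_i)$. In fact the gluing of the $g_\mu$ is automatic, since $g_\mu(i)$ is nothing but the coefficient of $v_i$ in $c_1(\eta)$, read off in $\Z[K]$ directly.

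One small inaccuracy: the claim ``every degree-$n$ monomial of $\Z[K]$ is supported on some maximal face'' is not literally correct. A nonzero degree-$n$ monomial has support a face of $K$, but for a non-pure complex that face need not lie in any $n$-element face, and even for pure $K$ the monomial $v_i^n$ is supported on $\{i\}$, not on an $n$-face. The correct justification for $c_n(\eta)=e_f(K)$ is to apply the monomorphism $\Z[K]\hookrightarrow\bigoplus_{\alpha\in K}\Z[\alpha]$ of Remark~\ref{limitzk}: your own UFD argument, run in $\Z[v_i:i\in\alpha]$ for an arbitrary face $\alpha$, shows that $c(\eta)|_\alpha$ has degree $|\alpha|$, hence $c_n(\eta)|_\alpha=0$ whenever $|\alpha|<n$, which together with $c_n(\eta)|_\mu=\omega_f(\mu)v_\mu$ for the $n$-element faces determines $c_n(\eta)$ and matches $e_f(K)$ face by face. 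With that repair the algebraic proof is complete.
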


\begin{rem}\label{nonoriented}
Since Davis-Januszkiewicz spaces are simply connected, every real vector bundle $\rho$ over
$DJ(K)$ is orientable. The Euler
classes for different orientations may only differ by a sign.
In Theorem \ref{thm2}, Theorem \ref{thm3} and Theorem \ref{thm4} we made the assumption that $\rho$
is oriented. This is not a serious restriction. For example, if $\eta$ is a complex
and $\rho$ a real vector bundle over $DJ(K)$ such that $\rho$ and $\eta_\R$ are isomorphic
as non-oriented vector bundles,
then the canonical orientation on $\eta_\R$ must agree with one of the possible orientation
for $\rho$. That is that $\xi_\R$ and $\rho$ with the appropriate orientation
are isomorphic as oriented bundles.
\end{rem}

Several of the vector bundles will be constructed
as homotopy orbit spaces.
For a compact Lie group $G$ and a $G$-space $X$,
the {\it Borel construction} or {\it homotopy orbit space}
$EG\times_G X$ will be denoted by $X_{hG}$. If $\eta$ is an $n$-dimensional
$G$-vector bundle over $X$
with total space $E(\eta)$, the Borel construction establishes a fibre bundle
$E(\eta)_{hG} \lra X_{hG}$. In fact, this is an $n$-dimensional vector bundle over
$X_{hG}$ \cite{segal}, denoted by $\eta_{hG}$.

There are several connections with the work of Davis and Januszkiewicz, in particular with
Section 6 of
\cite{daja}.
For every simplicial complex $K$ there exists an associated moment angle complex
$Z_K$ which carries a $T^m$-action.
If $K$ is the triangulation of an $(n-1)$-dimensional sphere, the moment angle complex
$Z_K$
is an $(m+n)$-dimensional manifold.
The tangent bundle $\tau_Z$ is a $T^m$-equivariant
real vector bundle. Applying the Borel construction establishes
a vector bundle $(\tau_Z)_{hT^m}$ over $(Z_K)_{hT^m}\simeq DJ(K)$ which is stably isomorphic to
$\lambda_\R$.
In particular, $(\tau_Z)_{hT^m}$ and $\xi_\R$ have equal Pontrjagin classes.
Some details of the construction are recalled in the next section.

An oriented quasi toric manifold $M^{2n}$  is an oriented $2n$-dimensional manifold
with an $T^n$-action such that the action is locally standard and
the orbit space $M^{2n}/T^n=P$ is a
simple polytope. We apply the Borel construction to the tangent bundle $\tau_M$ of $M^{2n}$
and get a vector bundle $(\tau_m)_{hT^n}$ over $(M^{2n})_{hT^n}\simeq \djs(K_P)$, where
$K_P$ is the simplicial complex dual to the boundary of $P$. Again,
$(\tau_M)_{hT^n}$ and $\lambda_\R$ are stably isomorphic and $(\tau_M)_{hT^n}$ and $\xi_\R$ have the same Pontrjagin classes.
All this can be found in \cite[Sections 4 and 6]{daja}.

Theorem \ref{thm2} allows to draw the following corollary, which improves the stable isomorphism result about
$\lambda$ and $(\tau_{M})_{hT^n}$
\cite[Theorem 6.6]{daja}

\begin{cor} $\ \ \ $
\nz
(i) If $K$ is the triangulation of an $(n-1)$-dimensional sphere,
then $(\tau_Z)_{hT^m}$ and $\xi_\R\oplus \R^{m-n}$ are isomorphic.
\nz
(ii) If $M^{2n} \lra P$ is an oriented
quasi toric manifold over the simple polytope $P$,
then $(\tau_M)_{hT^n}\oplus \R$ and $\xi_\R\oplus \R$ are isomorphic.
\end{cor}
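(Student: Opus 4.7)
The plan is to deduce both statements directly from Theorem \ref{thm2}(ii), using as input the stable isomorphism of $(\tau_Z)_{hT^m}$ and $(\tau_M)_{hT^n}$ with $\lambda_\R$ recorded in \cite[Section 6]{daja} together with the decomposition $\lambda\cong \xi\oplus \C^{m-n}$ furnished by Theorem \ref{thm1}. No new geometric input is needed; the corollary is essentially a bookkeeping of the hypotheses of Theorem \ref{thm2}.

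First I would observe that stably isomorphic real vector bundles share the same total Pontrjagin class. Since Theorem \ref{thm1} yields $\lambda_\R\cong \xi_\R\oplus \R^{2(m-n)}$, it follows that $p(\lambda_\R)=p(\xi_\R)=p(K)$, so
\[
p\bigl((\tau_Z)_{hT^m}\bigr)=p(\xi_\R) \spandsp p\bigl((\tau_M)_{hT^n}\bigr)=p(\xi_\R).
\]
With the Pontrjagin classes matched, only a dimension count separates the two cases. In part (i), $(\tau_Z)_{hT^m}$ has fibre dimension $\dim Z_K=m+n$; since $K$ triangulates an $(n-1)$-sphere, $m\geq n+1$, so $s=m+n>2n$, and Theorem \ref{thm2}(ii) delivers
\[
(\tau_Z)_{hT^m}\cong \xi_\R\oplus \R^{(m+n)-2n}=\xi_\R\oplus \R^{m-n}.
\]
In part (ii), $(\tau_M)_{hT^n}$ has fibre dimension $2n$, placing us in the borderline case $s=2n$ of Theorem \ref{thm2}(ii); this directly produces $(\tau_M)_{hT^n}\oplus \R\cong \xi_\R\oplus \R$.

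The only subtle point is orientation compatibility, because Theorem \ref{thm2}(ii) produces oriented isomorphisms. Both $(\tau_Z)_{hT^m}$ and $(\tau_M)_{hT^n}$ inherit orientations from $Z_K$ (which is almost complex whenever $K$ triangulates a sphere) and from the oriented manifold $M^{2n}$, and these orientations pass through the Borel construction since $T^m$ and $T^n$ act preserving them. By Remark \ref{nonoriented}, and because $DJ(K)$ is simply connected, we may freely reverse orientations to align with that of $\xi_\R$, after which the hypotheses of Theorem \ref{thm2}(ii) are fulfilled. I do not anticipate any genuine obstacle: the substantive content sits in Theorems \ref{thm1} and \ref{thm2}, and the corollary is a verification of hypotheses.
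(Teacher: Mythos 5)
Your argument is correct and is exactly the route the paper intends: match Pontrjagin classes via the stable isomorphisms with $\lambda_\R$ from \cite[Section~6]{daja} and the splitting $\lambda\cong\xi\oplus\C^{m-n}$, then invoke Theorem~\ref{thm2}(ii), using $s=m+n>2n$ (since a triangulation of $S^{n-1}$ has $m\geq n+1$ vertices) for part~(i) and the borderline case $s=2n$ for part~(ii). One small caution: your parenthetical that $Z_K$ is almost complex whenever $K$ triangulates a sphere is both unnecessary and not obviously true; the orientability you need is already guaranteed by Remark~\ref{nonoriented}, since $DJ(K)$ is simply connected, so the rest of your orientation discussion carries the argument on its own.
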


The connected sum $\C P(2)\sharp \C P(2)$ of $\C P(2)$ with itself is a quasitoric manifold
over a square, but does not admit an almost complex structure
\cite[Section 0 (C') and Example 1.19]{daja} \cite[page 68]{bupa}. In particular, for $M=\C P(2)\sharp \C P(2)$,
the real vector  bundle
$(\tau_M)_{hT^n}$ has no complex structure.
This shows that in this case $\xi_\R$ and $(\tau_M)_{hT^n}$ cannot be  isomorphic and that we cannot
omit the trivial summand in the second part of the above Corollary as well as in Theorem \ref{thm2}(ii).

Equivariant and non-equivariant almost complex structures for
quasi toric manifolds are discussed in \cite[Section 5]{bupa}.
Every equivariant complex structure for a quasi toric manifold $M$
establishes a complex structure for
the Borel construction $ (\tau_M)_{hT^n}$.
Since the total Pontrjagin class of $(\tau_M)_{hT^n}$ equals $\prod_{i=1}^m(1-v_i^2)$,
Theorem \ref{thm4} tells us that the Euler class $e((\tau_M)_{hT^n})$ decides whether
$(\tau_M)_{hT^n}$ admits a complex structure.
The calculation of $e((\tau_M)_{hT^n})$ can be worked out, but will be discussed
in a forthcoming paper \cite{dono}. It turns out that $(\tau_M)_{hT^n}$ has a complex structure if and only
$\tau_M$ has one. The combinatorial conditions for both cases are the same.
For a few more details see Section 7.

In a recent paper \cite{kustarev}, Kustarev gave a combinatorial condition, described in terms of
omniorientations of the quasi toric manifold $M$,
which is sufficient and necessary for the existence of an equivariant
almost complex structure for M.
One can show that his condition and our condition established
in Theorem \ref{thm4} are equivalent\cite{dono}. In contrast to his result we are able
to give a complete classification of all complex structures for $(\tau_M)_{hT^n}$.
For example, we have the following classification result. We say that two complex structure $\eta_1$ and $\eta_2$ for an real
vector vector bundle $\rho$ are isomorphic if $\eta_1\cong \eta_2$ as complex vector bundles.

\begin{cor} \label{cor7}
Let $K$ be an $(n-1)$-dimensional abstract finite simplicial complex and let $s\geq n$. Let $\rho$ be an
$2s$-dimensional
oriented real vector bundle over $DJ(K)$ such that $p(\rho)=p(K)$.
\nz
(i) If $s=n$, the number of isomorphism classes of
complex structures for $\rho$ equals the number of functions $f\colon [m]\lra \{\pm1\}$ such that
$e_f(K)=\pm (\rho)$. In particular, there exists only a
finite number of isomorphism classes of complex structures for $\rho$.
\nz
(ii) If $s>n$ then there exists exactly $2^m$ non isomorphic complex structures for $\rho$.
\end{cor}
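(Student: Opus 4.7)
The approach is to construct $2^m$ candidate complex structures and then use the uniqueness results from Theorem \ref{thm2} to exhaust all of them. For each $f\colon[m]\to\{\pm1\}$, the coordinate-wise complex conjugation action of $\{\pm1\}^m\subset T^m$ induces a self-equivalence $\phi_f\colon DJ(K)\to DJ(K)$ with $\phi_f^*v_i=f(i)v_i$ (obtained by restricting the analogous self-equivalence of $BT^m$). Define $\xi_f\letbe\phi_f^*\xi$. By naturality $c(\xi_f)=\prod_i(1+f(i)v_i)$, hence $c_n(\xi_f)=\sum_{\mu\in M_K}\bigl(\prod_{i\in\mu}f(i)\bigr)v_\mu=e_f(K)$ and $p((\xi_f)_\R)=p(K)$. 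Because $c_1(\xi_f)=\sum f(i)v_i$ and the $v_i$ form a $\z$-basis of $H^2(DJ(K);\z)$, different $f$ yield non-isomorphic $\xi_f$.

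For part (ii) with $s>n$, the bundle $\xi_f\oplus\C^{s-n}$ has rank $s$ and realification $(\xi_f)_\R\oplus\R^{2(s-n)}$ of real dimension $2s>2n$ with total Pontrjagin class $p(K)=p(\rho)$. By Theorem \ref{thm2}(ii) this realification is oriented-isomorphic to $\rho$, so $\xi_f\oplus\C^{s-n}$ is a complex structure on $\rho$, giving $2^m$ pairwise non-isomorphic complex structures (the $c_1$ argument again). For the converse, the key claim is that any complex bundle $\eta$ of rank $s$ with $p(\eta_\R)=p(K)$ satisfies $c(\eta)=\prod_i(1+f(i)v_i)$ in $\z[K]$ for some $f\colon[m]\to\{\pm1\}$; granting this, Theorem \ref{thm2}(i) applied after transport by $\phi_f$ gives $\eta\cong\xi_f\oplus\C^{s-n}$.

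For part (i) with $s=n$: each $f$ with $e_f(K)=\pm e(\rho)$ gives a complex structure $\xi_f$ on $\rho$. Indeed, $(\xi_f)_\R$ has Pontrjagin class $p(\rho)$ and Euler class $\pm e(\rho)$, so Theorem \ref{thm2}(iii) (possibly after reversing the orientation on $\rho$) identifies $(\xi_f)_\R$ with $\rho$ as oriented bundles, and in particular as non-oriented bundles. Conversely, by the key claim above every rank-$n$ complex structure $\eta$ on $\rho$ is isomorphic to some $\xi_f$, and the unoriented isomorphism $\eta_\R\cong\rho$ forces $c_n(\xi_f)=e_f(K)=\pm e(\rho)$. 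Finiteness of the count follows because $M_K$ is finite, and existence of at least one such $f$ when $\rho$ admits a complex structure is precisely Theorem \ref{thm4}.

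The principal obstacle is the algebraic claim that the identity $c(\eta)\cdot\tilde c(\eta)=p(\eta_\R)=\prod_i(1-v_i^2)$ in $\z[K]$, with $\tilde c_k\letbe(-1)^kc_k$, forces $c(\eta)$ to factor as $\prod_i(1+f(i)v_i)$ with $f(i)\in\{\pm1\}$. Writing $c_1(\eta)=\sum a_iv_i$, matching the diagonal coefficients of $v_i^2$ in degree $4$ combined with integrality of $c_2(\eta)$ forces each $a_i$ to be odd; inspecting top-diagonal coefficients $v_i^{2k}$ in higher even degrees (using the Stanley--Reisner vanishing $v_\beta=0$ for $\beta\notin K$) then pins down $a_i^2=1$. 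An induction on degree shows that the higher Chern classes $c_k(\eta)$ are uniquely determined and coincide with the corresponding term of $\prod_i(1+f(i)v_i)$ in $\z[K]$. Carrying out this degree-by-degree analysis cleanly is the delicate part of the proof.
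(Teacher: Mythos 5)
Your proposal is essentially correct, but it takes a genuinely different route from the paper. The paper's proof of Corollary~\ref{cor7} is a short deduction from Theorem~\ref{classification} and Proposition~\ref{classcxstructure} (and Remark~\ref{stableclass} for the stable case), where the real work is in Theorem~\ref{classification}(ii): it shows directly that $\eta\simeq\xi_f$ for some $f$ by first using Theorem~\ref{globaluniqueness} to conclude $\eta_\R\simeq\rho_\omega$, and then using Theorem~\ref{facts} (maps from $BT^\alpha$ into classifying spaces of compact Lie groups are induced by homomorphisms) to pin down the restriction $\eta|_{BT^\alpha}$ to be a coordinate-wise inclusion up to conjugation. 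You instead try to establish the ``key claim'' --- that $p(\eta_\R)=p(K)$ forces $c(\eta)=\prod_i(1+f(i)v_i)$ --- by a purely algebraic analysis of Chern classes in $\z[K]$, and then invoke Theorem~\ref{thm2}(i) after transport by $\theta_f$. This bypasses Theorem~\ref{facts} entirely and replaces the homotopy-theoretic analysis of torus restrictions with a cohomological one. The rest of your argument (construction of $\xi_f$, injectivity of $f\mapsto\xi_f$ via $c_1$, Theorem~\ref{thm2}(ii)/(iii) to match $(\xi_f)_\R$ with $\rho$) matches the paper's use of Proposition~\ref{classcxstructure} and Theorem~\ref{thm2}.

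The ``delicate part'' you flag can, however, be done more cleanly than the degree-by-degree coefficient matching you sketch. Apply the algebra monomorphism $\z[K]\to\prod_{\alpha\in K}\z[\alpha]$ of Remark~\ref{limitzk} to the identity $c(\eta)\cdot\tilde c(\eta)=\prod_i(1-v_i^2)$, where $\tilde c_k=(-1)^kc_k$. For each face $\alpha$, $h^\alpha(c(\eta))$ is a polynomial with constant term $1$ in the UFD $\z[\alpha]$, dividing $\prod_{i\in\alpha}(1+v_i)(1-v_i)$. Since $h^\alpha(\tilde c(\eta))(v)=h^\alpha(c(\eta))(-v)$, each factor $(1\pm v_i)$ appearing in $h^\alpha(c(\eta))$ contributes a complementary factor in $h^\alpha(\tilde c(\eta))$, so $h^\alpha(c(\eta))$ must take exactly one linear factor from each pair $\{1+v_i,1-v_i\}$, $i\in\alpha$, i.e.\ $h^\alpha(c(\eta))=\prod_{i\in\alpha}(1+\epsilon^\alpha_iv_i)$ with $\epsilon^\alpha_i\in\{\pm1\}$. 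Compatibility under $\beta\subset\alpha$ shows $\epsilon^\alpha_i$ depends only on $i$, defining $f(i)\letbe\epsilon^{\{i\}}_i$, and injectivity of $\z[K]\to\prod_\alpha\z[\alpha]$ then gives $c(\eta)=c_f(K)$. This simultaneously shows $c_k(\eta)=0$ for $k>n$ (which your sketch would need to establish separately for $s>n$). With the key claim proved this way, your deduction of both parts goes through; the only blemishes are a typo ($\pm(\rho)$ should be $\pm e(\rho)$, already in the statement) and the remark that finiteness in part~(i) comes from $[m]$ being finite rather than $M_K$.
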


In Section \ref{cxstructure} we will give an explicit description of the complex structures of the bundle $\rho$.
In fact, for each function $f\colon [m]\lra \{\pm 1\}$, we will construct an $n$-dimensional complex vector bundle
$\xi_f$ over $DJ(K)$. The complex structures are represented by the bundles $\xi_f$ with
$e_f(K)=\pm e(\rho)$ respectively $\xi_f\oplus \C^{s-n}$.

Theorem \ref{thm2} also allows to relate colorings of simplicial complexes to
stable splittings of the vector bundle
$\xi$.
A regular  {\it $r$-paint coloring}, an {\it $r$-coloring} for short,
of a simplicial complex $K$ is a non degenerate simplicial map
$K\lra \Delta[r]$, i.e. each face of $K$ is mapped isomorphically onto a
face of $\Delta[r]$.
If $\dim K=n-1$, then $K$ only allows $r$-colorings for $r\geq n$.
The inclusion $K\subset \Delta([m])$ always provides an m-coloring.

\begin{cor}
For an $(n-1)$-dimensional simplicial complex $K$ the following conditions are equivalent
\nz
(i) $K$  admits an $r$-coloring $K \lra \Delta[r]$.
\nz
(ii)
The vector bundle $\xi \oplus \C^{r-n}\cong \bigoplus \nu_i$ splits into a direct sum of
complex line bundles.
\nz
(iii)
The vector bundle $\xi_\R\oplus \R^{2(r-n)}\cong \bigoplus_{i=1}^r \theta_i$ splits into a direct
sum of 2-dimensional real vector bundles.
\end{cor}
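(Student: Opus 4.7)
The plan is to close the cycle (i)$\Rightarrow$(ii)$\Rightarrow$(iii)$\Rightarrow$(i). For (i)$\Rightarrow$(ii), given an $r$-coloring $\varphi\colon K\lra \Delta[r]$, I would use the $H$-space multiplication on $\C P^\infty$ to build a map $BT^m\lra BT^r$ whose $\ell$-th coordinate multiplies the factors indexed by the vertices of color $\ell$, and compose it with $q_K$ to obtain $f\colon\djs(K)\lra BT^r=\djs(\Delta[r])$. Pulling the universal $r$-dimensional torus bundle $\gamma_1\oplus\cdots\oplus\gamma_r$ back along $f$ yields a sum $\nu=\nu_1\oplus\cdots\oplus\nu_r$ of complex line bundles with $c_1(\nu_\ell)=\sum_{j\in\varphi^{-1}(\ell)}v_j$. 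Expanding $\prod_\ell(1+\sum_{j\in\varphi^{-1}(\ell)}v_j)$ in $\Z[K]$, and using the non-degeneracy of $\varphi$ together with the vanishing $v_\alpha=0$ for $\alpha\notin K$, gives $c(\nu)=\prod_{j=1}^m(1+v_j)=c(\xi)$. Theorem \ref{thm2}(i) then delivers $\nu\cong\xi\oplus\C^{r-n}$, which is the required splitting.

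The implication (ii)$\Rightarrow$(iii) is trivial by realification. For (iii)$\Rightarrow$(i), suppose $\xi_\R\oplus\R^{2(r-n)}\cong\bigoplus_{i=1}^r\theta_i$. Because $\djs(K)$ is simply connected, each $\theta_i$ is orientable, and since an oriented real $2$-plane bundle has structure group $SO(2)=U(1)$, one has $\theta_i\cong(\nu_i)_\R$ for a complex line bundle $\nu_i$. Write $u_i\letbe c_1(\nu_i)=\sum_{j=1}^m a_{i,j}v_j$ with $a_{i,j}\in\Z$; this is unambiguous since $\{v_1,\dots,v_m\}$ is a $\Z$-basis of $H^2(\djs(K);\Z)=\Z[K]^2$. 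Equating total Pontrjagin classes yields $\prod_{i=1}^r(1-u_i^2)=\prod_{j=1}^m(1-v_j^2)$ in $\Z[K]$, whose degree four part reads $\sum_i u_i^2=\sum_j v_j^2$. Expanding in the $\Z[K]^4$-basis $\{v_j^2\}\cup\{v_jv_k:\{j,k\}\in K\}$ and comparing coefficients produces the integer identities
\begin{equation*}
\sum_{i=1}^r a_{i,j}^2=1\ \ (j\in[m]),\qquad \sum_{i=1}^r a_{i,j}a_{i,k}=0\ \ (\{j,k\}\in K).
\end{equation*}
The first forces, for each $j$, a unique index $\varphi(j)\in[r]$ with $a_{\varphi(j),j}=\pm1$ and $a_{i,j}=0$ otherwise; in the second sum the only potentially non-zero term occurs at $i=\varphi(j)=\varphi(k)$ (and then equals $\pm1$), so one must have $\varphi(j)\neq\varphi(k)$ for every edge $\{j,k\}\in K$. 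Since each face of $K$ is a clique, $\varphi$ is injective on every face, i.e.\ it is an $r$-coloring.

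The hard part will be (iii)$\Rightarrow$(i): reading a combinatorial coloring out of an a priori purely topological splitting of real bundles. The crucial observation is that $\Z[K]$ is sparse enough in degree four that, once one knows the coefficients of the $u_i$ in the basis $\{v_j\}$ are integers, the sum-of-squares identity forced by the total Pontrjagin class is rigid: it compels each $u_i$ to be $\pm$ a single vertex generator and forbids two edge-adjacent generators from being attached to the same $u_i$.
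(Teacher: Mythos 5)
Your proof is correct, but it takes a genuinely different and more self-contained route than the paper's. The paper's proof leans entirely on the earlier reference \cite{no2}: from an $r$-coloring it imports a splitting of $\lambda$ into line bundles, transfers it to $\xi\oplus\C^{r-n}$ via Theorem~\ref{thm2}, realifies, and conversely passes from a splitting of $\xi_\R\oplus\R^{2(r-n)}$ to one of $\lambda_\R\cong\xi_\R\oplus\R^{2(m-n)}$ and then cites \cite{no2} again to recover a coloring. You instead reprove both of these imported facts from scratch. For (i)$\Rightarrow$(ii) you build $\nu=\bigoplus\nu_\ell$ directly from the coloring via a product map $BT^m\lra BT^r$ and verify $c(\nu)=c(\xi)$ in $\Z[K]$ using nondegeneracy of the coloring (so that the surviving monomials are exactly the $v_\alpha$ with $\alpha\in K$), which is the same reduction to Theorem~\ref{thm2}(i) but without a citation. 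The genuinely new content is your (iii)$\Rightarrow$(i): you bypass $\lambda_\R$ entirely and extract the coloring combinatorially from the degree-four component of the total Pontrjagin identity in $\Z[K]$. The integer sum-of-squares constraint $\sum_i a_{i,j}^2=1$ is rigid, forcing each first Chern class $u_i$ to concentrate on a single vertex generator, and the cross-term constraint on edges forbids two adjacent vertices from sharing a color; since faces are cliques, this yields a coloring. This is an attractive, elementary argument that makes the corollary independent of \cite{no2}.

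Two small points worth being explicit about. First, your degree-four basis claim --- that $\{v_j^2\}\cup\{v_jv_k:\{j,k\}\in K\}$ is a $\Z$-basis of $\Z[K]^4$ and $\{v_j\}$ a basis of $\Z[K]^2$ --- implicitly assumes every singleton $\{j\}$ is a face of $K$ (no ghost vertices). This is the standard convention in the Davis-Januszkiewicz setting and is tacitly assumed throughout the paper, but it is used here, so one might flag it. Second, the precise sign in the paper's stated formula $p(\lambda_\R)=\prod(1-v_i^2)$ versus the Milnor--Stasheff convention $p(\nu_\R)=1+c_1(\nu)^2$ is immaterial for your argument: whichever sign convention one adopts, the degree-four part of $\prod_i(1\mp u_i^2)=\prod_j(1\mp v_j^2)$ reduces to $\sum_i u_i^2=\sum_j v_j^2$, which is all you use.
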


\begin{proof}
The proof is based on a similar result for colorings and splittings of $\lambda$ and $\lambda_\R$.
We can assume that $n\leq r\leq m$.
An $r$-coloring $K\lra \Delta([r])$ establishes a splitting
$\lambda\cong \bigoplus_{i=1}^r \nu_i\oplus \C^{m-r}$ \cite{no2}.
The bundles $\bigoplus_{i=1}^r \nu_i$ and $\xi\oplus\C^{r-n}$ have the same Chern classes.
Theorem \ref{thm2} shows that both vector bundles are isomorphic and produces the
desired splitting for $\xi\oplus\C^{r-n}$. The realification of both sides
establishes a splitting for $\xi_\R\oplus \R^{2(m-r)}$.
A splitting of $\xi_\R\oplus \R^{2(r-n)}$ produces a splitting for
$\lambda_\R\cong \xi_\R \oplus \R^{2(m-n)}$. And such a splitting establishes a coloring
$K\lra \Delta([r])$ \cite{no2}.
\end{proof}

The paper is organized as follows. In the next section we describe different
models for $DJ(K)$ needed for our purposes. A geometric
construction of the vector bundle $\xi$ with the properties stated in
Theorem \ref{thm1} is  contained in Section \ref{xi}. Using
Sullivan's arithmetic square
we reduce the global uniqueness problem for our bundles to the analogous $p$-adic question
in
Section \ref{global uniqueness}. This section also contains a proof of Theorem \ref{thm3}.
The $p$-adic case
involves the calculation of some higher derived inverse limits.
In Section 5 we provide some general remarks about higher limits and filtrations of functors,
needed in Section \ref{p-adic case} for the proof of the $p$-adic existence and uniqueness statements.
In the last section we discuss complex structures for real vector bundles and prove Theorem \ref{thm4}
and Corollary \ref{cor7}.

If not specified otherwise, $K$ will always denote an abstract
finite simplicial complex of dimension $n-1$ with $m$ vertices and  $M_K$ the set of maximal faces,
We also fix the classes $c(K)=\prod_{i=1}^m(1+v_i)$ and $p_K=\prod_{i=1}^m(1-v_i^2)$.

We would like to express our gratitude to Natalia Dobrinskaya, Taras Panov and Nigel Ray for many
helpful discussions.

\section{Models for $DJ(K)$} \label{models}

For the proof of our theorems we will need different models for
the space $DJ(K)$,
which we will describe in this section.

Let $\catk$ be the category whose objects are the faces of $K$ and whose arrows are given by
the subset relations between the faces. $\catk$ has an initial object given by the empty face.
Given a pair $(X,Y)$ of pointed topological space we can define  covariant functors
$$
X^K,\ (X,Y)^K    
$$
The functor $X^K$ assigns to each face $\alpha$ the cartesian product
$X^\alpha$ and to each morphism $i_{\alpha,\beta}$ the inclusion
$X^\alpha \subset X^\beta$ where
missing coordinates are set to the base point $*$. If $\alpha=\emptyset$, then $X^\alpha$ is a point.
And $(X,Y)^K$ assigns to
$\alpha$ the product
$X^\alpha\times Y^{[m]\setminus \alpha}$ and to $i_{\alpha,\beta}$
the coordinate
wise inclusion
$X^\alpha\times Y^{[m]\setminus \alpha}\subset
X^\beta\times Y^{[m]\setminus \beta}$.
The inclusions $X^\alpha \subset X^{[m]}=X^m$ and
$X^\alpha\times Y^{[m]-\alpha}\subset X^\alpha\times X^{[m]-\alpha}=X^m$
establish
inclusions
$$\colim_{\catk}X^K \lra X^m,\ \
\colim_{\catk}(X,Y)^K \lra X^m.
$$

We are interested in two particular cases, namely the functor $X^K$ for the
classifying space $BT=\C P(\infty)$ of the 1-dimensional
circle $T$ (considered as a topological group) and the functor
$(X,Y)^K$ for the pair $(D^2,S^1)$.
The colimit
$$
Z_K\letbe \colim_{\catk}\, (D^2,S^1)^K
$$
is called the {\it moment angle
complex} associated to $K$. We have inclusions $Z_K \subset (D^2)^m\subset \C^m$
and the standard $T^m$-action on $\C^m$ restricts to $Z_K$.
The Borel construction produces a fibration
$$
q_K\colon (Z_K)_{hT^m} \lra BT^m
$$
with fiber $Z_K$. Moreover, $B_T K\letbe (Z_K)_{hT^m}$ is a realization of
the Stanley-Reisner algebra $\Z[K]$. That is there exists an isomorphism
$H^*(B_TK;\Z)\cong \Z[K]$  such that the map $H^*(q_K;\bZ)$
can be identified with the map $\bZ[m]\lra \Z[K]$ \cite[Theorem 4.8]{daja}.
We will use this model for the geometric construction of
our vector bundle $\xi$.

Buchstaber and Panov gave a different construction for $DJ(K)$.
They showed that the colimit
$c(K)\letbe \colim_{\catk}\, BT^K$ is
homotopy equivalent to $B_TK$ and that
the inclusion
$$
c(K) \lra BT^m
$$
is homotopic to $q_K$ \cite[Theorem 6.29]{bupa}.

We wish to study homotopy theoretic properties of $c(K)$.
Colimits behave poorly from a homotopy theoretic point of view,
but the left derived functor, known as homotopy colimit,
provides the appropriate tool for such questions.
Following \cite{vogt}, the homotopy colimit
$hc(K)\letbe \hocolim_{\catk}\, BT^K$
may be described as the two sided bar
construction $B(*,\catk,BT^K)$ in $\Top$. For the functor $BT^K$,
the map
$$
\hocolim_{\catk}\, BT^K \lra \colim_{\catk}\, BT^K
$$
is a homotopy equivalence \cite{nora1}. In particular,
$hc(K)\simeq c(K)$.
We will use the model $hc(K)$ to prove the uniqueness properties
of the vector bundle $\xi$ and to provide a homotopy
theoretic construction of $\xi$.

For later purpose we include the following remark.

\begin{rem} \label{limitzk}
For $\alpha\subset [m]$ we denote by $\Z[\alpha]$ the polynomial algebra generated by elements $v_i$, $i\in \alpha$, of degree 2,
one for each vertex in $\alpha$. The projection $\Z[m]\lra \Z[\alpha]$ which maps $v_i$ to $v_i$ if $i\in \alpha$
and to 0 otherwise is induced by the inclusion $i_{\alpha,[m]}\colon BT^\alpha\lra BT^m$.
By construction, this projection factors uniquely through a map
$h^\alpha\colon \Z[K] \lra \Z[\alpha]$. And this map is induced by the inclusion
$BT^\alpha\lra DJ(K)$ which comes from the description of $DJ(K)$
as colimit or homotopy colimit. The direct sum of the maps $h^\alpha$
induces a monomorphism of algebras $\Z[K]\lra \bigoplus_{\alpha\in K} \Z[\alpha]$.
\end{rem}

\section{Geometric construction of vector bundles} \label{xi}

The $m$-dimensional torus $T^m$ acts on $\C^m$ via coordinate wise multiplication.
Let $\lambda'\da Z_K$ denote the $m$-dimensional
$T^m$-equivariant complex vector bundle over $Z_K$ given by the diagonal action of
$T^m$ on $\C^m\times Z_K$. The Borel construction produces an $m$-dimensional complex bundle
$\lambda\letbe \lambda'_{hT^m}$ over $DJ(K)$ which is isomorphic to the pull back of the universal bundle
over $BU(m)$ along the composition of $q_K\colon DJ(K)\lra BT^m$ and the maximal torus inclusion $BT^m \lra BU(m)$.
As already mentioned, we have $c(\lambda)=c(K)$ (for details see \cite[Section 6]{daja}).

For $x,z\in \C^m$ we denote by $\quer z$ the coordinate wise complex conjugate of $z$ and
by $xz\in \C^m$ the coordinate wise product of $x$ and $z$; that is that the $i$-th coordinate
$(xz)_i$ of $x z$ is given by the product $x_iz_i$.
Let $A\in \C^{(m-n)\times m}$ be an $(m-n)\times m$-matrix. Each face $\alpha\in K$ defines a submatrix
$A_\alpha\in \C^{(m-n)\times (m-|\alpha|)}$ given by the columns indexed by the elements not in  $\alpha$.
The matrix
$A$ is called {\it $K$-admissible} if for each face $\alpha\in K$ the matrix $A_\alpha$
is an epimorphism.
For example, the matrix $A=(s^r)$,
$1\leq r\leq m,\ 1\leq s\leq m-n$ , is $K$-admissible for any $(n-1)$-dimensional complex
with $m$-vertices.

We define a map
$$
f_A \colon \C^{m}\times Z_K \lra \C^{m-n} \times Z_K
$$
by $f_A(x,z)\letbe (y, z)$ where $y\letbe A(x\quer z)$.
Here, we use that $Z_K\subset \C^m$.
If $T^m$ acts on the target only via the second coordinate, then the space
$E(m-n)\letbe\C^{m-n}\times Z_K$ is the total space of the
$(m-n)$-dimensional trivial $T^m$-bundle $\C^{m-n}\da Z_K$.
The source is the
total space $E(\lambda')$
of the $T^m$-bundle $\lambda'$.

\begin{prop} \label{geoconst}$\ \ \ \ $
Let $K$ be a $(n-1)$-dimensional complex and $A\in \C^{(m-n)\times m}$ be a $K$-admissible matrix.
\nz
(i) The map $f_A \colon E(\lambda') \lra E(m-n)$
is $T^m$-equivariant and a bundle epimorphism.
\nz
(ii) There exists a $T^m$-equivariant $n$-dimensional complex vector bundle $\xi'\da Z_K$ such that
$\lambda'\cong \C^{m-n}\oplus \xi'$ as $T^m$-equivariant vector bundles over $Z_K$.
\nz
(iii) We have $\lambda\cong \C^{m-n}\oplus \xi'_{hT^m}$ as vector bundles over $DJ(K)$.
\nz
(iv) The total Chern class of $\xi'_{hT^m}$ is given by
$c(\xi'_{hT^m})=c(K)\in \Z[K]$.
\nz
(v) The total Pontrjagin  class of $(\xi'_{hT^m})_{\R}$ is given by
$p((\xi'_{hT^m})_{\R})=p(K)\in \Z[K]$.
\end{prop}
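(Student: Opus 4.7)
The plan is to prove the five parts in the order stated, with Part (i) carrying essentially all the geometric content and the remaining parts following by formal manipulations with equivariant bundles, the Borel construction, and characteristic class stability.

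For Part (i), I would first verify $T^m$-equivariance by direct calculation: for $t=(t_1,\dots,t_m)\in T^m$ with $|t_i|=1$, the coordinate-wise identity $(tx)\overline{tz}=(t\bar t)(x\bar z)=x\bar z$ gives $f_A(tx,tz)=(A(x\bar z),tz)=t\cdot f_A(x,z)$, matching the target action $t\cdot(y,z)=(y,tz)$. The substantive step is fiberwise surjectivity. For $z\in Z_K$, set $\sigma(z)\letbe\{i\in[m]:z_i=0\}$. Using the description $Z_K=\bigcup_{\alpha\in K}(D^2)^\alpha\times(S^1)^{[m]\setminus\alpha}$, any $z$ lies in such a piece for some $\alpha\in K$, forcing $|z_i|=1$ hence $z_i\ne 0$ for $i\notin\alpha$; thus $\sigma(z)\subseteq\alpha$, and since $K$ is closed under subsets, $\sigma(z)\in K$. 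The image of $x\mapsto x\bar z$ is exactly the coordinate subspace $\{y\in\C^m:y_i=0\text{ for }i\in\sigma(z)\}$, so the image of $x\mapsto A(x\bar z)$ equals $A_{\sigma(z)}(\C^{m-|\sigma(z)|})$, which is all of $\C^{m-n}$ by $K$-admissibility applied to the face $\sigma(z)$.

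For Parts (ii) and (iii), once $f_A$ is a $T^m$-equivariant bundle epimorphism, its kernel $\xi'\letbe\ker f_A$ is a $T^m$-equivariant complex subbundle of $\lambda'$ of complex rank $n$. Averaging an arbitrary Hermitian metric on $\lambda'$ over the compact group $T^m$ yields a $T^m$-invariant Hermitian metric, and the orthogonal complement of $\xi'$ in $\lambda'$ is then a $T^m$-equivariant subbundle mapping isomorphically to $\C^{m-n}$ via $f_A$. This gives the equivariant splitting $\lambda'\cong\C^{m-n}\oplus\xi'$ of Part (ii). For Part (iii) I would apply the Borel construction, which is additive on equivariant direct sums; the Borel construction of the equivariantly trivial bundle $\C^{m-n}\to Z_K$ (with $T^m$ acting only on the base) is the non-equivariantly trivial bundle $\C^{m-n}$ over $(Z_K)_{hT^m}\simeq DJ(K)$, giving $\lambda\cong\C^{m-n}\oplus\xi'_{hT^m}$.

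Parts (iv) and (v) are formal consequences of the stability of total Chern and Pontrjagin classes under adding trivial summands: $c(\xi'_{hT^m})=c(\lambda)=c(K)$ and $p((\xi'_{hT^m})_\R)=p(\lambda_\R)=p(K)$, using the computations of $c(\lambda)$ and $p(\lambda_\R)$ recalled in the introduction. The main obstacle is the surjectivity argument in Part (i); the key point making the whole construction work is that the \emph{zero-coordinate set} $\sigma(z)$ of any point of $Z_K$ is automatically a face of $K$, which is exactly what is needed to invoke $K$-admissibility of $A$ on each fiber.
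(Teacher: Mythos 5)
Your proof is correct and follows essentially the same line as the paper: direct verification of equivariance, fiberwise surjectivity via $K$-admissibility applied to the zero-coordinate face, equivariant splitting of the short exact sequence $0\to\ker f_A\to\lambda'\to\C^{m-n}\to 0$, and then the Borel construction and stability of characteristic classes under trivial summands for parts (iii)--(v). The only cosmetic differences are that the paper cites Segal's theorem on splitting equivariant short exact sequences over compact bases where you supply the averaging argument directly, and that the paper invokes admissibility on a maximal face $\alpha$ containing the zero set whereas you apply it to $\sigma(z)$ itself, which is a slightly cleaner way of making the same point.
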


\begin{proof}
For $t\in T^m$, we have
$f_A(t(x,z)=f_A(tx, tz)=(A(tx\quer t \quer z,tz)=(A(x\quer z),tz)=tf_A(x,z)$.
This shows that $f_A$ is $T^m$-equivariant. Every point $z=(z_1,...,z_m)\in Z_K$ is contained
in $(S^1)^{[m]-\alpha}\times (D^2)^\alpha$ for some maximal face $\alpha\in K$.
In particular, $z_i\neq 0$ for $i\not\in \alpha$. Since the matrix $A_\alpha$ is invertible,
this shows that the restriction of $f_A$ to the fiber over $z$ is an epimorphism and that
$f_A$ is a
bundle epimorphism.
This proves the first part.

The kernel
$\xi'\letbe \ker f_A$ is again a $T^m$-equivariant vector bundle over $Z_K$.
Since for a compact Lie group $G$
every short exact sequence of $G$-equivariant vector bundles over compact spaces
splits \cite{segal},
we get $\lambda' \cong \xi'\oplus\C^{m-n}$.
This proves the second part. The other three parts are direct consequences
of Part (ii).
\end{proof}

\begin{rem} \label{defxi}
This gives a geometric construction of the vector bundle $\xi\letbe \xi'_{hT^m}$,
whose existence was claimed in Theorem \ref{thm1}. We will give an alternative proof of
Theorem \ref{thm1} in the later sections.

The construction of $\xi$ depends on the chosen matrix $A$. But, as Theorem \ref{thm2} says,
the isomorphism type is independent of $A$.
\end{rem}

\section{Uniqueness properties of $\xi$} \label{global uniqueness}

In this section we describe vector bundles by their
classifying maps. That is an $r$-dimensional complex vector bundle $\eta$ over a space $X$ is a map
$\eta\colon X\lra BU(r)$. Then, Theorem \ref{thm1} and Theorem \ref{thm2}
become statements about existence of maps
and the number of homotopy classes of maps realizing prescribed cohomological data
given by characteristic classes.
As usual, such problems are best approached by considering separately their rational and $p$-adic
versions. An application of Sullivan's arithmetic square will then provide the desired global information.
As a side effect we will also provide a homotopy theoretic construction of
the vector bundle $\xi\colon DJ(K)\lra BU(n)$, already constructed in Section \ref{xi}.

Let $U(r) \lra SO(2r)\lra SO(2r+1)$
denote the standard inclusions of the compact Lie groups $U(r)$, $SO(2r)$ and $SO(2r+1)$.
Composition with $BU(r) \lra BSO(2r)$ induces
the realification of a complex vector bundle $\eta\colon  X \lra BU(r)$
and, as in the introduction, will be denoted by $\eta_\R$. Composition with $BSO(2r) \lra BSO(2r+1)$
means that we add an 1-dimensional trivial bundle.
If we think of $T^r$ as the set of diagonal matrices in $U(r)$,
the compositions
$$
T^r\lra U(r)\lra SO(2r) \lra SO(2r+1)
$$
describe at each stage
the standard maximal torus of the compact Lie group. The integral cohomology
$H^*(BT^r;\Z)\cong \Z[v_1,...v_r]=\Z[r]$ is a polynomial algebra
with $r$ generators in degree 2.
Passing to classifying spaces
and cohomology establishes maps
$$
\begin{array}{rcl}
H^*(BU(r);\Z)&\lra &\Z[r]^{W_{U(r)}} = \Z[c_1,...,c_r]\subset \Z[r]\\
H^*(BSO(2r);\Z)&\lra &\Z[r]^{W_{SO(2r)}} = \Z[p_1,...,p_{r-1},e_r]\subset \Z[r]\\
H^*(BSO(2r+1);\Z)&\lra &\Z[r]^{W_{SO(2r+1)}} = \Z[p_1,...,p_r]\subset \Z[r],
\end{array}
$$
where $W_{U(r)}$, $W_{SO(2r)}$ and $W_{SO(2r+1)}$ denote the Weyl groups,
$c_i$ the universal Chern classes for complex, $p_i$ the universal Pontrjagin classes
for real
and $e_r$ the universal Euler class for oriented $2r$-dimensional real vector bundles.
The class $c_i$ is given by the $i$-th
elementary symmetric polynomial $\sigma_i(v_1,...v_r)$, the class $e_r$ by $\sigma_r(v_1,...v_r)$
and $p_i$ by $\sigma_i(-v_1^2,...,-v_r^2)$. The Euler class satisfies the
equation $e_r^2=(-1)^r p_r$.
In all cases the arrow is an
epimorphism , for $U(r)$ even an isomorphism. If we pass to rational
coefficients all arrows become isomorphisms.

If $K$ is a finite simplicial complex of dimension $n-1$,
the composition
$\Z[c_1,....,c_m] \lra \Z[m] \lra  \z[K]$ maps the Chern class $c_j$ to zero
for $j\geq n+1$.
Hence this composition factors uniquely through $\z[c_1,...,c_n]$ and establishes
a map $f\colon \Z[c_1,...,c_n]\lra \Z[K]$ and a commutative diagram
$$
\diagram
\Z[c_1,...,c_m] \dto \rto & \Z[m] \dto \\
\Z[c_1,...,c_n] \rto^{\ \ f} & \Z[K]
\enddiagram
$$
This diagram
can partly be realized by spaces and maps, namely by
$$
\diagram
BU(m) & \lto BT^m \\
BU(n) \uto & DJ(K) \uto
\enddiagram
$$
We want to construct a map $DJ(K) \lra BU(n)$
making the above diagram
commutative up to homotopy. Such a map is a realizations of $f$ and establishes an
$n$-dimensional complex vector bundle over $DJ(K)$ which has the
desired Chern classes. In fact, the bundle $\xi\da DJ(K)$ constructed in
Section \ref{xi} is such a map. But we will give here another construction.
We also want to show that
such a map is unique up to homotopy.

We can play a similar game with $BSO(2m)$ and $BSO(2n)$.
In this case, the composition $\Z[p_1,....,p_{m-1}, e_m] \lra \Z[m] \lra  \Z[K]$
factors through $\Z[p_1,...,p_{n-1}, e_n] \lra \Z[K]$. We have some freedom
for the choice of the image of the Euler class $e_n$.
Since $p_n$ is mapped to the non trivial class $p_n(K)\letbe(-1)^n\sigma_n(v_1^2,...v_m^2)$ i,
each square root of $(-1)^n p_n(K)$
establishes a different factorization
$\Z[p_1,...,p_{n-1}, e_n] \lra \Z[K]$.
Again, all these factorizations can be realized and are unique up to homotopy.
Before we make these statements precise, we need a description of all square roots.
The Euler classes $e_\omega$ associated to functions $\omega\colon M_K\lra \{\pm 1\}$
and defined in the introduction
provide a complete  list of such square roots.

\begin{lem} \label{eulerclass}
Let $K$ be an $(n-1)$-dimensional simplicial complex. Then,
every square root of
$(-1)^n p_n(K)$ is of the form $e_\omega$.
\end{lem}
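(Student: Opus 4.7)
The plan is to localize the Stanley--Reisner algebra to each maximal face via the injection of Remark \ref{limitzk}. Since $\dim K=n-1$ and the requirement $e_\omega\in H^{2n}$ forces $|\mu|=n$ for every $\mu\in M_K$, we restrict attention to top-dimensional maximal faces. Writing $h^\mu\colon\Z[K]\to\Z[\mu]$ for the projection that sends $v_i$ to $v_i$ if $i\in\mu$ and to $0$ otherwise, Remark \ref{limitzk} together with the fact that every face of $K$ lies inside some maximal face yields an injection
$$h\colon \Z[K]\hookrightarrow \bigoplus_{\mu\in M_K}\Z[\mu].$$

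First I would rewrite the class in question as $(-1)^n p_n(K)=\sigma_n(v_1^2,\dots,v_m^2)=\sum_{|\alpha|=n}v_\alpha^2$, observing that in $\Z[K]$ only the summands with $\alpha\in K$ survive, and such an $\alpha$ of size $n$ is automatically maximal. Thus $(-1)^n p_n(K)=\sum_{\mu\in M_K}v_\mu^2$. Applying $h^\mu$ annihilates $v_\nu^2$ for every $\nu\in M_K$ distinct from $\mu$, because two distinct maximal faces of the same cardinality cannot be nested, so $h^\mu\bigl((-1)^n p_n(K)\bigr)=\prod_{i\in\mu}v_i^2$ inside the polynomial ring $\Z[\mu]$.

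Now let $x\in H^{2n}(DJ(K);\Z)$ satisfy $x^2=(-1)^n p_n(K)$ and set $x_\mu\letbe h^\mu(x)$. The identity $x_\mu^2=\prod_{i\in\mu}v_i^2$ takes place in the UFD $\Z[\mu]$, whose only square roots of $\prod_{i\in\mu}v_i^2$ are $\pm v_\mu$, so $x_\mu=\omega(\mu)v_\mu$ for a unique sign $\omega(\mu)\in\{\pm 1\}$. It remains to compare $x$ with $e_\omega=\sum_{\mu\in M_K}\omega(\mu)v_\mu$: for each $\mu\in M_K$ we have $h^\mu(e_\omega)=\omega(\mu)v_\mu=x_\mu=h^\mu(x)$, since every other summand $\omega(\nu)v_\nu$ with $\nu\neq\mu$ maps to $0$ under $h^\mu$. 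Hence $x-e_\omega\in\ker h=0$, which gives $x=e_\omega$. The argument poses no real obstacle; the key point is simply that restricting to a single maximal face replaces $\Z[K]$ by a polynomial ring, where square roots are unique up to sign.
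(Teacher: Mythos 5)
Your argument travels the same road as the paper's: both proofs hinge on the monomorphism of Remark \ref{limitzk} to detect elements of $\Z[K]$ by their images in polynomial rings. The paper compares monomial coefficients of $e^2$ directly, whereas you project onto each $\Z[\mu]$ and invoke that a polynomial ring is a domain (so $\prod_{i\in\mu}v_i^2$ has exactly the two square roots $\pm v_\mu$) before gluing back via injectivity of $h$; this is a tidier packaging of the same idea, not a genuinely different route.

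One small caveat worth flagging, since it also lurks in the paper's own proof: the map $h\colon\Z[K]\to\bigoplus_{\mu}\Z[\mu]$ is injective only when $\mu$ ranges over \emph{all} maximal faces, not just those of cardinality $n$. If $K$ is not pure, a degree-$2n$ monomial supported on a maximal face of smaller cardinality lies in the kernel of the truncated sum, and your closing step $x-e_\omega\in\ker h=0$ would not be justified. The repair is costless: let $h$ run over all maximal faces; for a lower-dimensional maximal $\mu$ one has $h^\mu\bigl((-1)^np_n(K)\bigr)=0$, so $h^\mu(x)^2=0$ in the domain $\Z[\mu]$ forces $h^\mu(x)=0=h^\mu(e_\omega)$, and the conclusion $x=e_\omega$ follows as before. (The paper's implicit convention that $M_K$ consists of top-dimensional faces --- the only reading under which $e_\omega$ is homogeneous of degree $2n$ --- hides this; the complexes coming from polytopes and sphere triangulations are pure, so it never surfaces in the applications.)
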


\begin{proof}
Every square root $e$ of
$(-1)^n p_n(K)=\sigma_n(v_1^2,...,v_m^2)=\sum_{\mu\in M_K} v_\mu^2$
can be write as a linear combination $e=\sum_{i=1}^r a_iq_i$  of monomials
$q_i$ which are of the form $\prod_{j=1}^m v_j^{l_j}$ with $l_j\geq 0$ and $\sum_j l_j=n$.
We can compare the square $e^2=\sum_i a_i^2q_i^2 +\sum_{i < k} 2a_ia_jq_iq_k$
with $\sum_{\mu\in M_K} v_\mu^2$.
By Remark \ref{limitzk}, both expressions are equal in $\Z[K]$ precisely, when
all non trivial monomials have equal coefficients and
$q_i^2=0$ if and only if $q_i=0$. Hence, if $\mu\in K$ is a maximal face,
the coefficient of $v_\mu$ in $e$ is $\pm 1$ and all other coefficients
vanish.
This shows that $e=e_\omega$ for an appropriate function $\omega\colon M_K\lra \{\pm 1\}$.
\end{proof}

As already explained each class $e_\omega$ establishes a unique
factorization
$$
g_\omega\colon \Z[p_1,...,p_{n-1},e_n] \lra \Z[K].
$$
We say that a map $\rho_\omega\colon DJ(K)\lra BSO(2n)$ realizes
$g_\omega$ if $H^*(\rho_\omega ;\q)=g_\omega\otimes \Q$.

\begin{thm} \label{globalexistence}
Let $K$ be an $(n-1)$-dimensional finite simplicial complex and $\omega\colon M_K\lra \{\pm 1\}$ a function.
Then there exist realizations
$$
\xi\colon DJ(K) \lra BU(n) \ \text{ and }\  \rho_{\omega} \colon DJ(K)\lra BSO(2n)
$$
of
$f\otimes \Q$ and
$g_{\omega}\otimes \Q$
\end{thm}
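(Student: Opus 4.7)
For $\xi$, the existence is essentially given by the geometric construction in Section 3: Proposition \ref{geoconst} produces an $n$-dimensional complex bundle $\xi = \xi'_{hT^m}$ over $DJ(K)$ with $c(\xi)=c(K)$, and hence its induced map on rational cohomology agrees with $f\otimes\Q$. The substantive content of the theorem is therefore the construction of $\rho_\omega$.

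The natural strategy, as previewed in the introduction, is to apply Sullivan's arithmetic square: one supplies a rational realization $DJ(K)\to BSO(2n)_\Q$ together with compatible $p$-complete realizations $DJ(K)\p\to BSO(2n)\p$ for each prime $p$. Rationally $BSO(2n)_\Q$ is formal, with $H^*(BSO(2n);\Q)=\Q[p_1,\ldots,p_{n-1},e]$, so the rational realization amounts to choosing the classes $p_i=p_i(K)$ and $e=e_\omega(K)$; the single constraint $e^2=(-1)^n p_n$ holds because $v_\mu v_{\mu'}=0$ in $\Z[K]$ for distinct maximal faces (their union has cardinality exceeding $n$, so is not a face of $K$), whence $e_\omega(K)^2=\sum_\mu v_\mu^2=(-1)^n p_n(K)$. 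The $p$-complete existence is what Section \ref{p-adic case} will provide, working directly from the homotopy colimit model $DJ(K)\simeq\hocolim_{\catk}BT^K$.

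The essential local input, whether one routes through the arithmetic square or works directly with the hocolim, is the following. On each face $\alpha\in K$, since $|\alpha|\le n$, any injection $\iota_\alpha\colon\alpha\hookrightarrow[n]$ yields a composite $r_\alpha\colon BT^\alpha\to BT^n\hookrightarrow BSO(2n)$ realizing the prescribed Pontrjagin classes; on each maximal face $\mu$, the two possible orientations of $T^\mu\hookrightarrow SO(2n)$ differ by a reflection from $O(2n)\setminus SO(2n)$ and produce Euler classes $\pm v_\mu$, and one selects between them according to $\omega(\mu)$. Proper faces require no orientation choice because $H^{2n}(BT^\alpha;\Z)=0$ when $|\alpha|<n$. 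Remark \ref{limitzk} then reduces the global identities $p(\rho_\omega)=p(K)$ and $e(\rho_\omega)=e_\omega(K)$ to these local matchings, which are immediate by construction.

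The main obstacle is the coherence step. Strict compatibility of the $r_\alpha$ across face inclusions would demand an $n$-coloring of $K$, which in general does not exist, so they must instead be assembled into a homotopy-coherent natural transformation $BT^K\to\const_{BSO(2n)}$. Rigidifying them reduces to the vanishing of higher derived limits $\lim^{s+1}_{\catk}\pi_s(\map(BT^\alpha,BSO(2n)))$; the filtration-and-higher-limit techniques set up in Section 5 are precisely what is needed to establish this vanishing. That the full sign freedom $\omega\in\{\pm1\}^{M_K}$ arises is matched by the fact that $W_{SO(2n)}=S_n\ltimes(\Z/2)^{n-1}$ contains only \emph{even} sign changes, so reversing the orientation of a single factor of $T^n\hookrightarrow SO(2n)$ is an outer operation producing a genuinely distinct Euler class at each maximal face.
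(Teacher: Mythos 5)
Your proposal follows essentially the same route as the paper: Sullivan's arithmetic square combines the rational realization (immediate, since $BSO(2n)_0$ is a product of rational Eilenberg--MacLane spaces and $e_\omega(K)^2=(-1)^np_n(K)$ by Lemma~\ref{eulerclass}) with $p$-adic realizations built over $\hocolim_{\catk}BT^K$ from local maps $BT^\alpha\to BT^n\to BSO(2n)\p$, rigidified by the higher-limit vanishing of Proposition~\ref{vhl}. Taking the geometric construction of Section~3 as the source of $\xi$ is a shortcut that the paper itself sanctions in Remark~\ref{defxi}.

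One justification in your coherence step is wrong, however: you claim proper faces require no orientation choice "because $H^{2n}(BT^\alpha;\Z)=0$ when $|\alpha|<n$," but this group is nonzero for every $\alpha\neq\emptyset$ (it is the degree-$2n$ piece of a polynomial ring). The correct mechanism, which the paper uses, is that a coordinatewise inclusion $T^\alpha\hookrightarrow T^n$ with $|\alpha|<n$ leaves a spare coordinate on which a compensating conjugation may be placed; since $W_{SO(2n)}\cong(\Z/2)^{n-1}\rtimes\Sigma_n$ consists of permutations and \emph{even} sign changes, all choices of conjugations on $T^\alpha$ then become conjugate in $SO(2n)$, hence the induced maps of classifying spaces are homotopic. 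Equivalently, the pulled-back Euler class $\sigma_n(v_{i_1},\ldots,v_{i_t},0,\ldots,0)$ vanishes because $t<n$, not because the ambient cohomology group does. This is precisely why the sign freedom concentrates on faces of order $n$, matching your (correct) closing observation about $W_{SO(2n)}$.
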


The uniqueness question can be handled simultaneously for the different
cases. For $n\leq r$ we denote by  $G_r$ one of the groups $U(r)$, $SO(2r)$ or $SO(2r+1)$ and
by $\gamma_r \colon DJ(K)\lra BG_r$ one of the maps
$\xi\oplus \C^{r-n}$, $\rho_{\omega}\oplus\R^{2(n-r)}$ or $\xi_\R \oplus \R^{2(r-n)+1}$.

\begin{thm} \label{globaluniqueness}
Let $K$ be an $(n-1)$-dimensional finite simplicial complex.
Then, a map $\beta_r: DJ(K)\lra BG_r$ is homotopic to $\gamma_r$
if and only if $H^*(\beta_r;\Q)=H^*(\gamma_r;\Q)$.
\end{thm}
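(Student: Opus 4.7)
The plan is to prove both directions simultaneously for the three possible choices of $G_r$, using the strategy already advertised in the introduction to Section \ref{global uniqueness}: reduce to rational and $p$-adic uniqueness via Sullivan's arithmetic square. The ``only if'' direction is formal since cohomology is a homotopy invariant, so the substantive content is the ``if'' direction. First I would observe that the arithmetic square for the target $BG_r$ reduces the homotopy classification of maps $DJ(K)\to BG_r$ inducing a prescribed rational cohomology homomorphism to the analogous problem for each $p$-completion $BG_r\com_p$ separately, together with an automatically controlled rational compatibility condition. The rational piece is immediate: each classifying space $BG_r$ is rationally a product of Eilenberg-MacLane spaces $K(\Q,2d_i)$ indexed by the degrees of the Weyl-invariant generators listed in Section \ref{global uniqueness}, and two maps into such a product are homotopic iff they agree on rational cohomology. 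Consequently the entire problem is localized at each prime $p$.

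Next I would replace $DJ(K)$ with its homotopy colimit model $hc(K)=\hocolim_{\catk}BT^K$ from Section \ref{models}. For any target space $Y$, the Bousfield-Kan spectral sequence for the tower of mapping spaces $\map(B(*,\catk,BT^K),Y)$ gives the standard obstruction theory
\begin{equation*}
E_2^{s,t}=\lim{}^{\!s}_{\catk\op}\,\pi_t\,\map\bigl(BT^{(-)},Y\bigr)\ \Longrightarrow\ \pi_{t-s}\,\map\bigl(hc(K),Y\bigr).
\end{equation*}
Taking $Y=BG_r\com_p$ and comparing two maps $\beta_r,\gamma_r$ that agree rationally, one obtains obstructions to a homotopy $\beta_r\simeq\gamma_r$ living in $\lim^{s}$ of homotopy groups of mapping spaces $\map(BT^\alpha,BG_r\com_p)$. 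The pointwise uniqueness needed at $E_2^{0,0}$ follows from the classical classification of maps $BT^\alpha\to BG_r\com_p$ (maps are detected by admissible homomorphisms on maximal tori up to Weyl conjugacy, so maps inducing equal rational cohomology homomorphisms are $p$-adically homotopic), and the higher $E_2^{s,s}$ terms are precisely the higher derived inverse limits the paper announces will be computed in Section 5 and applied in Section \ref{p-adic case}.

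The main obstacle is therefore showing that these higher inverse limits $\lim^{s}_{\catk\op}\pi_t\map(BT^{(-)},BG_r\com_p)$ vanish (or at least vanish in the bidegrees that could obstruct or parametrize a homotopy). The standard strategy, which the mention of ``filtrations of functors'' in Section 5 suggests, is to filter the functor $\alpha\mapsto\pi_t\map(BT^\alpha,BG_r\com_p)$ so that the associated graded consists of atomic functors on $\catk\op$ of the form $\alpha\mapsto F(\alpha)$ with controlled support; for such functors the higher limits reduce to simplicial cohomology of links and stars inside $K$, which are contractible or carry the appropriate vanishing by the standard acyclicity arguments for $\catk\op$. Once this vanishing is in place, the Bousfield-Kan spectral sequence collapses onto the $E_2^{0,0}$-term and the pointwise uniqueness on each $BT^\alpha$ propagates to a global homotopy $\beta_r\simeq\gamma_r$, completing the proof.
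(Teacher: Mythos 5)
Your proposal takes essentially the same route as the paper: Sullivan's arithmetic square to reduce to the $p$-adic case, rational uniqueness via $BG_r$ being rationally a product of even Eilenberg-MacLane spaces, and $p$-adic uniqueness via the homotopy colimit model $hc(K)$, Bousfield-Kan/Wojtkowiak obstruction theory, and the filtration-by-atomic-functors computation of the higher derived limits $\lim^{j}\Pi_i^{G_r}$. The one place where your sketch is looser than the paper is the ``automatically controlled rational compatibility condition'': the paper handles this by proving that $[DJ(K),BG_r]\to[DJ(K),BG_r^\wedge]$ is injective (Theorem \ref{mono}), using that the fiber of $BG_r\to BG_r^\wedge$ has purely odd homotopy groups while $H^*(DJ(K);\Z)$ is concentrated in even degrees, so the obstruction groups vanish; this injectivity is exactly what makes the reduction to the $p$-adic problem sufficient.
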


The above two results are versions  of our first two main theorems.

{\it Proof of Theorem \ref{thm1} and Theorem \ref{thm2}:}
Theorem \ref{thm1} is contained in Theorem \ref{globalexistence} and Theorem \ref{thm2}(i) and (iii) in
Theorem \ref{globaluniqueness}. The non vanishing of $c_n(\xi)$ and
$p_n(\xi_\R)$ follows from the fact that $K$ contains a face of dimension $n-1$.
The inequalities $r\geq n$ and $s\geq 2n$ follow from the non vanishing of
these Chern and Pontrjagin classes.
Since $DJ(K)$ is simply connected, every real vector bundle $\rho\da DJ(K)$ can be given an
orientation.
If $\dim \rho > 2n$, then $e(\rho)=0=e(\xi_\R \oplus \R^{s-2n})$ and if $\dim \rho=2n$ then
the same formula holds for $\rho\oplus \R$ and $\xi_\R \oplus \R$.
In both cases, Theorem \ref{globaluniqueness} establishes the desired isomorphism
between the vector bundles.
\qed

As already mentioned the proofs of the above theorems are based on an
arithmetic square argument and
rational and $p$-adic versions of the above statements.
In the rest of this section we will reduce the proof of Theorem \ref{globalexistence}
and Theorem \ref{globaluniqueness} to their
$p$-adic versions.

For a  topological space $X$
we denote by $X_0$ the rationalization, by $X\p$ the $p$-adic completion
in the sense of Bousfield and Kan, by $X^\wedge\letbe \prod_p X\p$
the product of all p-adic completions, and by $X_{\af}$
the finite adele type of $X$, which is the formal completion of $X_0$ or the rationalization
of $X^\wedge$.
All spaces under consideration are simply connected.In this case
Sullivan's arithmetic square
$$
\diagram
X \rto \dto & X^\wedge \dto \\
X_0 \rto & X_{\af}
\enddiagram
$$
is a homotopy pull back diagram.

\begin{thm} \label{rationalexistence}
Let $K$ be an $(n-1)$-dimensional finite simplicial complex and
and $\omega\colon M_K\lra \{\pm 1\}$ a function. Then there exist realizations
$$
\xi_0 \colon  DJ(K) \lra BU(n)_0
\ \text{ and }\
(\rho_{\omega})_0\colon DJ(K) \lra BSO(2n)_0
$$
of $f\otimes \Q$ and
$g_{\omega}\otimes \Q$.
\end{thm}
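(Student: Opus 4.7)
The plan is to reduce the theorem to the classical computation of the rational homotopy types of $BU(n)$ and $BSO(2n)$ and then to specify the required cohomology classes using the universal property of Eilenberg--MacLane spaces. Since $\pi_*(U(n))\otimes\mathbb{Q}$ is concentrated in odd degrees $1,3,\ldots,2n-1$, each of rank one, one has
\begin{equation*}
BU(n)_0 \simeq \prod_{i=1}^n K(\mathbb{Q},2i).
\end{equation*}
Similarly, from the rational equivalence $SO(2n)\simeq_{\mathbb{Q}} S^3\times S^7\times\cdots\times S^{4n-5}\times S^{2n-1}$ one obtains
\begin{equation*}
BSO(2n)_0 \simeq \prod_{i=1}^{n-1} K(\mathbb{Q},4i) \times K(\mathbb{Q},2n).
\end{equation*}
Under these rational equivalences the fundamental classes of the factors are identified, respectively, with the Chern classes $c_1,\ldots,c_n$ and with the Pontrjagin classes $p_1,\ldots,p_{n-1}$ together with the Euler class $e_n$.

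Next I would use that a homotopy class of maps from $DJ(K)$ into a product $\prod_j K(\mathbb{Q},n_j)$ of rational Eilenberg--MacLane spaces is the same datum as a tuple of rational cohomology classes on $DJ(K)$ of degrees $n_j$. For $\xi_0$ I assign to $c_i$ the element $\sigma_i(v_1,\ldots,v_m)\in H^{2i}(DJ(K);\mathbb{Q})$, which is by definition the image of $c_i$ under $f\otimes\mathbb{Q}$. For $(\rho_\omega)_0$ I assign to $p_i$ the element $\sigma_i(-v_1^2,\ldots,-v_m^2)\in H^{4i}(DJ(K);\mathbb{Q})$ for $1\le i\le n-1$, and to $e_n$ the element $e_\omega\in H^{2n}(DJ(K);\mathbb{Q})$ supplied by the function $\omega$. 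Because $\mathbb{Q}[c_1,\ldots,c_n]$ and $\mathbb{Q}[p_1,\ldots,p_{n-1},e_n]$ are polynomial on the chosen generators, the resulting rational cohomology homomorphisms automatically agree with $f\otimes\mathbb{Q}$ and $g_\omega\otimes\mathbb{Q}$ on the entire rings.

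The one subtle point I would address is the integral relation $e_n^2=(-1)^n p_n$ in $H^*(BSO(2n);\mathbb{Z})$, which one might fear imposes a constraint on the choice of Euler class. Rationally, however, $BSO(2n)_0$ is a product of Eilenberg--MacLane spaces, so $p_n$ is \emph{not} an independent generator of $H^*(BSO(2n);\mathbb{Q})$ but is determined by the $e_n$-factor as $(-1)^n e_n^2$. No obstruction is therefore imposed on the space-level map, and the internal consistency of the assignment defining $g_\omega$ itself reduces to verifying the equality $e_\omega^2=(-1)^n p_n(K)$, which is precisely the content of Lemma \ref{eulerclass}. I therefore expect no substantial obstacle: the whole proof is driven by the fact that after rationalization the two classifying spaces become products of Eilenberg--MacLane spaces, so existence is immediate once the correct classes in $\mathbb{Q}[K]$ have been named.
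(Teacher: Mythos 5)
Your proposal is correct and takes the same approach as the paper: the paper's entire proof is the one-line observation that $(BG_r)_0$ is a product of rational Eilenberg--MacLane spaces, and you are simply spelling out what that observation buys (the explicit rational splittings of $BU(n)_0$ and $BSO(2n)_0$, the identification of a map into such a product with a tuple of rational cohomology classes, and the consistency check $e_\omega^2=(-1)^np_n(K)$ needed for $g_\omega$ to be well defined). The only small imprecision is attributing that consistency check to Lemma \ref{eulerclass}, which actually goes the other way (every square root is some $e_\omega$); that $e_\omega$ \emph{is} a square root is the easier direct computation, but this does not affect the correctness of your argument.
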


\begin{thm} \label{rationaluniqueness}
Let $K$ be an $(n-1)$-dimensional finite simplicial complex.
A map $(\beta_r)_0\colon DJ(K) \lra (BG_r)_0$ is homotopic to $(\gamma_r)_0$ if and only
if $H^*((\beta_r)_0;\q)=H^*(\gamma_r;\Q)$.
\end{thm}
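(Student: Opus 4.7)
The ``only if'' direction is tautological: homotopic maps induce identical rational cohomology homomorphisms. So the work is entirely in the converse. My plan would be to reduce to the standard fact that maps into a product of rational Eilenberg--MacLane spaces are classified by their induced cohomology, by first splitting $(BG_r)_0$ accordingly.

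For each group $G_r\in\{U(r), SO(2r), SO(2r+1)\}$ the rational cohomology $H^*(BG_r;\Q)$ is a polynomial algebra on a finite set of generators sitting in even degrees (namely the Chern classes for $U(r)$, the Pontrjagin classes for $SO(2r+1)$, and the Pontrjagin classes together with the Euler class for $SO(2r)$). Since $BG_r$ is simply connected and its rational cohomology is free polynomial concentrated in even degrees, all rational Postnikov invariants vanish and the rationalisation splits as
\[
(BG_r)_0 \;\simeq\; \prod_{i=1}^k K(\Q,\, 2n_i),
\]
where $2n_1,\ldots,2n_k$ are the degrees of the chosen generators $x_1,\ldots,x_k$. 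This splitting is the only non-formal input.

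Combining it with the representability $[X, K(\Q,n)] = H^n(X;\Q)$ for simply connected $X$ of finite type (and $\djs(K)$ certainly qualifies) yields a natural bijection
\[
[\djs(K), (BG_r)_0] \;\cong\; \prod_{i=1}^k H^{2n_i}(\djs(K);\Q), \qquad \beta \longmapsto \bigl(H^*(\beta;\Q)(x_i)\bigr)_i,
\]
where we identify $H^*(BG_r;\Q)$ with $H^*((BG_r)_0;\Q)$ via the rationalisation map. Since an algebra homomorphism out of a polynomial algebra is determined by its values on generators, the tuple on the right is exactly the full datum of the homomorphism $H^*(\beta;\Q)$. Hence two maps $\djs(K)\lra (BG_r)_0$ are homotopic iff they agree on rational cohomology, and applying this to $(\beta_r)_0$ and $(\gamma_r)_0$ proves the theorem.

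The main (and essentially only) obstacle is the rational splitting of $(BG_r)_0$, but this is a textbook consequence of having free polynomial rational cohomology in even degrees. No compatibility with group structure or orientation conventions has to be tracked, so the argument is uniform across the three families of groups and the three corresponding choices of $\gamma_r$.
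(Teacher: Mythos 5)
Your proof is correct and is essentially the paper's own argument: the paper disposes of Theorems \ref{rationalexistence} and \ref{rationaluniqueness} in one line by invoking exactly the fact that $(BG_r)_0$ is a product of rational Eilenberg--MacLane spaces, and you have simply spelled out why that fact holds and how it yields the bijection.
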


{\it Proof of both theorems:}
All claims follow from the fact that $(BG_r)_0$ is a product of
rational Eilenberg-Maclane spaces.
\qed

\medskip

For abbreviation, we define $H^*_{\q\p}(-)\letbe H^*(-;\z\p)\otimes_{\z\p}\Q\p$.

\begin{thm} \label{padicexistence}
Let $p$ be a prime,
$K$ an $(n-1)$-dimensional finite simplicial complex,
and $\omega\colon M_K\lra \{\pm 1\}$ a function.
Then there exist realizations
$$
\xi\p\colon  DJ(K) \lra BU(n)\p
\ \text{ and }\
(\rho_{\omega}){}\p\colon DJ(K)\lra BSO(2n)\p
$$
of $f \otimes_{\Z} \Q\p$
and
$g_{\omega}\otimes_{\Z}\Q\p$.
\end{thm}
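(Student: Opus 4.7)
The plan is to use the homotopy colimit model $DJ(K) \simeq \hocolim_{\catk} BT^K$ from Section \ref{models} and to reduce the problem to building a homotopy-coherent natural transformation from the diagram $BT^K \colon \catk \to \spaces$ to the constant diagram at $BG^\wedge_p$, where $G = U(n)$ or $SO(2n)$. A map out of a homotopy colimit is precisely such coherent data, so it is enough to produce, for each face $\alpha$, a component map $\phi_\alpha \colon BT^\alpha \to BG^\wedge_p$, together with the compatibility homotopies and higher coherences under face inclusions.

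For the pointwise part, I would use concrete torus embeddings. For each face $\alpha$ (which automatically satisfies $|\alpha| \le n$ since $\dim K = n-1$), fix a torus homomorphism $T^\alpha \to T^n \hookrightarrow G$: for $G = U(n)$ this is the standard coordinate embedding, giving $\phi_\alpha \colon BT^\alpha \to BU(n) \to BU(n)^\wedge_p$ which on cohomology realizes $h^\alpha \circ f$. For $G = SO(2n)$ use the block-diagonal embedding into $SO(2)^n \subset SO(2n)$; on a maximal face $\mu$ the choice of orientation in each $SO(2)$-block (composing selectively with a reflection) is used to realize the sign $\omega(\mu)$ in the Euler-class contribution, and on non-maximal faces the Euler class restricts to zero, so any compatible choice works.

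The assembly into a coherent natural transformation is then controlled by the standard obstruction theory for diagrams indexed by $\catk^{op}$. The successive obstructions live in the higher inverse limits $\lim^{i+1}_{\catk^{op}} \pi_i\bigl(\map(BT^-, BG^\wedge_p)_{\phi_-}\bigr)$, with the indeterminacy of extensions controlled by $\lim^i$ of the same functor. After $p$-completion, Dwyer--Zabrodsky and Jackowski--McClure--Oliver type results identify the component of $\map(BT^\alpha, BG^\wedge_p)$ at $\phi_\alpha$ with $BC_G(\phi_\alpha)^\wedge_p$, whose homotopy groups are finitely generated $\Z^\wedge_p$-modules that can be computed explicitly.

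The main obstacle, which is precisely what Section \ref{p-adic case} handles using the filtration framework of Section 5, is showing that these higher limits vanish in positive degrees. I would filter the functor on $\catk^{op}$ by the cardinality of the face, so that each associated graded piece becomes a sum of contributions concentrated at a single face; over a single face the corresponding functor is representable, so its derived limits vanish above degree zero. The resulting spectral sequence then collapses to produce the required vanishing, the partial coherent transformation extends through every stage, and its adjoint is the desired $p$-completed map from $DJ(K)$.
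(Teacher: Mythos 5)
Your overall framework matches the paper's: use the homotopy colimit model $hc(K)=\hocolim_{\catk}BT^K$, build component maps $\phi_\alpha\colon BT^\alpha\to BG^\wedge_p$ from explicit torus homomorphisms (with conjugations on coordinates realizing $\omega$ in the $SO(2n)$ case), identify mapping space components with completed classifying spaces of centralizers, and then run Bousfield--Kan/Wojtkowiak obstruction theory over $\catkop$ with obstruction groups $\lim^{i+1}\Pi_i$ and indeterminacy in $\lim^i\Pi_i$. All of that is exactly what Section~\ref{p-adic case} does.

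However, your vanishing argument contains a genuine error. You filter by cardinality of the face so that the associated graded pieces are sums of functors concentrated at a single face, and then assert that ``over a single face the corresponding functor is representable, so its derived limits vanish above degree zero.'' This is false. A functor $\Phi_\alpha$ on $\catkop$ concentrated at a single face $\alpha$ (an atomic functor) is not projective, and its higher derived limits do not vanish: by the computation the paper cites from \cite{nora2}, $\lim^i\Phi_\alpha\cong\widetilde H^{i-1}(\link_K(\alpha);\Phi(\alpha))$, which is in general nonzero (e.g.\ if $\alpha$ is a vertex whose link is a sphere). Your spectral sequence therefore does not collapse for purely formal reasons. The actual argument needs two additional ingredients beyond the filtration: (a) the dimension bound $\dim\link_K(\alpha)\le n-|\alpha|-1$, which gives $\lim^i\Phi_s=0$ only for $i\ge n-s+1$ (Lemma~\ref{atomic}(i)); and (b) specific constancy/vanishing properties of the homotopy-group functors $\Pi_i^{G_r}$ coming from the known homotopy groups of $BU(t)$ and $BSO(t)$ in a range (Lemma~\ref{necessary}), which allow a comparison with a constant functor (Lemma~\ref{atomic}(iii), using that $\catkop$ has a terminal object) and a separate treatment of the $i=2$ case via the extension $0\to\hat\Pi_2^{G_r}\to\Pi_2^{G_r}\to\Psi_2\cong H^2(BT^K;\Z^\wedge_p)\to 0$, where $\lim^j H^2(BT^K;\Z^\wedge_p)=0$ for $j\ge 2$ is a nontrivial input from \cite{nora1}. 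Without these, the required vanishing $\lim^j\Pi_i^{G_r}=0$ for $j\ge i\ge1$ (Proposition~\ref{vhl}) is not established, and your proof has a gap precisely at its crucial step.

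A smaller point: for $G=SO(2n)$ the compatibility of your choices of $\phi_\alpha$ under face inclusions is not automatic from ``the Euler class restricts to zero.'' One must check that the restrictions agree up to conjugation in $SO(2n)$, and since the Weyl group $W_{SO(2n)}$ allows only even numbers of sign changes on the $n$ coordinates, this requires the observation that for $|\alpha|<n$ an unused coordinate can always be used to change the parity. The paper spells this out; your proposal glosses over it.
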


\begin{thm} \label{padicuniqueness}
Let $p$ be a prime and
$K$ an $(n-1)$-dimensional finite simplicial complex.
A map $(\delta_r)\p\colon DJ(K)\lra (BG_r)\p$ is homotopic
to the completion $(\gamma_r)\p $ of $\gamma_r$ if and only if
$H^*_{\Q\p}((\delta_r)\p)=H^*_{\Q\p}((\gamma_r)\p)$.
\end{thm}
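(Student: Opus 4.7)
The plan is to model $DJ(K)$ as $hc(K) = \hocolim_{\catk} BT^K$ (Section \ref{models}) and to invoke the Bousfield--Kan mapping-space spectral sequence
$$
E_2^{s,t} = \lim{}^{s}_{\catkop} \pi_t\bigl(\map(BT^{?}, (BG_r)\p)_{(\gamma_r)\p|_?}\bigr) \Longrightarrow \pi_{t-s}\bigl(\map(hc(K), (BG_r)\p)_{(\gamma_r)\p}\bigr).
$$
The forward implication is trivial, so the work lies in the converse. My strategy is first to show that the restrictions of $(\delta_r)\p$ and $(\gamma_r)\p$ to each $BT^\alpha$ are homotopic, then to patch these face-wise homotopies into a global one using vanishing of higher inverse limits over $\catkop$.

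For the face-wise step, the theorems of Dwyer--Zabrodsky and Jackowski--McClure--Oliver identify the components of $\map(BT^\alpha, (BG_r)\p)$ with Weyl orbits of homomorphisms $T^{|\alpha|} \to G_r$, equivalently with Weyl orbits of tuples of characters, and each component is the $p$-completion of the classifying space of the corresponding centralizer. In particular two maps $BT^\alpha \to (BG_r)\p$ are homotopic iff they induce the same map on $H^*_{\Q\p}$. Combining this with the injection $H^*(DJ(K);\Z\p) \hookrightarrow \bigoplus_{\alpha \in K} H^*(BT^\alpha;\Z\p)$ from Remark \ref{limitzk}, the hypothesis $H^*_{\Q\p}((\delta_r)\p) = H^*_{\Q\p}((\gamma_r)\p)$ forces $(\delta_r)\p|_{BT^\alpha} \simeq (\gamma_r)\p|_{BT^\alpha}$ for every face $\alpha$, compatibly over $\catkop$. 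Thus $(\delta_r)\p$ lies in the same component of $\map(hc(K),(BG_r)\p)$ as $(\gamma_r)\p$ at the level of $E_2^{0,0}$.

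The patching step reduces to the vanishing $\lim^{s}_{\catkop}\pi_{t}\map(BT^{?},(BG_r)\p)_{(\gamma_r)\p|_?} = 0$ on the diagonal contributing to $\pi_0$, i.e.\ for $s \geq 1$ and $t = s$. The coefficient functor takes values in finitely generated $\Z\p$-modules---the $p$-completed homotopy groups of classifying spaces of compact-Lie centralizers---and the general machinery of Section 5 equips it with a filtration by the skeleta of $\catkop$ whose layers are supported on individual faces and have directly computable higher limits. The bound $\dim K = n-1$ on the cohomological dimension of $\catkop$ combined with this filtration reduces the vanishing to a face-by-face calculation on the layers. This higher-limit computation, occupying Section \ref{p-adic case}, is the main technical obstacle; once it is in place the spectral sequence forces $(\delta_r)\p \simeq (\gamma_r)\p$, completing the proof.
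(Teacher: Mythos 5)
Your proposal matches the paper's proof in all essentials: reduce to the homotopy colimit model, use the rigidity of maps out of $BT^\alpha$ to $p$-completed classifying spaces to get face-wise agreement, then patch via vanishing of higher derived limits of the homotopy-group functors (the paper's Proposition \ref{vhl}), which is exactly what underlies Corollary \ref{restrictiontotori}. One small inaccuracy: the injection from Remark \ref{limitzk} is not what forces $(\delta_r)\p|_{BT^\alpha}\simeq(\gamma_r)\p|_{BT^\alpha}$ -- face-wise cohomological agreement follows immediately by restricting the global hypothesis, and the injection is instead used in the existence direction (Theorem \ref{padicexistence}) to verify the characteristic classes of the constructed map.
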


The proof of the last two theorems will be postponed to Section \ref{p-adic case}.

\medskip

The following statement is necessary for the proof of Theorem \ref{globaluniqueness}.

\begin{thm} \label{mono}
The map
$$
[DJ(K),BG_r] \lra [DJ(K),BG_r\com]
$$
is a monomorphism.
\end{thm}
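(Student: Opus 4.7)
The plan is to apply Sullivan's arithmetic square
\[
\diagram
BG_r \rto \dto & BG_r\com \dto \\
(BG_r)_0 \rto & (BG_r)_{\af}
\enddiagram
\]
which is a homotopy pullback because $BG_r$ is simply connected. Given two maps $\alpha,\beta\colon DJ(K) \lra BG_r$ with $\alpha\com \simeq \beta\com$, the goal is to deduce $\alpha \simeq \beta$. The plan is to first transfer the agreement to the rationalizations $\alpha_0,\beta_0$, and then to glue the two pieces of data together via the pullback structure.

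For the first transfer, note that $\alpha\com \simeq \beta\com$ implies $H^*_{\Q\p}(\alpha\p) = H^*_{\Q\p}(\beta\p)$ for every prime $p$. Since $H^*(DJ(K);\Z) \cong \Z[K]$ is torsion-free and of finite type in each degree, the natural map $H^*(DJ(K);\Q) \lra H^*(DJ(K);\Q\p)$ is injective for every prime $p$, so $H^*(\alpha;\Q) = H^*(\beta;\Q)$. Because $(BG_r)_0$ is a product of rational Eilenberg--Maclane spaces -- the same input that powers Theorem \ref{rationaluniqueness} -- a homotopy class of maps into $(BG_r)_0$ is determined by its action on rational cohomology. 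This yields $\alpha_0 \simeq \beta_0$.

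It remains to glue the agreements on $\com$ and on $0$ into an integral homotopy. The homotopy pullback produces a Mayer--Vietoris type exact sequence on $\pi_0$ of function spaces, in which the residual obstruction to $\alpha \simeq \beta$ lives in $\pi_1\map(DJ(K),(BG_r)_{\af})$ modulo the image of the corresponding fundamental groups of $\map(DJ(K),BG_r\com)$ and $\map(DJ(K),(BG_r)_0)$. A Postnikov-tower induction along $BG_r$ reduces the vanishing of this residual obstruction to the injectivity of the natural map $H^n(DJ(K);\pi_n(BG_r)) \lra H^n(DJ(K);\pi_n(BG_r)\com)$ for every $n$. These maps are injective because $\pi_n(BG_r)=\pi_{n-1}(G_r)$ is a finitely generated abelian group ($G_r$ being a compact Lie group), $DJ(K)$ has cohomology of finite type, and $A \hookrightarrow A\com = A\otimes\hat{\Z}$ is injective for every finitely generated abelian $A$.

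The main obstacle is precisely this last assembly step: ensuring that agreement after rationalization and after profinite completion really propagates to an agreement integrally, with no ``phantom map'' interference in the middle. The finite-type hypotheses on both $DJ(K)$ and $BG_r$ are exactly what rule such phantoms out; without either, the arithmetic square analysis would in general leave a non-trivial kernel.
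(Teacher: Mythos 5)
Your proposal and the paper's proof both start from the arithmetic square, and the reduction from $\alpha\com \simeq \beta\com$ to $\alpha_0 \simeq \beta_0$ is correct and useful. You also correctly locate the residual obstruction in $\pi_1\map(DJ(K),(BG_r)_{\af})$ modulo the images of the $\pi_1$'s from the completion and rationalization. However, at precisely this point the argument goes off the rails. The appeal to ``a Postnikov-tower induction along $BG_r$'' that supposedly reduces the vanishing of this obstruction to injectivity of $H^n(DJ(K);\pi_n(BG_r)) \to H^n(DJ(K);\pi_n(BG_r)\com)$ is not justified, and it is not the right reduction: the residual obstruction is a class in a mapping space into $(BG_r)_{\af}$, not a difference class detected stage-by-stage through the Postnikov tower of $BG_r$. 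Moreover, the dismissal of ``phantom map interference'' via finite type is insufficient --- finite type of source and target does not by itself preclude phantom maps (the classical example $\CPI\to S^3$ has both source and target of finite type). So even granting the Postnikov reduction, one would still have to confront a potential $\lim^1$-term, and the stated hypotheses do not make it vanish.

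What is missing is the parity observation that makes the paper's proof work. Because $BG_r$ is a compact connected Lie group, $(BG_r)_0$ is a product of rational Eilenberg--MacLane spaces in \emph{even} degrees, hence so is $(BG_r)_{\af}$. Therefore $\pi_1\map(DJ(K),(BG_r)_{\af})$ is a product of groups of the form $H^{\text{odd}}(DJ(K);-)$, and since $H^*(DJ(K);\Z)\cong\Z[K]$ is free and concentrated in even degrees, all these groups are zero; the residual obstruction vanishes outright, with no Postnikov induction and no phantom-map issue. The paper packages this slightly differently but equivalently: since the arithmetic square is a homotopy pullback, the fiber $F$ of $BG_r \to BG_r\com$ agrees with the fiber of $(BG_r)_0 \to (BG_r)_{\af}$, so $\pi_i(F)=0$ for $i$ even; the obstructions to lifting a homotopy in $BG_r\com$ to one in $BG_r$ lie in $H^i(DJ(K);\pi_i(F))$, and these all vanish because $\pi_*(F)$ is odd while $H^*(DJ(K))$ is even. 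Either formulation closes the gap; the Postnikov-tower detour does not.
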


\begin{proof}
Since the arithmetic square for $BG_r$ is a pull back, the homotopy fiber $F$
of $BG_r\lra BG_r\com$ is
equivalent to the homotopy fiber
of the rationalization
$(BG_r)_0 \lra (BG_r)_{\af}$. Since $(BG_r)_0$ is a product of rational
Eilenberg-MacLane spaces of even degrees, $\pi_i(F)=0$ for $i$ even.
The obstruction groups
for lifting homotopies between maps $DJ(K) \lra BG_r^\wedge$ to $BG_r$
are given by $H^*(DJ(K);\pi_*(F))$. All these obstruction groups vanish,
since $H^*(DJ(K);\z)$
is concentrated in even degrees.
\end{proof}

\medskip

{\it Proof of Theorem \ref{globalexistence}.}
Since the rationalization  $BU(n)_0$ is a product of rational Eilenberg-MacLane spaces
of even degrees, the same holds for the finite adele type $BU(n)_{\af}$.
Up to homotopy,  maps into $BU(n)_{\af}$ are determined by cohomological
information. Since $\xi_0$ and $\xi\p$ realize $f\otimes \Q$
respectively $f\otimes \Q\p$, the two compositions
$$
DJ(K) \larrow{\xi_0} BU(n)_0\lra BU(n)_{\af}
$$
and
$$
DJ(K) \larrow{\prod \xi\p} \prod_p BU(n)\p=BU(n)\com \lra BU(n)_{\af}
$$
are homotopic.
Using the homotopy pull back property of the arithmetic square for $BU(n)$ we can construct a map
$\xi\colon  DJ(K) \lra BU(n)$  with the desired cohomological property.
This proves the claim for $BU(n)$..
For $BSO(2n)$ we can argue analogously.
\qed

\medskip

{\it Proof of Theorem \ref{globaluniqueness}.}
Since $H^*(DJ(K);\z)$ is torsion free, the map $H^*(\gamma_r;\q)$ determines
$H^*(\gamma_r;\z)$ as well as $H^*(\rho;\z\p)\otimes \Q\p$.
The homotopical uniqueness of $\gamma_r$ follows from
Theorem \ref{padicuniqueness} and Theorem  \ref{mono}.
\qed

{\it Proof of Theorem \ref{thm3}.}
If the total Pontrjagin class $p(\rho)$ of a oriented vector bundle
$\rho\colon DJ(K) \lra BSO(2n)$ equals $p(K)$, then $e(\rho)^2=(-1)^n p_n(K)$. By
Lemma \ref{eulerclass} there exists a function $\omega\colon M_K\lra \{\pm 1\}$ such that
$e(\rho)=e_\omega$ and by Theorem \ref{thm2} this implies that $\rho$ and $\rho_\omega$
are isomorphic.
\qed

\bigskip

\section{higher limits} \label{higher limits}

The proofs of Theorem \ref{padicexistence} and Theorem \ref{padicuniqueness}
involve the calculation of the some higher derived limits for covariant functors defined
on the opposite category $\catkop$ of $\catk$. For a definition and properties of higher derived
limits see \cite{boka} or \cite{oliver}.
We will drop $\catkop$ from the notation of limits.

Let $\ab$ denote the category of abelian groups. We would like to construct a
filtration for a functor
$\Phi\colon \catkop \lra \ab$. For
$0\leq s\leq n$ we define
functors $\Phi_{\leq s}, \Phi_s\colon  \catkop \lra \ab$ by
$$
\Phi_{\leq s}(\alpha)\letbe
\begin{cases}
\begin{array}{ll}
\Phi(\alpha) & \text{ for } | \alpha|\leq s \\
0 & \text{ for } | \alpha| > s
\end{array}
\end{cases}
\ \ \
\Phi_s
(\alpha)\letbe
\begin{cases}
\begin{array}{ll}
\Phi(\alpha) & \text{ for } | \alpha| = s \\
0 & \text{ for } | \alpha| \neq s
\end{array}
\end{cases}
$$
Since for $|\alpha| < | \beta|$ there is no arrow
$\alpha \ra \beta$
in $\catkop$, both functors are well defined. Moreover,
we have $\Phi_{\leq n}=\Phi$.
There exist short
exact sequences of functors
$$
1\lra \Phi_{\leq s-1} \lra \Phi_{\leq s}\lra \Phi_{s} \lra 1
$$
which induce long exact sequences
$$
...\ra \lim{}^{i-1}\, \Phi_{s} \ra \lim{}^{i}\, \Phi_{\leq s-1}
\ra
\lim{}^{i}\, \Phi_{\leq s}
\ra \lim{}^{i}\, \Phi_{s}
\lra ...
$$
of higher derived limits.

\begin{lem}\label{atomic}$\ \ \ \ \ $
\nz
(i) $\lim{}^{i}\, \Phi_s=0$ for $i\geq n -s +1$.
\nz
(ii)
$\lim{}^{i}\, \Phi_{\leq s} \lra \lim{}^{i}\, \Phi$
is an isomorphism for $s\geq n-i+1 $.
\nz
(iii)
If $i\geq 1$ and if $\Phi(\beta) \cong \Phi(\alpha)$ for $\alpha \subset \beta$ and
$| \beta|\leq n-i+1$ then
$\lim{}^{i}\, \Phi =0$.
\end{lem}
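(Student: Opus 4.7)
All three parts will be proved by analyzing the standard cobar complex
\[
C^j(\catkop, \Phi) \;=\; \prod_{\alpha_0 \to \cdots \to \alpha_j} \Phi(\alpha_j),
\]
whose cohomology computes $\lim^j \Phi$ and whose product ranges over $j$-chains of non-identity composable morphisms in $\catkop$. Every such morphism is a strict reverse inclusion $\beta \supsetneq \alpha$, so each chain has strictly decreasing cardinalities bounded above by $n$, the maximal cardinality of a face of $K$.

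For (i), the summand $\Phi_s(\alpha_j)$ vanishes unless $|\alpha_j| = s$. Strict decrease then forces $|\alpha_0| \geq s + j$, and $|\alpha_0| \leq n$ yields $j \leq n - s$, so $C^j(\catkop, \Phi_s) = 0$ in degrees $\geq n - s + 1$. Part (ii) follows by downward induction on $s$ from the long exact sequence attached to the given short exact sequence of functors: the segment
\[
\lim{}^{i-1}\Phi_s \to \lim{}^i \Phi_{\leq s-1} \to \lim{}^i \Phi_{\leq s} \to \lim{}^i \Phi_s
\]
has outer terms that vanish by (i) whenever $s - 1 \geq n - i + 1$, which makes the middle arrow an isomorphism in this range; iterating from $s = n$ down to $s = n - i + 1$ produces (ii).

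For (iii), apply (ii) with $s := n - i + 1$ to reduce the claim to $\lim^i \Phi_{\leq s} = 0$. Let $\catk^{\leq s}$ denote the full subcategory of $\catk$ on faces of cardinality $\leq s$. Because $\emptyset$ is initial in $\catk$, the order complex of $\catk^{\leq s}$ is a cone with apex $\emptyset$ and is therefore contractible. The iso hypothesis makes $\Phi|_{\catk^{\leq s}}$ a local system on this contractible nerve, and the explicit trivialization $\varphi_\alpha := \Phi(\alpha \to \emptyset)$ gives a natural isomorphism from $\Phi|_{\catk^{\leq s}}$ to the constant functor $\underline{A}$ of value $A := \Phi(\emptyset)$; extending by zero to $\catkop$ produces $\Phi_{\leq s} \cong \underline{A}_{\leq s}$. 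It therefore suffices to show $\lim^i \underline{A}_{\leq s} = 0$. The short exact sequence $0 \to \underline{A}_{\leq s} \to \underline{A} \to \underline{A}_{>s} \to 0$ gives
\[
\lim{}^{i-1}\underline{A} \to \lim{}^{i-1}\underline{A}_{>s} \to \lim{}^i \underline{A}_{\leq s} \to \lim{}^i \underline{A},
\]
whose outer terms vanish for $i \geq 1$ because $\emptyset$ is terminal in $\catkop$, so $\catkop$ is cone-shaped and constant functors have trivial positive higher limits. The remaining middle term is killed by a second application of the chain-length bound: a length-$(i-1)$ chain ending at a face of cardinality $\geq s + 1$ would force $|\alpha_0| \geq (s+1) + (i-1) = n+1$, which is impossible.

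The main obstacle is step (iii), which fundamentally leverages the initial object $\emptyset \in \catk$ as a basepoint along which the local system on $\catk^{\leq s}$ can be coherently trivialized; without it, a locally constant functor on the poset category would have higher limits computing the cohomology of $|K|$, generally non-zero. Once the trivialization and the reduction via (ii) are in place, (iii) collapses into one further invocation of the chain-length argument that drives (i), now applied to the dual truncation $\underline{A}_{>s}$.
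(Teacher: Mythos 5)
Your proof is correct, and part (i) is handled by a genuinely more elementary route than the paper's. The paper expresses $\Phi_s$ as a product of atomic functors $\Phi_\alpha$ and then invokes the identity $\lim^i\Phi_\alpha \cong \widetilde H^{i-1}(\ell_K(\alpha);\Phi(\alpha))$ from the reference [NR2], combined with the dimension bound $\dim \ell_K(\alpha)\le n-|\alpha|-1$, to get the vanishing; you instead read the vanishing directly off the normalized cobar complex via the chain-length bound $|\alpha_0|\ge s+j$, which is self-contained and avoids citing the link formula. Your part (ii) is the same long-exact-sequence induction as the paper's, just narrated downward rather than upward. For part (iii) the paper computes $0=\lim^i\const_M\cong\lim^i(\const_M)_{\le n-i+1}\cong\lim^i\Phi_{\le n-i+1}\cong\lim^i\Phi$, applying (ii) twice (once for $\const_M$, once for $\Phi$) plus the functor isomorphism $\Phi_{\le s}\cong(\const_M)_{\le s}$; you replace the first of these two uses of (ii) by the short exact sequence $0\to\underline{A}_{\le s}\to\underline{A}\to\underline{A}_{>s}\to 0$ and a second chain-length bound, which is morally the same move.

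One small inaccuracy in (iii): you assert that both outer terms of the exact segment
$\lim^{i-1}\underline{A}\to\lim^{i-1}\underline{A}_{>s}\to\lim^i\underline{A}_{\le s}\to\lim^i\underline{A}$
vanish for $i\ge1$, but the left-hand one does not when $i=1$, since $\lim^0\underline{A}=A$. The argument survives unchanged, because exactness at $\lim^i\underline{A}_{\le s}$ only uses $\lim^i\underline{A}=0$ (so the outgoing map is zero) together with $\lim^{i-1}\underline{A}_{>s}=0$ (your chain-length bound); the term $\lim^{i-1}\underline{A}$ plays no role. You should simply drop the claim about that term.
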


\begin{proof}
For each $s$ the functor
$\Phi_s \cong \prod_{\alpha\in K, | \alpha| = s} \Phi_\alpha$
is a product of atomic functors $\Phi_\alpha$, i.e. $\Phi_\alpha(\beta)= 0$ if
$\beta\neq \alpha$. In \cite{nora2} the higher limits of atomic functors are calculated. We have
$$
\lim{}^{i}\, \Phi_\alpha=
\widetilde{H}^{i-1}(\link_K(\alpha);\Phi(\alpha)),
$$
where
$
\link_K(\alpha)\letbe \{\beta\in K \colon  \alpha\cap\beta=\emptyset, \alpha\cup\beta\in K\}
$
denotes the link of the face $\alpha$ in $K$. In particular,
$\dim(\link_K(\alpha)\leq n-| \alpha| -1$. Hence, these groups vanish
for $i\geq n-|\alpha|+1$. This proves the first part.

Since $\lim{}^{j}\, \Phi_{s+1}=0$ for $j\geq n-s$, the second part
follows from the above long exact sequences
for higher derived limits.

Let $M\letbe \Phi(\emptyset)$ and let $\const_M\colon \catkop \lra \ab$
denote the constant functor with value $M$; i.e. every face is mapped to $M$ and every morphism to
the identity.
Then, $\lim{}^i\, \const_M=0$ for $i\geq 1$
since $\catkop$ has a terminal object and is contractible \cite{boka}.
By part (i) and (ii), we get for $i\geq 1$
\begin{eqnarray*}
0=\lim{}^{i}\, \const_M & \cong &
\lim{}^{i}\, (\const_M)_{\leq n-i+1}\\
& \cong & \lim{}^{i}\, \Phi_{\leq n-i+1}
\cong \lim{}^{i}\, \Phi,
\end{eqnarray*}
which proves the third part.
\end{proof}

\section{The $p$-adic case} \label{p-adic case}

We will work with the model
$hc(K)=\hocolim_{\catk}\, BT^K$.
Again all colimits, limits and higher derived limits are
taken over $\catk$ or
the opposite category $\catkop$. For simplification we will continue to drop
these categories from the notation of limits. We will also skip the
notation ${}\p$ from completion of maps. Again, $G_r$ will denote
either $U(r)$, $SO(2r)$ or $SO(2r+1)$, and
$T^r\subset G(r)$ the standard maximal torus.

Theorem \ref{padicexistence} and Theorem \ref{padicuniqueness} state the existence and
uniqueness of particular maps $hc(K)\lra BG_r{}\p$. We need some tools to calculate
$\pi_0(map(hc(K),BG_r{}\p)$. We have
$$
\begin{array}{rl}
map(hc(K),BG_r{}\p)&\simeq map(\hocolim BT^K,BG_r{}\p)\\
 &\simeq \holim map(BT^K,BG_r{}\p).
\end{array}
$$
This establishes a map
$$
R\colon [hc(K), BG_r{}\p] \lra \lim\,  [BT^K,BG_r{}\p]
$$
where $[-,-]$ denotes the set of homotopy classes of maps.
An element $\hat\phi\in \lim [BT^K,BG_r{}\p]$ is given by a set of homotopy classes of
maps $\phi_\alpha \colon BT^\alpha\lra BG_r{}\p$. We want to know whether
$R^{-1}(\hat\phi)$ is non empty and, if this is the case, how many elements are contained in
$R^{-1}(\phi)$.
The Bousfield-Kan spectral sequence for homotopy inverse limits \cite{boka},
together with work by Wojtkowiak \cite{wojtkowiak}
clarifying the  situation in small degrees, provides an obstruction theory which
may answer both questions.
The obstruction groups are given by the higher derived limits of
the functors
$$
\Pi_i^{G_r}\colon \catkop \lra \ab
$$
which are defined by $\Pi_i^{G_r}(\alpha)\letbe \pi_i(map(BT^\alpha,BG_r{}\p)_{\phi_\alpha})$.
If $\lim^j \Pi_i^{G_r}=0$ for $j=i,\ i+1$ and all $i\geq 1$, then $R^{-1}(\phi)$
consist of exactly one element. That is there
there exists a map $\phi\colon hc(K)\lra BG_r{}\p$, uniquely determined up to homotopy,
such that the restriction
$\phi|_{BT^\alpha}$ is homotopic to $\phi_\alpha$.

For the proof of Theorem \ref{padicexistence} we need to construct particular maps
$\xi\colon hc(K) \lra BU(n)\p$ and $\rho_{\omega} \colon hc(K) \lra BSO(2n)\p$.
We will discuss the construction in detail only for the latter case.
It can be easily adapted to the first case.

For each face $\alpha=\{i_1,...i_t\}\in K$ we want to specify classes in
the polynomial algebra $\Z[\alpha]$.
For $i\leq |\alpha|$ we define $c_i(\alpha)\letbe \sigma_i(v_{i_1},...,v_{i_t})$ and
$p_i(\alpha)\letbe (-1)^i \sigma_i(v_{i_1}^2,...,v_{i_t}^2)$. If $i>|\alpha|$ we set
$c_i(\alpha)=p_i(\alpha)=0$. We also need an Euler class $e(\alpha)$ defined by
$e(\alpha)\letbe c_n(\alpha)$ if $|\alpha|=n$ and $e(\alpha)=0$ if $|\alpha|< n$.
Here, $\sigma_i$ again denotes the $i$-th elementary symmetric polynomial.

Let $\omega\colon M_K\lra \{\pm 1\}$ be a function and $e_\omega$ the
associated square root of $(-1)^np_n(K)$.
The composition
$$
\z[p_1,...,p_{n-1}, e_n] \larrow{g_{\omega}}\z[K]\larrow{h^\alpha} \z[\alpha]
$$
of $h^\alpha$, defined in Remark \ref{limitzk}, and $g_\omega$
maps $p_i$ onto $p_i(\alpha)$ and $e_n$ onto $0$ if $|\alpha|<n$ and onto
$\omega(\alpha) e(\alpha)$ if $|\alpha|=n$.
This algebraic map can be realized by a composition
$$
\rho_{\omega}{}_\alpha\colon  BT^\alpha \lra BT^n \lra BSO(2n),
$$
where the
second map is induced
by the maximal torus inclusion $T^n \subset SO(2n)$
and the first map by a
coordinate wise inclusion $T^\alpha \lra T^n$ combined with complex
conjugations on some of the coordinates.
For $|\alpha|< n$ any number of conjugations is allowed, but for $|\alpha|=n$,
we need an odd number
of complex conjugation if $\omega(\alpha)=-1$
and an even number otherwise.

The Weyl group $W_{SO(2n)}$ of $SO(2n)$ is isomorphic to
$(\z/2)^{n-1}\rtimes \Sigma_n$, where $\Sigma_n$ acts on $T^n$
via permutations of the
coordinates and $(\Z/2)^{n-1}$ via even numbers of complex conjugation on coordinates.
In fact, for each pair of coordinates, there exists an element in $(\Z/2)^{n-1}$ which induces
complex conjugation exactly
on these two coordinates.
For $i=1,2$ let $\phi_i \colon T^\alpha \lra T^n$ be a coordinatewise inclusion
combined
with $l_i$ complex conjugations. If $|\alpha|=n$ then there exists $w\in W_{SO(2n)}$
such that $\phi_1=w\phi_2$
precisely when
$l_1$ and $l_2$ are congruent modulo 2. And if $|\alpha|<n$, then there
exists a missing coordinate which we can use to pass from an even number
to an odd number of complex conjugations. In this case, there always exists $w\in W_{SO(2n)}$ such that
$\phi_1=w\phi_2$.
This shows that the underlying homomorphisms for different choices
of $\rho_{\omega}{}_\alpha$ are conjugate in $SO(2n)$ and that different choices produce homotopic
maps between the classifying spaces.

This argument also shows that, if
$\alpha \subset \beta$, the triangle
$$
\diagram
BT^\alpha \rrto \drto^{\rho_{\omega}{}_\alpha} & & BT^\beta \dlto_{\rho_{\omega}{}_\beta}\\
&BSO(2n)\p
\enddiagram
$$
commutes up to homotopy.
In particular the set of maps $\rho_{\omega}{}_\alpha$ defines an element
$\hat \rho_{\omega}\in [BT^K,BSO(2n)\p]$.
If $\lim{}^{j}\, \Pi_i^{SO(2n)}=0$ for $j=i,\ i+1$ and all $i\geq 1$ then there exists a map
$\rho_{\omega}\colon hc(K)\lra BSO(2n)\p$, uniquely determined up to homotopy such that
$\rho_{\omega}|_{BT^\alpha}\simeq \rho_\omega{}_\alpha$.

\begin{exa}
We illustrate the above construction with an example.
Let $K$ be the boundary of the 2-dimensional simplex $\Delta[3]$,
that is $\{1,2,3\}$ is the only missing face.  Then, the dimension of $K$ is 1
and the Stanley-Reisner algebra is given by
$\Z[K]=\Z[3]/(v_1v_2v_3)$. We fix an Euler class $e_\omega\letbe -v_1v_2-v_2v_3+v_1v_3$
and want to construct a map $\rho\colon DJ(K)\lra BSO(4)$ such that
$\rho^*(p_1)=p_1(K)=-(v_1^2+v_2^2+v_3^2)$
and $\rho^*(e_1)=e_\omega$.

The first factor of the  Weyl group $W_{SO(4)}\cong \z/2\times \z/2$.
 acts on the maximal torus $T^2$ by complex conjugation on both coordinates and
the second factor permutes
both coordinates. For each $\alpha\in K$ we define a map
$\iota_\alpha\colon T^\alpha\lra T^2$ as  follows. We set
$\iota_{\{i\}}(z_i)\letbe (z_i,1)$ for $i=1,2,3$,
$\iota_{\{1,2\}}(z_1,z_2)\letbe (\quer z_1,z_2)$,
$\iota_{\{1,3\}}(z_1,z_3)\letbe (\quer z_1,\quer z_3)$
and $\iota_{\{2,3\}}(z_2,z_3)\letbe (\quer z_2,z_3)$.
The maps $\rho_\alpha$ are given by the p-adic completion of the map $BT^\alpha\lra BSO(4)$
induced by
the composition of $\iota_\alpha$ and
the maximal torus inclusion
$T^2\lra SO(4)$.
For maximal faces $\alpha\in K$ the pull back of the universal Euler and first Pontrjagin class along $\rho_{\alpha}$
are given by
$(\rho_{\{1,2\}})^*(e_2) = -v_1v_2$,
$(\rho_{\{1,3\}})^*(e_2) = -v_1v_3$ and
$(\rho_{\{2,3\}})^*(e_2) = v_2v_3$.

The restriction $\iota_{\{1,3\}}|_{T^{\{3\}}}$ and $\iota_{\{3\}}$ differ
by a permutation and complex conjugation of both coordinates, are conjugate in $SO(4)$
and establish homotopic maps $\rho_{\{1,3\}}|_{BT^{\{3\}}}$ and $\rho_{\{3\}}$.
Similarly, for all other choices of $j\in \alpha$, $|\alpha|=2$,
we get $\rho_\alpha|_{BT^{\{j\}}}\simeq \rho_{\{j\}}$.
The collection of maps $\rho_\alpha$ defines an element in $\lim[BT^K,BSO(4)\p]$.
If all obstruction groups vanish,
the maps $\rho_\alpha$ fit
together to define a map
$\rho_\omega \colon DJ(K)\lra BSO(4)\p$ and we have $\rho_\omega^*(p_1)=p_1(K)$ and
$\rho_\omega^*(e)=e_\omega$.
\end{exa}

In the case of $U(n)$ we do not need to care about complex conjugation.
The composition $\Z[c_1,...,c_n] \larrow{f} \z[K]\larrow{h^\alpha} \Z[\alpha]$ maps
$c_i$ onto $c_i(\alpha)$.
The map $\xi_\alpha\colon BT^\alpha \lra BU(n)\p$ is induced by
the composition $T^\alpha\larrow{f}  T^n\larrow{h_\alpha} U(n)$
of some coordinate wise inclusion followed by the
maximal torus inclusion. In this case the obstruction groups are given by higher derived limits of
the functors $\Pi^{U(n)}_i$.
The vanishing of the higher derived limits will establish a map
$\xi\colon hc(K) \lra BU(n)\p$, uniquely determined up to homotopy,
such that $\xi|_{BT^\alpha}=\xi_\alpha$.

For $n\leq r$ and a face $\alpha\in K$, we  denote by $\gamma_r{}_\alpha: BT^\alpha \lra BG_r$ one of the
maps $\xi_\alpha\oplus \C^{r-n}$, $\rho_{\omega}{}_\alpha\oplus\R^{2(r-n)}$ or $(\xi_\alpha)_\R\oplus \R^{2(r-n)+1}$.
By the above construction these maps fit together to define an element in
$\lim\, [BT^K,BG_r]$ and define functors $\Pi_i^{G_r} :\catkop \lra \ab$.

\begin{prop} \label{vhl}
For all $j\geq i\geq 1$,
we have
$$
\lim{}^j\, \Pi_i^{G_r}=0
$$
\end{prop}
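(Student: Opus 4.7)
The plan is to identify $\Pi_i^{G_r}(\alpha)$ via the Dwyer-Zabrodsky theorem and then reduce the vanishing of its higher derived limits to that of simpler functors by Lemma~\ref{atomic} together with a short-exact-sequence decomposition. For each face $\alpha$, the composition $\phi_\alpha\colon T^\alpha\to G_r$ is a coordinate-wise inclusion into the maximal torus (possibly after complex-conjugation twists), so its centralizer splits as
$$
C_{G_r}(\phi_\alpha(T^\alpha))\;\cong\;T^{|\alpha|}\times H_\alpha,
$$
with $H_\alpha$ a ``complementary block'' classical group of the same type as $G_r$ and of rank $r-|\alpha|$. Dwyer-Zabrodsky then yields
$$
\Pi_i^{G_r}(\alpha)\;\cong\;\pi_i\bigl((BT^{|\alpha|})\p\bigr)\oplus\pi_i\bigl((BH_\alpha)\p\bigr),
$$
and the transition for $\alpha\subset\beta$ is induced by the evident block inclusion, in which the $|\beta|-|\alpha|$ extra torus coordinates enter $H_\alpha$ through its standard maximal torus and thereby contribute to its center/determinant.

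For $i\neq 2$ the torus summand is identically zero. For any pair $\alpha\subset\beta$ with $|\beta|\leq n-j+1$ and $j\geq i\geq 3$, the rank $r-|\beta|\geq i-1$ places both $H_\alpha$ and $H_\beta$ in the Bott-stable range for $\pi_i((B\cdot)\p)$, so the block inclusion $H_\beta\hookrightarrow H_\alpha$ induces an isomorphism; Lemma~\ref{atomic}(iii) (applied with $j$ in place of $i$) then gives $\lim^j\Pi_i^{G_r}=0$. The case $i=1$ is trivial, since centralizers are connected.

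For $i=2$, and in the unitary case or in the orthogonal cases with $r>n$, the ``$H$-summand'' $A:=\pi_1(H_\alpha)\p$ is a constant $p$-local abelian group. A direct computation writes $\Pi_2^{G_r}(\alpha)\cong(\z\p)^{|\alpha|}\oplus A$ with transition $(x_1,\dots,x_{|\beta|},z)\mapsto(x_1,\dots,x_{|\alpha|},\,x_{|\alpha|+1}+\cdots+x_{|\beta|}+z)$, and the ``total first Chern class'' $(x,z)\mapsto\sum_i x_i+z$ assembles into a natural transformation to $\const_A$, yielding a short exact sequence of functors
$$
0\longrightarrow K_2\longrightarrow\Pi_2^{G_r}\longrightarrow\const_A\longrightarrow 0.
$$
Its kernel decomposes as $K_2=\bigoplus_{v\in V}\Psi_v$ with $\Psi_v(\alpha)=\z\p$ if $v\in\alpha$ (identity transitions) and $0$ otherwise; in the bar complex computing $\lim^{*}\Psi_v$, any nonzero chain $\alpha_0\supsetneq\cdots\supsetneq\alpha_j$ must have $v\in\alpha_j$, which forces $v\in\alpha_i$ for all $i$. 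Hence $\lim^{*}\Psi_v$ coincides with $\lim^{*}_{\st(v)^{op}}\const_{\z\p}$, and since $\st(v)^{op}$ has terminal object $\{v\}$ its nerve is contractible, giving $\lim^j\Psi_v=0$ for $j\geq 1$. Analogously $\lim^j\const_A=0$ for $j\geq 1$, so the long exact sequence of higher derived limits forces $\lim^j\Pi_2^{G_r}=0$ for all $j\geq 2$.

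The remaining case --- the orthogonal groups with $r=n$ and $j=2$, in which $\pi_1(H_\alpha)\p$ fails to be constant at the extremal faces $|\alpha|=n-1$ (where $\pi_1(SO(2))=\bZ$ differs from the stable value $\pi_1(SO(\geq 4))=\bZ/2$) --- is the main technical obstacle; it is handled by an auxiliary application of Lemma~\ref{atomic}(i) to isolate the exceptional faces of size $n-1$ as a sum of atomic functors whose higher limits vanish in the required range, allowing the SES argument to be carried out on the quotient. The naturality of the total first Chern class under the complex-conjugation twists of $\phi_\alpha$ in the orthogonal cases likewise requires careful sign bookkeeping.
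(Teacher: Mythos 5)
Your overall architecture matches the paper's: identify $\Pi_i^{G_r}(\alpha)$ as $\pi_i$ of the (completed) classifying space of the centralizer $T^{|\alpha|}\times H_\alpha$, then kill higher limits by combining stability ranges with Lemma~\ref{atomic} and a short exact sequence of functors for the delicate degree $i=2$. One genuine improvement: where the paper handles $\lim^j \Psi_2$ (equivalently $\lim^j H^2(BT^K;\z\p)$, $j\geq 2$) by citing \cite{nora1}, you decompose $\Psi_2\cong\bigoplus_{v}\Psi_v$ and observe that the cochain complex for each $\Psi_v$ is literally the cochain complex of the constant functor on the subposet of faces containing $v$, which has terminal object $\{v\}$; this is a short, self-contained argument and is correct.

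There is, however, a genuine gap in the $i=2$ step, precisely in the case that matters most. You choose a different extension than the paper: you surject onto $\const_A$ (via evaluation/``total first Chern class'') and claim the kernel $K_2$ is $\bigoplus_v\Psi_v$, asserting this works ``in the unitary case'' unconditionally. That fails for $r=n$. When $r=n$ and $\mu$ is a maximal face ($|\mu|=n$), the complementary block is $H_\mu=U(0)$, so $\pi_1(H_\mu)=0$, and $\Pi_2^{U(n)}(\mu)=(\z\p)^\mu$ with the map to $\const_{\z\p}$ being $x\mapsto\sum_i x_i$; the kernel at $\mu$ is the hyperplane $\{\sum x_i=0\}\cong(\z\p)^{n-1}$, not $(\z\p)^\mu=\bigoplus_{v\in\mu}\Psi_v(\mu)$. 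So $K_2\not\cong\bigoplus_v\Psi_v$ and the promised decomposition breaks down exactly on the top-dimensional faces. Since $\dim K=n-1$, such faces always exist, and $r=n$ is the central case (it is $\gamma_n=\xi$, the bundle whose existence/uniqueness drives Theorems~\ref{thm1} and~\ref{thm2}). The paper sidesteps this by using the other extension, $0\to\hat\Pi_2\to\Pi_2\to\Psi_2\to 0$: the exceptional values are then absorbed into $\hat\Pi_2$, and Lemma~\ref{atomic}(iii) only requires constancy for $|\beta|\leq n-j+1\leq n-1$, where $\hat\Pi_2^{U(n)}$ is still constantly $\z\p$. Your argument can be repaired (e.g.\ compare $K_2$ with $\bigoplus_v\Psi_v$ via a further short exact sequence whose cokernel is atomic in degree $n$, then apply Lemma~\ref{atomic}(i)), but as written the claim is false and the omission goes unnoticed.

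Separately, your final paragraph for the orthogonal groups with $r=n$ is only a sketch. The paper's treatment of $\hat\Pi_2^{SO_r}$ genuinely needs the $p$-parity case split (Lemma~\ref{necessary}(v) for $p$ odd makes most of $\hat\Pi_2^{SO_r}$ vanish outright; for $p=2$ one uses the exact sequence induced by $SO(2r)\to SO(2r+1)$) and must handle exceptional behaviour on faces of size $n-1$ \emph{and} $n$, not just $n-1$ as you state. Appealing to ``an auxiliary application of Lemma~\ref{atomic}(i)'' does not yet constitute a proof here; this is where most of the actual work in the paper's argument sits.
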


The proof needs some preparation.
The involved mapping spaces can be described in terms of centralizers of subtori.
A map $\gamma_r{}_\alpha$
is induced by a homomorphism $\iota_\alpha \colon T^\alpha \lra G_r$ which is given by the composition of
a coordinate wise  inclusion $T^\alpha \lra T_{G(r)}$,
possibly some conjugations on coordinates, and the inclusion of the maximal torus
into $G(r)$. Conjugation on coordinates of the maximal torus
has no effect on the centralizer of $\iota_\alpha$. The centralizer
$C_{G_r}(T^\alpha)\letbe C_{G_r}(\alpha)$ of the image of $\iota_\alpha$
can be identified with  $T^\alpha\times G_{r-|\alpha|}\subset G_r$,
where the inclusions of the factors are induced by the subset
relations $\alpha\subset [r]$ and $[r]\setminus\alpha\subset [r]$.

By \cite{no1}, there exists a mod-p equivalence
$$
BT^\alpha\times  BG_{r-|\alpha|}= BC_{G_r}(j_\alpha)
\lra map(BT^\alpha,BG_r{}\p)_{\gamma_\alpha}.
$$
Moreover, up to homotopy the above mod-p equivalence is
natural with respect to  morphisms $\beta \supset \alpha$ in $\catkop$.
Such an inclusion  induces the composition
$$
BT^\beta \times BG_{r-|\beta|}\lra
BT^\alpha \times BT^{\alpha\setminus \beta}\times BG_{r-|\beta|}
\lra BT^\alpha\times BG_{r-|\alpha|}.
$$
between the classifying spaces of the centralizers. After $p$-adic completion
this map is homotopic to the induced map between
the associated mapping spaces. Passing to homotopy groups, this
describes the map
$$
\Pi_i^{G_r}(\alpha)\lra \Pi_i^{G_r}(\beta).
$$

It will be convenient to define functors
$$
\Psi_2,\hat \Pi_2^{G_r} \colon \catkop \lra \ab
$$
by $\Psi_2(\alpha)\letbe \pi_2((BT^\alpha)\p)$ and
$\hat \Pi_2^G(\alpha)\letbe \pi_2(BG_{(r-|\alpha|)}{}\p)$.

In the following we partly need to distinguish between $U(r)$ on the one hand and
$SO(2r)$ and $SO(2r+1)$ on the other hand. We will denote by $SO_r$ one of
special orthogonal groups.

\begin{lem} \label{necessary}$\ \ \ \ \ $
\nz
(i) For $i=2$, we have an exact sequence
$$
0\lra \hat \Pi_2^{G_r} \lra \Pi_2^{G_r} \lra \Psi_2 \lra 0
$$
of functors. Moreover, we have $\Psi_2\cong H^2(BT^K;\z\p)$.
\nz
(ii)
$\Pi_{2j+1}^{U(r)}(\alpha)=0$ for all $j\leq r-|\alpha|-1$.
\nz
(iii) Let $\alpha \subset \beta$ and $|\beta| \leq n-j$.
If $j\geq 2$, then
$\Pi_{2j}^{U(r)}(\beta)\cong \Pi_{2j}^{U(r)}(\alpha)$.
If $j=1$, the same formula holds
for $\hat\Pi_2^{G_r}$.
\nz
(iv) $\Pi^{G_r}_1(\alpha)=0$ for all $\alpha$.
\nz
(v)If $p$ is odd, then $\hat \Pi^{SO_r}_2(\alpha)=0$
for $|\alpha|\neq r-1$.
\nz
(vi)
If $t\geq 3$ and $\alpha\subset \beta$, then
$\Pi^{SO_r}_t(\beta)\lra \Pi^{SO_r}_t(\alpha)$ is an isomorphism for
$|\beta| \leq r-t/2-1/2$.
If $p=2$, the same formula holds for
$\hat \Pi^{SO_r}_2$.
\end{lem}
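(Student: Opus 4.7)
The plan is to exploit, throughout the proof, the mod-$p$ equivalence recalled just before the lemma,
$$
\map(BT^\alpha, BG_r\p)_{\gamma_\alpha} \simeq B(T^\alpha \times G_{r-|\alpha|})\p,
$$
together with its naturality on a face inclusion $\alpha \subset \beta$: under the centralizer description this inclusion corresponds to the block inclusion $T^\beta \times G_{r-|\beta|} \hookrightarrow T^\alpha \times G_{r-|\alpha|}$ in which $T^\beta$ splits as $T^\alpha \times T^{\beta\setminus\alpha}$, with the second factor absorbed into the standard maximal torus of the complementary block $G_{r-|\alpha|}$. Passing to homotopy groups gives a natural identification
$$
\Pi_i^{G_r}(\alpha) \cong \pi_i(BT^\alpha\p) \oplus \pi_i(BG_{r-|\alpha|}\p),
$$
and each of the six assertions will reduce to a separate computation of the two summands.

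Parts (i) and (iv) are then essentially immediate. For (i), the second summand is $\hat\Pi_2^{G_r}(\alpha)$ and the first is $\Psi_2(\alpha)$, so the (split) short exact sequence is simply the one coming from the direct sum; naturality over $\catkop$ is read off from the factorization above. The identification $\Psi_2 \cong H^2(BT^K;\z\p)$ comes from Hurewicz: $\pi_2(BT^\alpha\p) \cong H_2(BT^\alpha;\z\p) \cong H^2(BT^\alpha;\z\p)$ (the last isomorphism holds because $H_2$ is free over $\z\p$ in degree $2$), and one checks that both functors send an arrow $\beta \to \alpha$ of $\catkop$ to the coordinate projection $\z\p^\beta \to \z\p^\alpha$. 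Part (iv) is trivial since $T^\alpha$ and $G_{r-|\alpha|}$ are connected.

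For parts (ii) and (iii) specialize to $G_r = U(r)$. Since $BT^\alpha$ is a product of copies of $K(\z,2)$, $\pi_i(BT^\alpha\p) = 0$ for $i \neq 2$, so $\Pi_i^{U(r)}(\alpha) \cong \pi_{i-1}(U(r-|\alpha|))\p$ whenever $i \geq 3$. Part (ii) is Bott periodicity, which gives $\pi_{2j}(U(k))=0$ in the stable range $2j \leq 2k-2$, i.e.\ $j \leq k-1$. Part (iii) for $j \geq 2$ is the Bott-stability of $\pi_{2j-1}(U(\cdot))$ under the block inclusion $U(r-|\beta|) \hookrightarrow U(r-|\alpha|)$, which gives an isomorphism when $|\beta| \leq r - j$; since $r \geq n$, the stronger hypothesis $|\beta| \leq n-j$ suffices. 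The $j=1$ case is restricted to $\hat\Pi_2$ precisely because the $\Psi_2$-summand is not preserved by the block inclusion, while the surviving $\hat\Pi_2$-part is handled by the same argument.

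Parts (v) and (vi) run the same program for $SO$, using $\hat\Pi_t^{SO_r}(\alpha) \cong \pi_{t-1}(SO_{r-|\alpha|})\p$. For (v) one inspects $\pi_1(SO(k)) = 0,\z,\z/2,\z/2,\ldots$ for $k = 1,2,3,\ldots$ and localizes at an odd prime: only $k = 2$ contributes nontrivially, which corresponds to $|\alpha| = r-1$ in the $SO(2r)$-case and never occurs in the $SO(2r+1)$-case. For (vi), Bott stability gives $\pi_{t-1}(SO(k)) \cong \pi_{t-1}(SO(k+1))$ whenever $t - 1 \leq k - 2$; substituting $k = 2(r - |\beta|)$ or $k = 2(r-|\beta|)+1$ yields the bound $|\beta| \leq r - (t+1)/2 = r - t/2 - 1/2$. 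The same range handles $\hat\Pi_2^{SO_r}$ at $p = 2$, since $\pi_1(SO(k))$ is already stable for $k \geq 3$. The main bookkeeping hurdle throughout is checking that the block inclusion of centralizers really does match the Bott stabilization map in each variant of $G_r$, and that the hypotheses phrased in terms of $n$ are always at least as strong as the intrinsic Bott range, which is automatic since $r \geq n$.
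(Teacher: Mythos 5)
Your proof is correct and matches the paper's own strategy: read $\Pi_i^{G_r}(\alpha)$ off the centralizer description as $\pi_i(BT^\alpha\p)\oplus\pi_i(BG_{r-|\alpha|}\p)$, then apply the standard stable ranges for $\pi_*U(\cdot)$ and $\pi_*SO(\cdot)$. Two minor cautions are worth recording. First, the objectwise direct-sum decomposition is not natural over $\catkop$: on a morphism $\beta\supset\alpha$ the block inclusion sends the $T^{\beta\setminus\alpha}$ factor into $G_{r-|\alpha|}$, so the induced map on $\pi_2$ is only upper triangular; hence only the short exact sequence in (i), not a splitting of it, is a sequence of functors---which is all that (i) actually asserts, so no harm is done. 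Second, you verify (iii) at $j=1$ only for $U(r)$, exactly as the paper's terse proof does; this is just as well, since the statement as literally written for $\hat\Pi_2^{SO(2r)}$ fails when $r=n$ (take $|\beta|=r-1$: there $\hat\Pi_2(\beta)=\pi_1(SO(2))\p=\z\p$ while $\hat\Pi_2(\alpha)=(\z/2)\p$ for every $\alpha\subsetneq\beta$), which is precisely the corner case that the proof of Proposition \ref{vhl} handles by a separate exact-sequence argument rather than by citing (iii).
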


\begin{proof}
The natural transformations $\hat \Pi_2^{G_r} \lra \Pi_2^{G_r}$ and
$\Pi_2^{G_r}\lra \Psi_2$ establish the exact sequence of Part (i).
The second claim of $(i)$ is obvious.

Part (ii) and Part (iii) follow from the fact that
$\pi_{2s+1}(BU(t))=0$ for $0\leq s\leq t-1$ and
$\pi_{2s}(BU(t))\cong \pi_{2s}(BU(t+1))$ for $1\leq s\leq t$.

For every connected compact Lie group $H$, the classifying space
$BH$ is simply connected. This proves  Part (iv).

The fifth part follows from the fact that $\pi_2(BSO(s))\cong \z/2$ for
$s\geq 3$.

Finally, $\pi_t(BSO(k))\lra \pi_t(BSO(k+1))$ is an isomorphism for
$t\leq k-1$. Since $\Pi_t^{SO_r}(\beta)=\pi_t(BSO_{n-|\beta|}{}\p)$
and $\hat \Pi_2^{SO_r}(\beta)=\pi_2(BSO_{n-|\beta|})\p)$,
this implies the last claim.
\end{proof}

{\it Proof of Proposition \ref{vhl}:}
Since $\Pi_1^{G_r}\equiv 0$ (Lemma \ref{necessary}(iv)),
we have $\lim{}^{j}\, \Pi_1^{G_r}=0$ for $j\geq 1$.

Now we consider the functor $\Pi_i^{U(r)}$ for $i\geq 3$.
If $i=2k+1$ and $|\alpha| \leq n-i+1=n-2k\leq n-k-1$, then
$\Pi_{2k+1}^{U(r)}(\alpha)=0$ (Lemma \ref{necessary}(ii)).
Lemma \ref{necessary}(iii) implies that
$\lim{}^{j}\, \Pi_i^{U(r)}=0$  for $j\geq i$.

If $i=2k\geq 3$ and $| \alpha| \leq n-2k\leq n-k-1$, then
$\Pi_i^{U(r)}(\alpha)=\z\p$ and $\Pi_i^{U(r)}(\alpha)\cong \Pi_i^{U(r)}(\beta)$ for
$\beta\subset \alpha$ by Lemma \ref{necessary}(iii). Hence, by Lemma \ref{atomic}(iii),
we have again $\lim{}^{j}\, \Pi_i^{U(r)}=0$ for $j\geq i$.

The same argument shows that $\lim{}^{j}\, \hat \Pi_2^{U(r)}=0$ for
$j\geq 2$.

By Lemma \ref{necessary}(i) the functor $\Pi_2^{U(r)}$ fits into the exact
sequence
$$
0\lra \hat \Pi_2^{U(r)} \lra \Pi_2^{U(r)} \lra \Psi_2\cong H^2(BT^K;\z\p)\lra 0.
$$
For $j\geq 2$, we have
$\lim{}^{j}\, H^2(BT^K;\z\p)=0$ \cite{nora1} as well as
$\lim{}^{j}\, \hat\Pi_2^{U(r)}=0$. Hence, the same holds for $\Pi_2^{U(r)}$.
This finishes the proof in the complex case.

Now we consider the cases of the special orthogonal groups.
If $j\geq i\geq 3$ we have
$n-j+1\leq n-i+1\leq n-i/2-1/2$. This shows that
$\Pi^{SO_r}_i(\beta)\cong \Pi^{SO_r}_i(\alpha)$ for $\alpha\subset \beta$ and
$|\beta| \leq n-j+1$ (Lemma \ref{necessary}(vi)) and that
$\lim{}^j\, \Pi^{SO_r}_i=0$
(Lemma \ref{atomic}(iii).

For $i=2$ we again first consider the functor $\hat\Pi^{SO_r}_2$.
If $p$ is odd, then $\hat \Pi^{SO_r}_2(\alpha)\neq 0$ if and only
if $|\alpha|=r-1$ and $SO_r=SO(2r)$. Hence, if $r\geq n+2$ or $SO_r=SO(2r+1)$,
the functor $\hat\Pi_2^{SO_r}$ is trivial and $\lim^j \hat\Pi_2^{SO_r}=0$.
For $r=n,\ n+1$ and $SO_r=SO(2r)$, the functor $\hat \Pi^{SO_r}_2$ is a product of atomic functors
and only nontrivial either on faces of order $n$ or $n-1$.
But in both case, we have $\lim^j \hat \Pi^{SO_r}_2=0$ for $j\geq 2$ (Lemma \ref{atomic}(i)).

If $p=2$, then
$$
\hat \Pi_2^{SO_r}(\alpha)\cong \pi_2(BSO_{r-|\alpha|{}^\wedge_2})\cong
\begin{cases}
0 &\text{ for } r=|\alpha|\\
\Z^\wedge_2 &\text{ for } r=|\alpha|+1, SO_r=SO(2r) \\
\Z/2 &\text{ otherwise }
\end{cases}.
$$
If $r\geq n+1$ or if $r\geq n$ and $SO_r=SO(2r+1)$, we have
$\hat \Pi^{SO_r}_2(\beta) \cong \hat \Pi^{SO_r}_2(\alpha)$
for $\alpha\subset \beta$ and $|\beta|\leq n-2+1$. Using Lemma \ref{atomic}(iii) again, this shows that
in these cases
$\lim^j \hat \Pi^{SO_r}_2=0$ for $j\geq 2$.
If $r=n$, then we have an exact sequence of functors
$$
0\lra \Phi \lra \hat \Pi^{SO(2r)}_2 \lra \hat \Pi^{SO(2r+1)}_2 \lra 0
$$
where the natural transformation $\hat \Pi^{SO(2r)}_2 \lra \hat \Pi^{SO(2r+1)}_2$
is induced by the inclusion $SO(2r)\ra SO(2r+~1)$ and is an epimorphism.
For the kernel $\Phi$ we have $\Phi=\Phi_{n-1}$, which implies that $\lim^s \Phi=0$
for $s\neq 1$ (Lemma \ref{atomic}(i)).
This shows that $\lim^j \hat \Pi^{SO(2r)}_2=0$ for $j\geq 2$ as well.

Now, the same argument as for $\Pi^{U(r)}_2$ shows that $\lim{}^j\, \Pi^{SO_r}_2=0$.
This finishes the proof for $SO_r$.
\qed

\medskip

The above arguments and Proposition \ref{vhl} allow to draw the following corollary.

\begin{cor} \label{restrictiontotori}
Let $\phi, \psi\colon  DJ(K) \lra  BG_r{}\p$ be two map such that for each face
$\alpha$ of $K$ the restrictions $\phi|_{BT^\alpha}$ and $\psi|_{BT^\alpha}$ are homotopic to
$\gamma_r{}_\alpha$. Then, $\phi$ and $\psi$ are homotopic.
\end{cor}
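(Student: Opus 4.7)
The plan is to combine Proposition \ref{vhl} with the Bousfield--Kan obstruction theory already sketched at the start of Section \ref{p-adic case}, which is exactly the machine driving the proofs of Theorems \ref{padicexistence} and \ref{padicuniqueness}. Since this is a uniqueness statement for maps out of $DJ(K)$, essentially no new input is required beyond what has been set up.

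First I pass to the model $DJ(K) \simeq hc(K) = \hocolim_{\catk} BT^K$. The standard identification
$$
\map(hc(K), BG_r\p) \simeq \holim \map(BT^K, BG_r\p)
$$
produces the restriction map
$$
R \colon [hc(K), BG_r\p] \lra \lim\, [BT^K, BG_r\p].
$$
The hypotheses on $\phi$ and $\psi$ say precisely that $\phi|_{BT^\alpha} \simeq \gamma_r{}_\alpha \simeq \psi|_{BT^\alpha}$ for every $\alpha \in K$, so $R(\phi)$ and $R(\psi)$ both coincide with the compatible element $\hat\gamma_r \in \lim\, [BT^K, BG_r\p]$ determined by the collection of classes $[\gamma_r{}_\alpha]$.

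Next I invoke the obstruction theory for homotopy inverse limits from \cite{boka}, together with Wojtkowiak's analysis of the low-degree fringe \cite{wojtkowiak}, as recalled immediately before Proposition \ref{vhl}. This theory asserts that $R^{-1}(\hat\gamma_r)$ consists of a single homotopy class as soon as $\lim{}^j\, \Pi_i^{G_r}=0$ for $j=i$ and $j=i+1$ and all $i\geq 1$. Proposition \ref{vhl} supplies exactly this vanishing (indeed for all $j\geq i\geq 1$), so $R^{-1}(\hat\gamma_r)$ contains at most one element, and therefore $\phi \simeq \psi$.

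There is really no obstacle here: the heavy lifting, namely the vanishing of the higher limits of the functors $\Pi_i^{G_r}$ in the critical bidegree range, is already contained in Proposition \ref{vhl}, and the obstruction-theoretic framework is in place. The only point worth flagging is that we are using the uniqueness half of the obstruction-theoretic dichotomy (two lifts of the same element of the inverse limit are homotopic), rather than the existence half that was invoked to build $\xi\p$ and $\rho_\omega\p$.
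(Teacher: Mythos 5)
Your argument is exactly the paper's: the paper derives this corollary directly from Proposition \ref{vhl} together with the Bousfield--Kan/Wojtkowiak obstruction theory set up at the start of Section \ref{p-adic case}, and you have simply spelled out the same reasoning. Correct, and essentially identical in approach.
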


The following two facts may be found in \cite{no1}.

\begin{thm} \label{facts}
Let $G$ be a connected compact Lie group with maximal torus $T_G\subset G$
and $S$ a torus.
\nz
(i) The map $\hom(S,T_G)/W_G \lra [BS,BG]$ is a bijection.
\nz
(ii) The map $[BS,BG]\lra \Hom(H^*(BG;\Q),H^*(BS;\q))$ is an injection.
\end{thm}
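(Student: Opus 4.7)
The plan is to reduce both parts of the statement to known rational and $p$-adic calculations and to assemble the pieces via Sullivan's arithmetic square, in the spirit of the reduction carried out in Section \ref{global uniqueness}.

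For part (i), I would first observe that $BT_G\simeq K(\Lambda,2)$ where $\Lambda$ is the integral lattice of $T_G$, which gives $[BS,BT_G]=\hom(S,T_G)$. Composition with $BT_G\lra BG$ induced by the maximal torus inclusion factors through $W_G$-orbits because $G$ is connected: any representative $g\in N_G(T_G)$ of $w\in W_G$ can be joined to $e\in G$ by a path, and this path induces a homotopy between the two maps $BT_G\lra BG$ obtained from $W_G$-conjugate torus inclusions. This produces a well-defined map $\hom(S,T_G)/W_G\lra [BS,BG]$, and the real task is to show it is bijective.

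Bijectivity is checked componentwise in the arithmetic square. At each prime $p$ I would invoke the Dwyer--Zabrodsky theorem, which identifies $\pi_0\,\map(BS,BG\p)$ with conjugacy classes of homomorphisms $S\lra G$, and then apply the classical maximal torus theorem to identify these conjugacy classes with $\hom(S,T_G)/W_G$. Rationally, $BG_0$ is a product of Eilenberg--MacLane spaces concentrated in even degrees, so $[BS,BG_0]$ is detected by algebra maps $H^*(BG;\Q)\lra H^*(BS;\Q)$, which in turn correspond to $W_G$-orbits of rational homomorphisms $S\otimes\Q\lra T_G\otimes\Q$. Pulling back across the arithmetic square, using that $\hom(S,T_G)$ is finitely generated and $BT^r$ has torsion-free cohomology concentrated in even degrees, then glues these local bijections into the global one.

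Part (ii) is essentially the rational half of the above. The map $[BS,BG]\lra [BS,BG_0]$ is injective: by part (i) it corresponds to $\hom(S,T_G)/W_G\lra \hom(S\otimes\Q,T_G\otimes\Q)/W_G$, which is injective because $T_G$ is torsion-free. Composing with the bijection $[BS,BG_0]\lra \Hom(H^*(BG;\Q),H^*(BS;\Q))$ arising from $BG_0$ being a product of even-degree rational Eilenberg--MacLane spaces yields the desired injection. The main obstacle in the whole argument is the $p$-adic content of part (i), namely the identification of the components of $\map(BS,BG\p)$: this rests on the Dwyer--Zabrodsky/Notbohm analysis of mapping spaces from classifying spaces of tori into $p$-completed classifying spaces of compact Lie groups, and without that input neither surjectivity nor injectivity at each prime is accessible by elementary means.
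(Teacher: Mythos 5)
The paper does not prove Theorem \ref{facts}: the sentence immediately preceding it reads ``The following two facts may be found in \cite{no1},'' so there is no proof in the text to compare against, only a citation, and your arithmetic-square outline is indeed the strategy of that reference. However, one step as you state it is false, and the falsity is exactly where the content of the theorem lives. You assert that at each prime $p$ one has an identification $\pi_0\,\map(BS,BG\p)\cong\hom(S,T_G)/W_G$. It does not hold: what the Notbohm extension of Dwyer--Zabrodsky gives (Dwyer--Zabrodsky treat finite $p$-groups, not tori; the $p$-toral case is Notbohm's) is $[BS,BG\p]\cong\Hom(\pi_1 S,\pi_1 T_G\otimes\Z\p)/W_G$, a strictly larger set into which $\hom(S,T_G)/W_G$ injects but does not surject. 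So there are no ``local bijections to glue.'' Surjectivity of $\hom(S,T_G)/W_G\lra[BS,BG]$ is exactly the arithmetic that the gluing must produce, and it needs a genuine argument: given a compatible family of classes in $\Hom(\pi_1 S,\pi_1 T_G\otimes\Z\p)/W_G$ for all $p$ and in $\Hom(\pi_1 S,\pi_1 T_G\otimes\Q)/W_G$ agreeing adelically, choose lifts, adjust the $p$-adic lift by a Weyl element prime by prime so that all lifts agree with the rational one in $\Hom(\pi_1 S,\pi_1 T_G\otimes\Q\p)$, and then use that a rational homomorphism which is $p$-integral for every $p$ lies in $\Hom(\pi_1 S,\pi_1 T_G)$. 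Your sentence about finite generation and torsion-free even cohomology gestures at this but does not carry out the $W_G$-bookkeeping, which is precisely what makes the step nontrivial. You should also record that the passage from the homotopy pullback of mapping spaces to a set-level pullback of $\pi_0$'s requires $\pi_1\,\map(BS,BG_{\af})=0$, which follows from $H^{\mathrm{odd}}(BS;\Q)=0$; without this, $\pi_0$ of the homotopy pullback is not computed as a fibre product.

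The reduction of part (ii) to part (i) is correct: linearity of the $W_G$-action and torsion-freeness of $\pi_1 T_G$ give injectivity of $\hom(S,T_G)/W_G\lra\hom(S,T_G)\otimes\Q/W_G$, and $BG_0$ being a product of rational even Eilenberg--Mac Lane spaces identifies $[BS,BG_0]$ with ring maps on rational cohomology. That in fact mirrors the argument the paper does spell out for the $p$-adic analogue of (ii) in Remark \ref{torimono}, where the key input (via Adams--Mahmud) is that any two extensions of a ring map $H^*(BG;\Q)\lra H^*(BS;\Q)$ to $H^*(BT_G;\Q)$ differ by an element of $W_G$.
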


\begin{rem} \label{torimono}
For the proof of Theorem \ref{padicexistence} and
Theorem \ref{padicuniqueness} we need a $p$-adic version of the second part,
That is that
$$
[BT^s,BG\p]\lra Hom(H^*_{\Q\p}(BG),H^*_{\Q\p}(BT^s))
$$
is a monomorphisms.
A similar argument as in the global case will work.

Since $BT_G{}\p$ is an Eilenberg-Maclane space with torsion free
$p$-adic cohomology, the statement holds if we replace $BG$ by $BT_G$.
Moreover, up to homotopy, every map
$BT^s\lra BG\p$ factors through the map $BT_G{}\p\lra BG\p$ \cite[Proposition 8.11]{dwannals}.
Factorizations $BT^s\lra BT_G{}\p$ of a map $\phi\colon BT^s \lra BG\p$
define extensions $H^*_{\q\p}(BT_G)\lra H^*_{\q\p}(BT^s)$  of
$H^*_{\Q\p}(\phi)\colon H^*_{\q\p}(BG)\lra H^*_{\q\p}(BT^s)$. But two extensions differ
only by an element of the Weyl group
$W_G$ of $G$. This is proved in \cite[Theorem 1.7]{am} for rational cohomology, but
since $H^*_{\q\p}(BG)\cong H^*_{\q\p}(BT_G)^{W_G}$ and since $\q\p$ is an infinite field
the same argument works in our case.

Now we can argue as follows. Let $\phi, \psi\colon BT^s \lra BG\p$ be two maps
which induce the same map in $H^*_{\Q\p}(-)$-cohomology. We choose
lifts $\hat \phi,\hat \psi\colon BT^s\lra BT_G{}\p$. By the above arguments there
exists $w\in W_G$ such that $\hat\phi\simeq w\hat \psi$. Since conjugation by elements
induces self maps of $BG$ homotopic to the identity, this shows that
$\phi$ and $\psi$ are homotopic.
\end{rem}

{\it Proof of Theorem \ref{padicexistence}:}
By Proposition \ref{vhl} and the preceding arguments,
there exist maps $\xi\colon DJ(K)\lra BU(n)\p$
and $\rho_{\omega}\colon DJ(K)\lra BSO(2n)\p$
such that, for all faces $\alpha\in K$, the restrictions
$\xi|_{BT^\alpha}\simeq \xi_\alpha$ and $\rho_{\omega}|_{BT^\alpha}\simeq \rho_{\omega}{}f_\alpha$.
Since $\z\p[K]\lra \prod_{\alpha\in K}\z\p[\alpha]$ is a monomorphism, we have
$\xi^*(c_i)=c_i(K)$, $\rho_{\omega}^*(p_i)=p_i(K)$ and $\rho_{\omega}^*(e)=e_\omega$.
\qed.

\medskip

{\it Proof of Theorem \ref{padicuniqueness}:}
We continue to denote by $\gamma_r\colon DJ(K)\lra BG_r{}\p$ one of the maps
$\xi\oplus \C^{r-n}$, $\rho_{\omega}\oplus \R^{2(r-n)}$ or $\xi_{\R}\oplus \R^{2(r-n)+1}$.
Let $\delta_r\colon DJ(K)\lra (BG_r)\p$ be a map such that
$H^*_{\q\p}(\delta_r)=H^*_{\Q\p}(\gamma_r)$. Then, the restrictions
$(\delta_r)_\alpha\letbe \delta_r|_{BT^\alpha}$ and $\gamma_r|_{BT^\alpha}\simeq \gamma_r{}_\alpha$
induce the same map in $H^*_{\q\p}(-)$-cohomology and are therefore homotopic
(Proposition \ref{torimono}). Corollary \ref{restrictiontotori}
shows that $\delta_r$ and $\gamma_r$ are homotopic.
\qed

\section{Complex structures for vector bundles} \label{cxstructure}

The stable uniqueness result for vector bundles over $DJ(K)$
realizing the total Pontrjagin class
$p(K)=\prod_i(1-v_i^2)$
shows that all the bundles $\rho_{\omega}\colon DJ(K)\lra BSO(2n)$
of Theorem \ref{globalexistence} admit stably a complex structure. In fact,
$\rho_{\omega}\oplus \R^2\cong \xi_\R\oplus \R^2$ already
admits a complex structure.
In this section we will discuss the unstable version, namely which of the bundles $\rho_\omega$ actually are
isomorphic to the realification of a complex vector bundle.
We also will classify the complex structures of $\rho_\omega$ up to isomorphism, in the unstable case as well
as in the stable case.
For this purpose we need to classify all complex bundles $\eta$ over $DJ(K)$ with $p(\eta_{\R})=p(K)$.

For every function $f\colon [m] \lra \{\pm 1\}$ we define an element
$$
c_f(K)\letbe \prod_{i=1}^m (1+f(i)v_i)\in \Z[K]=H^*(DJ(K);\z).
$$
We will construct a complex vector bundle
$\xi_f\da DJ(K)$ with $c(\xi_f)=c_f(K)$. If $f(i)=1$ for all $i\in [m]$, then $\xi_f=\xi$.

For each $\alpha\in K$ we define a homomorphism $\iota_{f,\alpha}\colon T^\alpha \lra T^\alpha$
which is given by complex conjugation on  the $i$-th coordinate if $f(i)=-1$ and the identity otherwise.
The collection of the maps
$\theta_{f,\alpha} :BT^\alpha \lra BT^\alpha$ induced by $\iota_{f,\alpha}$
 provides a natural transformation
$\Theta_f\colon BT^K \lra BT^K$
and therefore a map
$$
\theta_f\colon DJ(K)\simeq c(K)\lra c(K)\simeq DJ(K).
$$
By construction, the square $\theta_f^2$ is homotopic to the identity on $DJ(K)$.
In cohomology, the induced map $\theta_f^*\colon \Z[K] \lra \Z[K]$
maps $v_i$ to $f(i)v_i$.
The  total Chern class of the composition $\xi_f\letbe \xi\theta_f\colon DJ(K)\lra BU(n)$
is given by $c(\xi_f)=\prod_{i=1}^m (1+f(i)v_i)=c_f(K)$, the Euler class of $(\xi_f)_\R$
by $e((\xi_f)_\R)=c_n(\xi_f)=\sum_{\mu\in M_K} \omega_f(\mu)v_\mu=e_f(K)$ and the total Pontrjagin class
by $p((\xi_f)_\R)=p(\xi_f)=p(K)$.
As already explained in the introduction, the function $\omega_f\colon M_K\lra \{\pm 1\}$ is given by
$\omega_f(\mu)=\prod_{i\in \mu} f(i)$.

\begin{prop} \label{classcxstructure}
Let $f,g \colon [m]\lra \{\pm 1\}$ be two functions.
\nz
(i) The complex vector bundles $\xi_f$ and $\xi_g$ are isomorphic if and only if $f=g$
\nz
(ii) The realifications  $(\xi_f)_{\R}$ and $(\xi_g)_{\R}$ are isomorphic as non-oriented bundles
if and and only if
$\omega_f=\pm \omega_g$.
\end{prop}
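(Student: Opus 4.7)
The plan is to derive both parts from the uniqueness theorems of Section 4 together with linear independence of suitable monomials in $\Z[K]$; in both parts the "if" implication is essentially tautological, so the work lies in converting bundle isomorphisms into identities of the parameters.

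For part (i), if $f=g$ then the natural transformations $\Theta_f$ and $\Theta_g$ coincide and therefore $\xi_f=\xi_g$. Conversely, isomorphic complex vector bundles have equal Chern classes, so $c_f(K)=c_g(K)$ in $\Z[K]$. I would compare the degree-two components: since every singleton is a face of $K$, the classes $v_1,\ldots,v_m$ are linearly independent in $(\Z[K])^2$, and hence $\sum_i f(i)v_i=\sum_i g(i)v_i$ forces $f(i)=g(i)$ for all $i$.

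For part (ii), I would first identify $(\xi_f)_\R$ with the oriented bundle $\rho_{\omega_f}$ of Theorem \ref{globalexistence}: both are oriented, $2n$-dimensional, and share Pontrjagin class $p(K)$ and Euler class $e_{\omega_f}(K)$, so by Theorem \ref{globaluniqueness} they are isomorphic as oriented bundles. Since $DJ(K)$ is simply connected, a non-oriented isomorphism between two oriented bundles is an oriented isomorphism after possibly reversing the orientation on one side, and orientation reversal negates the Euler class. Applying Theorem \ref{thm2}(iii) once more, I would conclude that $(\xi_f)_\R\cong(\xi_g)_\R$ as non-oriented bundles if and only if $e_{\omega_f}(K)=\pm e_{\omega_g}(K)$ in $\Z[K]$.

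The main obstacle is then converting this cohomological identity into the combinatorial identity $\omega_f=\pm\omega_g$. Writing $e_\omega(K)=\sum_{\mu\in M_K}\omega(\mu)v_\mu$, this amounts to showing that the monomials $\{v_\mu:\mu\in M_K\}$ are linearly independent in $\Z[K]$. I would deduce this from the embedding $\Z[K]\hookrightarrow\bigoplus_{\alpha\in K}\Z[\alpha]$ of Remark \ref{limitzk}: the projection $h^\mu$ sends $v_{\mu'}$ to $v_{\mu'}$ when $\mu'\subseteq\mu$ and to $0$ otherwise, and for maximal $\mu,\mu'\in M_K$ the inclusion $\mu'\subseteq\mu$ forces $\mu'=\mu$. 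Matching coefficients in $e_{\omega_f}(K)=\pm e_{\omega_g}(K)$ then yields $\omega_f=\pm\omega_g$, completing the proof.
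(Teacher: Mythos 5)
Your proposal is correct and follows essentially the same route as the paper: part (i) by comparing first Chern classes using linear independence of $v_1,\ldots,v_m$ in degree two, and part (ii) by reducing a non-oriented isomorphism to equality of Euler classes up to sign via the uniqueness theorem and Remark \ref{nonoriented}, then matching coefficients on the $v_\mu$. You spell out some steps the paper's proof leaves implicit (the identification $(\xi_f)_\R\cong\rho_{\omega_f}$ and the linear independence of $\{v_\mu:\mu\in M_K\}$ via the embedding $\Z[K]\hookrightarrow\bigoplus_\alpha\Z[\alpha]$), but the underlying argument is the same.
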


\begin{proof}
If $\xi_f\cong \xi_g$, then both have the same Chern classes given by
$\prod_i (1+f(i)v_i)$ and $\prod_i (1+g(i)v_i)$.
A comparison of the first Chern class of both bundles already shows that $f(i)=g(i)$ for all $i$.

Since the Euler classes of $\xi_f$ and $\xi_g$ are given by $e_f(K)$ and $e_g(K)$. The bundles
$\xi_f$ and $\xi_g$ are isomorphic as non-oriented real vector bundles if and only if the Euler classes of both bundle
differ at most by a sign (Theorem \ref{thm2} and Remark \ref{nonoriented}).
And this is true if and only if $\omega_f=\pm \omega_g$.
\end{proof}

The next theorem classifies all complex bundles whose realification realizes the Pontrjagin class
$p(K)=\prod_i(1-v_i^2)$.

\begin{thm} \label{classification}
Let $K$ be an $(n-1)$-dimensional simplicial complex and
$\eta\colon DJ(K)\lra BU(n)$
a complex vector bundle.
\nz
(i) For any map $f\colon [m]\lra \{\pm 1\}$ the map $\eta$ is homotopic to $\xi_f$ if and only if
$H^*(\eta;\Q)=H^*(\xi_f;\Q)$.
\nz
(ii) If $p(\eta_{\R})=p(K)$, then there exists a
function $f\colon [m]\lra \{\pm 1\}$ such that
$\eta\simeq \xi_f$.
\end{thm}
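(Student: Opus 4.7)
The plan is to handle Part (i) by reducing to Theorem \ref{globaluniqueness} via the self-equivalence $\theta_f\colon DJ(K)\to DJ(K)$ constructed before Proposition \ref{classcxstructure}, and to handle Part (ii) by a unique-factorization argument in $\Z[K]$ which funnels the Pontrjagin hypothesis down to the Chern class of $\eta$.

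For Part (i), the ``only if'' direction is immediate. For ``if'', suppose $H^*(\eta;\Q)=H^*(\xi_f;\Q)$. Post-composing with $\theta_f$ (which satisfies $\theta_f\circ\theta_f\simeq\id$) I obtain $\eta\circ\theta_f$ and $\xi_f\circ\theta_f=\xi\circ\theta_f^2\simeq\xi$, which still agree in rational cohomology. Theorem \ref{globaluniqueness} applied to $\xi$ (with $r=n$) then gives $\eta\circ\theta_f\simeq\xi$, and a further composition with $\theta_f$ yields $\eta\simeq\xi\circ\theta_f=\xi_f$.

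For Part (ii), by Part (i) it suffices to construct $f\colon[m]\to\{\pm1\}$ with $c(\eta)=c_f(K)$ in $\Z[K]$. The Chern-root identity $\prod(1+y_j)(1-y_j)=\prod(1-y_j^2)$ identifies $p(\eta_\R)$ with $c(\eta)\cdot c(\bar\eta)$, where $c(\bar\eta)=\tau_K(c(\eta))$ and $\tau_K\colon\Z[K]\to\Z[K]$ is the involution $v_i\mapsto -v_i$. The hypothesis $p(\eta_\R)=p(K)$ therefore becomes
\[
c(\eta)\cdot\tau_K(c(\eta))\;=\;\prod_{i=1}^m(1-v_i^2)\quad\text{in }\Z[K].
\]
I would analyze this equation face-by-face: applying $h^\alpha\colon\Z[K]\to\Z[\alpha]$ from Remark \ref{limitzk} and writing $u_\alpha\letbe h^\alpha(c(\eta))$, one obtains in the polynomial ring $\Z[\alpha]$ the identity
\[
u_\alpha\cdot\tau_\alpha(u_\alpha)=\prod_{i\in\alpha}(1-v_i)(1+v_i).
\]
The right-hand side is a square-free product of $2|\alpha|$ pairwise distinct primes in the UFD $\Z[\alpha]$, so $u_\alpha$ and $\tau_\alpha(u_\alpha)$ must be coprime; since $\tau_\alpha$ exchanges $(1-v_i)$ with $(1+v_i)$ and $u_\alpha$ has constant term $1$, this forces $u_\alpha=\prod_{i\in\alpha}(1+\epsilon_i^\alpha v_i)$ for unique signs $\epsilon_i^\alpha\in\{\pm1\}$.

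Finally, the signs glue globally: for $\alpha\subset\beta$, naturality $h^\alpha=h^\alpha_\beta\circ h^\beta$ yields $\epsilon_i^\beta=\epsilon_i^\alpha$ for all $i\in\alpha$, so $\epsilon_i^\alpha$ depends only on $i$. Setting $f(i)\letbe\epsilon_i^{\{i\}}$ gives $h^\alpha(c(\eta))=h^\alpha(c_f(K))$ for every face $\alpha$, and the monomorphism $\Z[K]\hookrightarrow\bigoplus_\alpha\Z[\alpha]$ from Remark \ref{limitzk} forces $c(\eta)=c_f(K)$; Part (i) then delivers $\eta\simeq\xi_f$. I expect the main obstacle to be the unique-factorization step --- pinning down that $u_\alpha$ selects exactly one element of each pair $\{1-v_i,1+v_i\}$ --- which relies on the square-freeness of $\prod(1-v_i^2)$ in the UFD $\Z[\alpha]$.
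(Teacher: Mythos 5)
Your proof of Part (i) coincides with the paper's: both post-compose with the involution $\theta_f$, use $\theta_f^2\simeq\id$, apply the uniqueness Theorem \ref{globaluniqueness} to the pair $\eta\theta_f,\ \xi$, and compose back with $\theta_f$.

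Your proof of Part (ii) is correct but takes a genuinely different route from the paper's. The paper first identifies $e(\eta_\R)$ with some $e_\omega(K)$ and invokes Theorem \ref{globaluniqueness} to get $\eta_\R\simeq\rho_\omega$, then restricts both to the subtori $BT^\alpha\subset DJ(K)$ and uses the Lie-theoretic classification of maps $BT^\alpha\to BG$ (Theorem \ref{facts}: homomorphisms into the maximal torus modulo Weyl-group action) to conclude that the homomorphism underlying $\eta|_{BT^\alpha}$ is a coordinatewise inclusion into $T^n$ twisted by complex conjugation on some coordinates; the function $f$ is then read off from which single-vertex inclusions carry a conjugation, with the Weyl group of $U(n)$ (permutations only, no conjugations) guaranteeing consistency. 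Your argument instead stays entirely inside $\Z[K]$: you exploit the identity $p(\eta_\R)=c(\eta)\cdot\tau_K(c(\eta))$ (valid in the paper's normalization, where $p_i$ restricts to $\sigma_i(-v^2)$ on $BT^r$), push this equation through the face projections $h^\alpha$ of Remark \ref{limitzk}, and use unique factorization in $\Z[\alpha]$: square-freeness of $\prod_{i\in\alpha}(1-v_i)(1+v_i)$ forces $u_\alpha$ and $\tau_\alpha(u_\alpha)$ to be coprime with $u_\alpha=\prod_{i\in\alpha}(1+\epsilon_i^\alpha v_i)$, and naturality of $h^\alpha$ glues the signs into a global $f$. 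Both approaches then hand off to Part (i). Your version is more elementary and self-contained — it bypasses the $\rho_\omega$ machinery, the explicit Euler-class bookkeeping, and Theorem \ref{facts} altogether, at the cost of a short commutative-algebra digression; the paper's version is less novel but more clearly integrated with the classifying-space rigidity results on which the rest of the paper rests. The only point you should spell out a bit more is the coprimality step (a common prime factor of $u_\alpha$ and $\tau_\alpha(u_\alpha)$ would contribute a square to their product, contradicting square-freeness), and the degree bookkeeping ensuring $u_\alpha$ has no terms above degree $2|\alpha|$, both of which you correctly flag as the heart of the factorization step.
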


\begin{proof}
The first part follows from the fact that $\theta_f^2=id_{DJ(K)}$. If $\eta$ and $\xi_f$
induce the same map in rational cohomology, then the same holds
for $\eta\theta_f$ and $\xi_f \theta_f\simeq \xi\theta_f^2\simeq \xi$.
Hence, by Theorem \ref{thm2}, both maps
are homotopic as well as $\eta$ and $\xi_f$.

Now let $\eta\colon DJ(K)\lra BU(n)$ be a complex vector bundle with
$p(\eta_{\R})=\prod_{i=1}^m (1-v_i^2)$. The Euler class $e(\eta_{\R})$
is a square root of $(-1)^np_n(K)$ and determines a function
$\omega: M_K\lra \{\pm 1\}$ such that
$e(\eta_{\R})=e_\omega(K)$.
By Theorem \ref{globaluniqueness}, the maps $\eta_{\R}$ and $\rho_{\omega}$ are homotopic.

For each $\alpha\in K$, the composition
$$
\eta_\alpha\colon BT^\alpha \lra DJ(K)\larrow{\eta} BU(n)
$$
is homotopic to a map induced by
a homomorphism $\iota_\alpha\colon T^\alpha \lra T^n$ composed with
the maximal torus inclusion $T^n \lra U(n)$ (Theorem \ref{facts}).
And the composition
$$
BT^\alpha\lra DJ(K)\larrow{\eta} BU(n)\lra BSO(2n)
$$
is homotopic to a map
induced by $\iota_\alpha$ composed with $T^n\lra U(n)\lra SO(2n)$.
By construction,  the restriction $\rho_{\omega}|_{BT^\alpha}: BT^\alpha\lra BSO(2n)$ is
homotopic to map induced by a coordinate wise inclusion
$T^\alpha\lra T^n$ followed by complex conjugation on some coordinates and composed with
$T^n\lra SO(2n)$. We can apply again Theorem \ref{facts}. This shows that
$\iota_\alpha\colon T^\alpha \lra T^n$ is of the same form, namely coordinate wise
inclusion followed by complex conjugation on some coordinates.
If $\beta\subset \alpha$, the composition
$\iota_\alpha |_{T^\beta}\colon T^\beta\lra T^\alpha\larrow{\iota_\alpha} T^n$ and the homomorphism
$\iota_\beta\colon T^\beta\lra T^n$ differ only by an element of the Weyl group of $U(n)$, that is by a
permutation of the coordinates. Such an operation has no effect on complex conjugations.
This allows to define
a map $f\colon [m] \lra \{\pm 1\}$ by $f(i)=-1$ if the homomorphism
$\iota_{\{i\}}\colon T^{\{i\}} \lra T^n$ involves complex conjugation and $f(i)=1$ otherwise.

By construction of the function $f$, we have $\xi_f|_{BT^\alpha}\simeq \eta|_{BT^\alpha}$ for
all $\alpha\in K$ (Theorem \ref{facts}).
In particular, the maps $\eta$ and $\xi_f$ induce the same map in rational cohomology and are therefore homotopic
(Part (i)).
\end{proof}

\begin{rem} \label{stableclass}
Proposition \ref{classcxstructure} and Theorem \ref{classification} also have stable versions.
The same arguments as above work. Suppose that $t\geq 1$.

\smallskip

\noindent
(i)
For two function $f,g\colon [m]\lra \{\pm 1\}$, the bundles $\xi_f\oplus \C^t$ and
$\xi_g\oplus \C^t$ are isomorphic if and only if $f=g$,
but the realifications $(\xi_f)_\R\oplus \R^{2t}$ and
$(\xi_g)_\R\oplus \R^{2t}$ are always isomorphic.

\smallskip

\noindent
(ii)
Let $\eta\colon DJ(K)\lra BU(n+t)$ be an $(n+t)$-dimensional complex vector bundle.
Then, $\eta$ and $\xi_f\oplus \C^t$ are homotopic if and only if both induce the
same map in rational cohomology. And if $p(\eta_\R)=p(K)$, then there exists a function
$f\colon [m]\lra \{\pm 1\}$ such that $\eta\cong \xi_f\oplus \C^t$.
\end{rem}

Now we are in the position to  prove Theorem \ref{thm4} and Corollary \ref{cor7}, which we recall both
for the convenience of the reader.

\medskip

\noindent {\bf Theorem \ref{thm4}.}
\emph{
Let $K$ be an $(n-1)$-dimensional simplicial complex.
Let $\rho\colon DJ(K) \lra BSO(2n)$ be an oriented real vector bundle such that $p(\rho)=p(K)$.
Then, the vector bundle $\rho$ admits a complex structure
if and only if there exists a function $f\colon [m] \lra \{\pm 1\}$ such that
$e(\rho)=\pm e_f(K)$.
}

\medskip

\noindent
{\bf Corollary \ref{cor7}.}
\emph{
Let $K$ be an $n-1$-dimensional abstract finite simplicial complex and let $s\geq n$. Let $\rho$ be an
$2s$-dimensional
oriented real vector bundle over $DJ(K)$ such that $p(\rho)=p(K)$.
\nz
(i) If $s=n$, the number of isomorphism classes of
complex structures for $\rho$ equals the number of functions $f\colon [m]\lra \{\pm1\}$ such that
$e_f(K)=\pm e(\rho)$. In particular, there exists only a
finite number of isomorphism classes of complex structures for $\rho$.
\nz
(ii) If $s>n$ then there exist exactly $2^m$ non isomorphic complex structures for $\rho$.
}

\medskip

\emph{Proof of Theorem \ref{thm4}}
An even dimensional orientable real vector bundle $\rho$ over $DJ(K)$ admits two orientations.
And switching the orientation means to multiply the Euler class by $-1$. We denote the bundle with the opposite
orientation by $\quer \rho$. In particular, $\rho$ and $\quer \rho$ are isomorphic as non-oriented real vector
bundles.

If the Euler class $e(\rho)$ of $\rho$ has the required form then either $\rho$ or  $\quer \rho$ and $(\xi_f)_\R$
have the same Euler class and the same
Pontrjagin classes. By Theorem \ref{globaluniqueness}, this shows that either $\rho\simeq (\xi_f)_\R$
or that $\quer\rho\simeq (\xi_f)_\R$. In particular,
$\rho$ admits a complex structure.

If $\rho$ admits a complex structure, then there exists a complex vector bundle $\eta\colon DJ(K)\lra BU(n)$
such that $\rho$ and $\eta_\R$ are isomorphic as non-oriented bundles.
In particular, $p(\eta_\R)=p(\rho)=\prod_i(1-v_i^2)$.
By Theorem \ref{classification},
there exists a function $f\colon [m]\lra \{\pm 1\}$ such that $\eta\simeq \xi_f$. And this implies
that $e(\rho)=\pm e((\xi_f)_\R)=\pm e_f(K)$.
\qed

\medskip

\emph{Proof of Corollary \ref{cor7}.}
First we assume that $s=n$.
If $\eta$ is a complex structure for $\rho$, then, by Theorem \ref{classification}, there exists a function
$f\colon [m]\lra \{\pm 1\}$ such that $\eta\cong\xi_f$. Moreover, we have $e_f(K)=e(\eta_\R)=\pm e(\rho)$.
And two functions $f,g\colon [m]\lra \{\pm 1\}$ induce isomorphic vector bundles $\xi_f$ and $\xi_g$
if and only if $f=g$ (Proposition \ref{classcxstructure}). This proves the first part.

If $s>n$, then, for every function $f\colon [m]\lra \{\pm 1\}$ the two bundles
$(\xi_f)_\R\oplus\R^{2(s-n)}$ and $\rho$ have the same Pontrjagin classes and are
isomorphic (Theorem \ref{thm2}). Moreover, Remark \ref{stableclass} tells us that every
complex structure of $\rho$ is of the form $\xi_f\oplus\C^{s-n}$ and that
$\xi_f\oplus \C^{s-n} \cong \xi_g\oplus \C^{s-n}$ if and only iif $f=g$. This completes
the proof of the second part.
\qed

\medskip

\begin{exas} \label{cxstructurexample}
We illustrate our results with some examples.
If $K\letbe\partial\Delta[3]$ is a triangle, we have three vertices, three maximal faces given by
$\letbe \{1,2\}$\ $\{2,3\}$ and $\{1,3\}$, and eight choices for the Euler class $e_\omega(K)$, namely
$\pm v_1v_2\pm v_2v_3\pm v_1v_3$.
If the number of negative signs is even, then there exists a function $f\colon [3]\lra \{\pm 1\}$ such that
$e_\omega(K)=\omega_f(\{1,2\})v_1v_2 + \omega_f(\{2,3\})v_2v_3 + \omega_f(\{1,3\})v_1v_3=c_3(\xi_f)$. In this case,
$\rho_{\omega}\cong (\xi_f)_\R$ as oriented bundles. Since $\dim K$ is odd, faces of maximal order contain
an even number of vertices and
$e_f(K)=e_{-f}(K)$. In particular, there exist two different complex structures for $\rho_\omega$ in this case.
If the number of signs in $e_\omega(K)$ is odd, such a function does not exists and $\rho_{\omega}$ is not
isomorphic to the realification of a complex bundle
as oriented bundle. Since the number of maximal faces is odd, one of the two orientations on $\rho$ satisfies the condition on the signs and every
bundle $\rho_{\omega}$ always admits exactly two non isomorphic complex structure.

If $K$ is a square, we have four vertices and
four maximal faces given by $\{1,2\}$, $\{2,3\}$, $\{3,4\}$ and $\{1,4\}$.
Every function $f\colon [4]\lra \{\pm 1\}$ produces an Euler class $\pm v_1v_2\pm v_2v_3\pm v_3v_4\pm v_1v_4$
with an even number of negative signs. Again, $f$ and $-f$ produce the same Euler
class. Since the number of maximal faces is even
the Euler classes for the two orientations on $\rho_\omega$
involve both either an even or an odd number of negative signs.
We have 16 choices for the Euler class, and eight of them admit exactly two non isomorphic complex structures and
eight of them do not admit a complex structure.

If $K=\partial\Delta[4]$, then we have 4 maximal faces, 4 vertices, 16 possible Euler classes
$\pm v_1v_2v_3\pm v_1v_2v_4\pm v_1v_3v_4\pm v_2v_3v_4$ and 16 non isomorphic bundles
$\xi_f$. A easy calculation shows that, in this case, the map $f\mapsto \omega_f$ is injective and surjective.
Hence, every $\omega\colon M_K\lra \{\pm 1\}$ is of the form $\omega_f$ and every
$\rho$ admits exactly two complex structures.

Now we assume that $K$ is a complex of dimension $n-1$ and
admits an $n$-coloring $g\colon K\lra \Delta[n]$. The composition $fg$ of a function
$f \colon [n]\lra \{\pm 1\}$ and the restriction $g|_{[m]}$ of $g$ to the set of vertices $[m]$
produces
the Euler class $e_\omega=\epsilon \sum_{\mu} v_\mu$
where $\epsilon\letbe \prod_{i\in [n]} f(i)$.
This shows that in this case, the bundle $\xi_\R$ admits at least  $2^n$ non-isomorphic
complex structures.
\end{exas}

\begin{rem} \label{cxstrucqtm}
For a simple polytope $P$ we denote by $K_P$ the dual of the boundary of $P$. Since $P$ is simple, $K_P$
is a simplicial
polytope, in particular a simplicial complex, homeomorphic to $S^{n-1}$. An orientation for $P$ or $K_P$
inherits an orientation to every maximal face $\mu \in K_P$, and hence, an orientation on the set $\mu$.
For a matrix $\Lambda\in \Z^{n\times m}$ and a maximal face $\mu\in K_P$
we denote by $\Lambda_\mu$ the maximal minor given by
the columns of $\Lambda$ associated to the vertices in $\mu$, but we order the columns according
to the orientation of $\mu$. That is the order of the columns is only fixed up to even permutations,
but such a permutation will not change the determinant.

 A dicharacteristic pair $(P,\Lambda)$ consists of an oriented simple polytope $P$
 of dimension $n$ with $m$ facets
and a matrix $\Lambda\in \Z^{n\times m}$ such that $\det A_\mu=\pm 1$ for all maximal faces of $K_P$.
Up to diffeomorphism, every oriented quasi toric manifold can be constructed from a
dicharacteristic pair $(P,\Lambda)$ (e.g. see \cite{bupa}). The Euler class of $(\tau_M)_{hT^n}$ is then given by
$e((\tau_M){hT^n})=\sum_{\mu\in M_{K_P}} \det \Lambda_\mu v_\mu$ \cite{dono}. In particular,
the real vector bundle $(\tau_M)_{hT^n}$ admits a complex structure if and only if their exists a function
$f\colon [m]\lra \{\pm 1\}$ and $\epsilon=\pm 1$ such that $\det A_\mu=\epsilon f(\mu)$ for
all maximal faces $\mu \in K_P$. This is exactly the condition sufficient and necessary for the existence of
an almost complex structure for $M^{2n}$ (e.g. see \cite[Corollary 5.54]{bupa} and the following remarks).
\end{rem}

If we stabilize the bundle $(\tau_M)_{hT^n}$, the picture will change.
Since $(\tau_M)_{hT^n}\oplus \R^2\cong \xi_\R\oplus \R^2$ (Theorem \ref{thm2}), Corollary \ref{cor7}
gives a complete list of the isomorphism classes of complex structures for $(\tau_M)_{hT^n}\oplus \R^{2s}$ for $s\geq 1$.

%
%
%
%
%
%
%
%
%


%
%
\end{document}